\newtheorem{theorem}{Theorem}[section]
\newtheorem{lemma}[theorem]{Lemma}
\newtheorem{corollary}[theorem]{Corollary}
\newtheorem*{GRH}{Generalized Riemann Hypothesis}
\newtheorem*{CorBT}{Corollary \ref{BTforprimes}}
\newtheorem*{LemFI}{Theorem \ref{FI:2.6}}
\newtheorem*{ThSh1}{Theorem \ref{Shp:T1}}
\newtheorem*{ThSh2}{Theorem \ref{Shp:T2}}
\begin{document}
\title{Prime Distribution and Siegel Zeroes}
\author[T. Wright]{Thomas Wright}
\address{Wofford College\\429 N. Church St.\\Spartanburg, SC 29302\\USA}
\maketitle

\begin{abstract}
Let $\chi$ be a Dirichlet character mod $D$ with $L(s,\chi)$ its associated $L$-function, and let $\psi(x,q,a)$ be, as usual, Chebyshev's prime-counting function for the primes of the arithmetic progression $a$ (mod $q$) with $(a,q)=1$.  For a fixed $R>7$, we prove that under the assumption of an exceptional character $\chi$ with $L(1,\chi)<(\log D)^{-R}$, there exists a range of $x$ for which the asymptotic
$$\psi(x,q,a)=\frac{\psi(x)}{\phi(q)}\left(1-\chi\left(\frac{aD}{(q,D)}\right)+o(1)\right)$$
holds for $q<x^{\frac{30}{59}-\varepsilon}$.  We also show slightly better bounds for $q$ if we take an average over a range of $q$, finding an Elliott-Halberstam-type result for $q\sim Q$ on the range $Q<x^{\frac{16}{31}-\varepsilon}$.
This improves on a Friedlander and Iwaniec 2003 result that requires $q<x^{\frac{233}{462}}$ and $R\geq 554,401^{554,401}$.

\end{abstract}

\section{Introduction}

In 1837, Peter Gustav Lejeune Dirichlet proved the prime number theorem in arithmetic progressions by introducing (in somewhat different notation) his eponymous $L$-function:
$$L(s,\chi)=\sum_{n=1}^\infty \frac{\chi(n)}{n^s}.$$
Here, $\chi$ is a Dirichlet character modulo an integer $q>2$.  We will assume that $\chi$ is non-principal, and hence the above sum is convergent for $Re(s)>0$.

Dirichlet's studies raised the question of when this function $L(s,\chi)$ equals zero.  In particular, the zero-free region around $s=1$ led to Dirichlet's theorem on the asymptotic density of prime numbers in arithmetic progressions, while larger zero-free regions would allow for better error terms for this theorem.  Indeed, one of the most famous conjectures in mathematics is the belief that all of these zeroes are, in fact, on the half-line:

\begin{GRH}
For a Dirichlet character $\chi$, let $L(s,\chi)=0$ for $s=\sigma+it$ with $\sigma>0$.  Then $\sigma=\frac 12$.
\end{GRH}

Of course, we are nowhere close to proving this.  In the case where the zero is real, the best effective bound comes from Landau's 1918 paper \cite{La}:
\begin{theorem}[Landau, 1918]
There exists an effectively computable positive constant $C$ such that for any $q$ and any character $\chi$ mod $q$, if $L(s,\chi)=0$ and $s$ is real, then
$$s<1-\frac{C}{q^\frac 12 \log^2 q}.$$
\end{theorem}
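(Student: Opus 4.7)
The plan is to reduce the theorem to the case of real primitive characters, and then derive the bound from an effective lower bound for $L(1,\chi)$ together with a mean value argument.

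First, observe that for non-real characters $\chi$, the classical de la Vall\'ee Poussin zero-free region (obtained from the trigonometric identity $3+4\cos\theta+\cos 2\theta\geq 0$ applied to $\log\zeta(s)L(s,\chi)^4 L(s,\chi^2)$) already yields a region of the form $\sigma>1-C/\log q$, which is far stronger than the bound being claimed. So I may assume that $\chi$ is a real primitive character modulo $q$ and that $L(\beta,\chi)=0$ for some real $\beta\in(0,1)$.

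Second, because $\chi$ is real, $L(s,\chi)$ takes real values on $(0,1)$, so I apply the mean value theorem on the interval $[\beta,1]$ to obtain
$$L(1,\chi)=L(1,\chi)-L(\beta,\chi)=(1-\beta)L'(\xi,\chi)$$
for some $\xi\in(\beta,1)$. The plan is then to bound $L(1,\chi)$ from below and $|L'(\xi,\chi)|$ from above.

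Third, for $|L'(\xi,\chi)|$ with $\xi$ close to $1$, truncating the Dirichlet series for $L'$ and applying partial summation together with the P\'olya--Vinogradov bound $\sum_{n\leq x}\chi(n)\ll\sqrt{q}\log q$ (splitting the range at $x=q$ and using the trivial bound below) yields the uniform estimate $|L'(\xi,\chi)|\ll\log^2 q$. For the lower bound, I would appeal to the Dirichlet class number formula, which expresses $L(1,\chi)$ for a real primitive character in terms of the class number $h$ (and, in the real quadratic case, the regulator) of the associated quadratic field, divided by $\sqrt{q}$. Since $h\geq 1$ and the regulator is bounded below by an absolute positive constant, this gives $L(1,\chi)\geq c/\sqrt{q}$ for an effectively computable $c>0$. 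Combining these estimates yields $c/\sqrt{q}\leq (1-\beta)\,O(\log^2 q)$, hence $1-\beta\geq C/(\sqrt{q}\log^2 q)$.

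The main obstacle is precisely the effective lower bound for $L(1,\chi)$: the factor $\sqrt{q}$ in the statement of the theorem is exactly the price paid for working effectively with a single real character. Siegel's theorem improves this bound to $L(1,\chi)\gg_\varepsilon q^{-\varepsilon}$, but only ineffectively, and any effective improvement here would translate directly, via the argument above, into a sharper zero-free region; no such improvement is currently known, which is what motivates the study of Siegel zeroes in the rest of the paper.
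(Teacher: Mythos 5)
Your sketch is correct and is essentially the classical argument for Landau's bound; the paper itself only cites this result from Landau's 1918 paper and does not reprove it, so there is no in-paper proof to compare against. A few small points worth tightening if you were to write this out fully: you should note explicitly that the imprimitive case reduces to the primitive one because $L(s,\chi)$ and $L(s,\chi^*)$ (for the inducing primitive character $\chi^*$ of conductor $q^*\mid q$) differ only by finitely many nonvanishing Euler factors in the half-plane $\sigma>0$, and that $\sqrt{q^*}\log^2 q^*\leq\sqrt{q}\log^2 q$ so the bound transfers. For the $|L'(\xi,\chi)|\ll\log^2 q$ estimate via P\'olya--Vinogradov you should first observe that one may assume $1-\beta<1/\log q$ (otherwise the theorem is trivial with a suitable constant), which guarantees $\xi>1-1/\log q$ and makes the truncation-plus-partial-summation argument uniform. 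The effective lower bound $L(1,\chi)\gg q^{-1/2}$ via the class number formula is correct: in the imaginary quadratic case $h\geq1$ and $w\leq6$ give $L(1,\chi)\geq\pi/(3\sqrt{q})$, and in the real quadratic case the fundamental unit satisfies $\varepsilon_d\gg\sqrt{d}$, so the regulator is in fact $\gg\log q$, giving the even stronger $L(1,\chi)\gg(\log q)/\sqrt{q}$. One could alternatively obtain $L(1,\chi)\gg q^{-1/2}$ by the positivity argument using $(1*\chi)(n)\geq0$ and $(1*\chi)(n^2)\geq1$ (as in Davenport), avoiding the class number formula entirely; that route is morally equivalent but more elementary. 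Your closing remark correctly identifies the $\sqrt{q}$ as the bottleneck that Siegel's ineffective theorem circumvents, which is indeed the thematic point of the paper.
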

The best ineffective bound was proven by Siegel in 1935 \cite{Si}, who was able to improve the denominator in Landau's result to any $q^\varepsilon$ but at the cost of no longer being able to compute the associated constant:
\begin{theorem}[Siegel, 1935]
For any $\varepsilon>0$ there exists a positive constant $C(\varepsilon)$ such that if $L(s,\chi)=0$ and $s$ is real then
$$s<1-C(\varepsilon)q^{-\varepsilon}.$$
\end{theorem}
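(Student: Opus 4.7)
The plan is to carry out the classical two-character argument of Siegel, which produces a polynomial bound in $q$ at the cost of being structurally ineffective. I may assume $\chi$ is real primitive throughout, since any zero of an imprimitive $L$-function is already a zero of the primitive character inducing it.

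The proof splits by a dichotomy on the existence of an ``exceptional'' small $L(1, \chi)$. Fix $\epsilon>0$. \textbf{Case B:} every real primitive character $\chi_1$ mod $q_1$ satisfies $L(1,\chi_1)\geq q_1^{-\epsilon/2}$. Then for the given $\chi$ mod $q$ with real zero $\beta$, the mean value theorem combined with the standard bound $L'(\sigma,\chi)\ll \log^2 q$ for $\sigma$ near $1$ yields $L(1,\chi)\ll (1-\beta)\log^2 q$; paired with the Case B hypothesis this gives $1-\beta \gg q^{-\epsilon}$ with an \emph{effective} implied constant, and we are done.

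\textbf{Case A:} there exists a real primitive $\chi_1$ mod $q_1$ with $L(1,\chi_1)<q_1^{-\epsilon/2}$. This $\chi_1$ is the source of the ineffectivity, since we cannot locate it constructively. Fix such a $\chi_1$; for any other real primitive $\chi_2$ mod $q_2$, form
$$F(s)=\zeta(s)\,L(s,\chi_1)\,L(s,\chi_2)\,L(s,\chi_1\chi_2).$$
The crucial input is that the Dirichlet coefficients of $F$ are non-negative with $a_1=1$, checked on Euler factors using that $\{1,\chi_1,\chi_2,\chi_1\chi_2\}$ is a group. $F$ has a simple pole at $s=1$ with positive residue $\lambda=L(1,\chi_1)L(1,\chi_2)L(1,\chi_1\chi_2)$, and if $L(\beta,\chi_2)=0$ for some real $\beta$ close to $1$, then $F(\beta)=0$. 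Combining positivity of coefficients with this vanishing --- via a smoothed Perron summation shifted past $s=1$ and $s=\beta$, with tails controlled by convexity bounds on each of the four $L$-factors --- yields, after optimizing the truncation parameter as a fixed power of $q_1q_2$, a lower bound
$$L(1,\chi_2)\geq c(\chi_1,\epsilon)\,q_2^{-\epsilon}.$$
Plugging back into $L(1,\chi_2)\ll (1-\beta)\log^2 q_2$ closes the argument. The degenerate case $\chi_2=\chi_1$ is handled by Landau's effective bound, which for any fixed $q_1$ is sharper than $q_1^{-\epsilon}$.

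The main obstacle is conceptual rather than technical. The constant $c(\chi_1,\epsilon)$ produced in Case A depends on $\chi_1$ --- in particular on $q_1$ and on how small $L(1,\chi_1)$ actually is --- and we do not know whether any such $\chi_1$ exists, let alone where to find it. Merging the two cases into a single statement therefore forces the final $C(\epsilon)$ to be ineffective; this is exactly the feature that distinguishes Siegel's theorem from Landau's. The technical verifications (non-negativity of the $a_n$, convexity bounds for the tails, and the optimization in the truncation parameter) are all standard and present no real difficulty once the dichotomy is in place.
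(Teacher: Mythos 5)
The paper does not prove Siegel's theorem at all; it is quoted as a classical background result in the introduction and then used as a comparison point, so there is no paper proof to measure you against. Your sketch follows the standard Siegel--Davenport template (the dichotomy on an ineffective exceptional character, the auxiliary function $F(s)=\zeta(s)L(s,\chi_1)L(s,\chi_2)L(s,\chi_1\chi_2)$ with non-negative coefficients, and a truncated Perron/positivity argument), and Case B and the non-negativity verification are fine.

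There is, however, a genuine gap in Case A: you anchor the Perron argument at a zero $\beta$ of $L(s,\chi_2)$, writing $F(\beta)=0$ and claiming this produces the unconditional lower bound $L(1,\chi_2)\geq c(\chi_1,\epsilon)q_2^{-\epsilon}$. It does not. Shifting the contour and using $F(\beta)=0$ gives $\lambda \gg (1-\beta)\,X^{\beta-1}$ with $X=(q_1q_2)^{O(1)}$, and since $\lambda=L(1,\chi_1)L(1,\chi_2)L(1,\chi_1\chi_2)$, the resulting lower bound on $L(1,\chi_2)$ carries the factor $(1-\beta)(q_1q_2)^{-O(1-\beta)}$, where $\beta$ is the very zero you are trying to bound. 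When you then feed in $L(1,\chi_2)\ll(1-\beta)\log^2 q_2$, the $(1-\beta)$ factors cancel and you are left with the constraint $(q_1q_2)^{O(1-\beta)}\gg q_1^{\epsilon/2}/\log^{O(1)}(q_1q_2)$; for $q_2$ large compared with $q_1$ the right side drops below $1$ and the inequality says nothing, so no bound of the form $1-\beta\gg q_2^{-\epsilon}$ follows. The fix is to apply the argument at a real zero $\beta_1$ of $L(s,\chi_1)$ instead: this exists in Case A by Hecke's lemma (if $L(1,\chi_1)$ is small then $L(s,\chi_1)$ vanishes near $1$), or more cleanly one states the dichotomy directly on the location of a zero of $L(s,\chi_1)$ rather than on the size of $L(1,\chi_1)$. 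Because $\chi_1$ and $\beta_1$ are fixed together, the factor $(1-\beta_1)(q_1q_2)^{-O(1-\beta_1)}$ is $\gg_{\chi_1}q_2^{-\epsilon/2}$ once $1-\beta_1$ is forced below $\epsilon/O(1)$, and this genuinely unconditional lower bound on $L(1,\chi_2)$ then closes the argument via the mean value step as you intended.
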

However, most zeroes are far closer to the half-line than these bounds indicate.  In fact, it is known (see \cite{Gr}, \cite{La}, \cite{Ti}) that for any $q$, every zero of $L(s,\chi)$ except at most one will obey a much smaller bound:
\begin{theorem}
There is an effectively computable positive constant $C$ such that $$\prod_{\chi\mbox{ }mod\mbox{ }q}L(s,\chi)=0$$
has at most one solution on the region $$\sigma\geq 1-\frac{C}{\log q(2+|t|)}.$$
If such a zero exists, $s$ must be real, and the character for which $L(s,\chi)=0$ must be a non-principal real character.
\end{theorem}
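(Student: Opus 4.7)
The plan is to combine two classical non-negativity arguments. The first, based on the trigonometric identity $3 + 4\cos\theta + \cos 2\theta = 2(1+\cos\theta)^2 \geq 0$, rules out all zeros in the claimed region except possibly real zeros of real non-principal characters; the second, based on $(1+\chi_1(n))(1+\chi_2(n)) \geq 0$, shows that at most one such exceptional zero can exist.

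For the first step, setting $\theta = \gamma\log n - \arg\chi(n)$ and weighting by $\Lambda(n)n^{-\sigma}$ in the Dirichlet series of $\log L(s,\chi)$ yields, for $\sigma>1$,
$$|\zeta(\sigma)|^3 \,|L(\sigma+i\gamma,\chi)|^4\,|L(\sigma+2i\gamma,\chi^2)| \geq 1.$$
Given a hypothetical zero $\rho = \beta+i\gamma$ of $L(s,\chi)$ in the region, I would substitute $|\zeta(\sigma)| \ll (\sigma-1)^{-1}$, the near-zero bound $|L(\sigma+i\gamma,\chi)| \ll (\sigma-\beta)\log q(|\gamma|+2)$, and the convexity bound $|L(\sigma+2i\gamma,\chi^2)| \ll \log q(|\gamma|+2)$ (valid whenever $\chi^2$ is non-principal, i.e.\ whenever $\chi$ is complex). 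Choosing $\sigma - 1$ comparable to $1-\beta$ forces $1-\beta \gg 1/\log q(|\gamma|+2)$, pushing any such zero out of the region. Equivalently one can recast this via the Hadamard formula as
$$3(-\zeta'/\zeta)(\sigma) + 4\,\mathrm{Re}(-L'/L)(\sigma+i\gamma,\chi) + \mathrm{Re}(-L'/L)(\sigma+2i\gamma,\chi^2) \geq 0,$$
which isolates the $-1/(\sigma-\beta)$ contribution cleanly.

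The remaining situation is $\chi$ real (so $\chi^2 = \chi_0$), with a zero $\rho = \beta + i\gamma$ close to 1; when $|\gamma|$ is small the pole of $L(s,\chi_0)$ at $s=1$ threatens the third bound. I would handle this by pairing $\rho$ with its conjugate $\bar\rho = \beta-i\gamma$, which is also a zero since $\chi$ is real, so that an additional term $-(\sigma-\beta)/((\sigma-\beta)^2+4\gamma^2)$ enters the $L'/L$-bound of the middle factor; a direct calculation shows this compensates for the pole, restoring $1-\beta \gg 1/\log q$ except when $\gamma = 0$. To rule out two distinct exceptional real zeros, suppose $\chi_1 \neq \chi_2$ are real non-principal characters with $L(\beta_i,\chi_i)=0$. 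Since $(1+\chi_1(n))(1+\chi_2(n)) \geq 0$, the coefficients of $\log[\zeta(s)L(s,\chi_1)L(s,\chi_2)L(s,\chi_1\chi_2)]$ are non-negative, so
$$\zeta(\sigma)\,L(\sigma,\chi_1)\,L(\sigma,\chi_2)\,L(\sigma,\chi_1\chi_2) \geq 1 \qquad (\sigma > 1).$$
Since $\chi_1\chi_2$ is non-principal, inserting the same size estimates and optimizing $\sigma$ near 1 produces $\max(1-\beta_1,1-\beta_2) \gg 1/\log q$, contradicting both zeros lying in the region. The principal character contributes nothing: $L(s,\chi_0)$ inherits the classical zero-free region of $\zeta$.

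The most delicate step I anticipate is the small-$\gamma$ case for real $\chi$: the pole contribution in the $3+4+1$ inequality must be precisely cancelled by the conjugate-zero contribution to conclude that $\gamma$ actually equals 0 rather than merely being small, and preserving an effective absolute $C$ through this cancellation requires careful bookkeeping of the Hadamard factorization remainders and the Borel--Carathéodory-type bounds used in the $(\sigma-\beta)\log q$ estimate near $\rho$.
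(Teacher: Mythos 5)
The paper does not prove this theorem; it states it as a classical fact with citations to Gronwall, Landau, and Titchmarsh, so there is no internal proof to compare against. Your sketch is essentially the standard de la Vall\'ee-Poussin/Landau argument (as in Davenport, \emph{Multiplicative Number Theory}, Ch.\ 14), and the overall architecture is sound: the $3+4\cos\theta+\cos 2\theta\geq 0$ inequality eliminates complex zeros near $\sigma=1$; pairing $\rho$ with $\bar\rho$ handles a real character with small nonzero $\gamma$; and the non-negativity of $(1+\chi_1(n))(1+\chi_2(n))$ shows two distinct real non-principal characters cannot both have a Siegel zero. You correctly flag the small-$\gamma$ case as the delicate one, and you are right that working with $\mathrm{Re}(-L'/L)$ via the Hadamard product is cleaner than trying to push $|L|$-bounds directly through the $3$-$4$-$1$ product inequality.

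There is, however, a genuine gap: your uniqueness argument is stated only for $\chi_1\neq\chi_2$, and you never address the possibility of two distinct real zeros, or a double real zero, of a \emph{single} real non-principal character $\chi$ in the region. That case is also needed for the ``at most one solution'' conclusion, and it is not covered by the conjugate-pairing step (which for a real $\rho$ pairs the zero with itself). The fix is the same identity with $\chi_1=\chi_2=\chi$: from $(1+\chi(n))^2\geq 0$ one gets, for $\sigma>1$,
$$-2\frac{\zeta'}{\zeta}(\sigma)-2\frac{L'}{L}(\sigma,\chi)+O(\log q)\geq 0,$$
and inserting \emph{both} zeros $\beta_1,\beta_2$ (or a zero of multiplicity two) into the Hadamard expansion of $-L'/L(\sigma,\chi)$ and choosing $\sigma-1$ proportional to $1-\min(\beta_1,\beta_2)$ forces $\max(1-\beta_1,1-\beta_2)\gg 1/\log q$. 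With that case added, the proof sketch is complete and effective; the $C$ produced is absolute since only the $\zeta$ pole, the Hadamard/Borel--Carath\'eodory constants, and the convexity bound enter.
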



A zero of this type, if it is to exist, is called a \textit{Siegel zero} or an \textit{exceptional zero}, and the associated character is called an exceptional character. We note that the definition given here (or, indeed, in the literature in general) for a Siegel zero is not particularly rigorous, since this definition depends on the choice of the constant $C$.

\section{Siegel Zeroes}

While the existence of Siegel zeroes would unfortunately disprove the Riemann hypothesis, these zeroes would nevertheless lead to some surprisingly nice properties among the primes.  Most notably, the existence of Siegel zeroes would allow us to prove (among other things) the twin prime conjecture \cite{HB83}, small gaps between general $m$-tuples of primes \cite{WrS}, the existence of large intervals where the Goldbach conjecture is true \cite{MaMe}, a hybrid Chowla and Hardy-Littlewood conjecture \cite{TT}, and results about primes in arithmetic progressions that would allow the modulus $q$ to be greater than $\sqrt x$ \cite{FI03}.  It is this last result that is of interest in the present paper.

In the definitions below, we will assume that $(a,q)=1$.  We recall that Chebyshev's functions are given by
\begin{gather*}
\psi(x)=\sum_{n\leq x}\Lambda(n),
\psi(x,q,a)=\sum_{\substack{n\leq x\\n\equiv a\pmod q}}\Lambda(n),
\end{gather*}
where $\Lambda$ is the von Mangoldt function given by
$$\Lambda(n)=\begin{cases} \log p & if\mbox{ }n=p^k\mbox{ }for\mbox{ }prime\mbox{ }p,\\ 0 & otherwise.\end{cases}$$
In 2003, Friedlander and Iwaniec \cite{FI03} proved the following:

\begin{theorem}[Friedlander-Iwaniec, 2003] \label{FIFI}
Let $\chi$ be a real character mod $D$.  Let $x>D^r$ with $r=554,401$, let $q=x^\theta$ with $\theta<\frac{233}{462}$, and let $(a,q)=1$.  Then
$$\psi(x,q,a)=\frac{\psi(x)}{\phi(q)}\left(1-\chi\left(\frac{aD}{(q,D)}\right)+O\left(L(1,\chi)(\log x)^{r^r}\right)\right).$$
\end{theorem}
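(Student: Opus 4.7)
My plan is to adapt the Friedlander--Iwaniec strategy of converting the smallness of $L(1,\chi)$ into cancellation for $\psi(x,q,a)$ at moduli $q$ well beyond the Bombieri--Vinogradov threshold $x^{1/2}$. I begin with the standard character decomposition
\[
\psi(x,q,a) \;=\; \frac{1}{\phi(q)}\sum_{\chi' \bmod q}\overline{\chi'}(a)\,\psi(x,\chi'),
\qquad \psi(x,\chi') \;=\; \sum_{n\le x}\Lambda(n)\chi'(n).
\]
The principal character contributes $\psi(x)/\phi(q)$, while the exceptional character $\chi$ induced from modulus $D$ contributes $-\chi(aD/(q,D))\psi(x)/\phi(q) + O(xL(1,\chi)\log^{C}x)$; this second contribution is precisely the mechanism producing the $1-\chi(aD/(q,D))$ main term, since under the Siegel-zero hypothesis the prime sum $\psi(x,\chi)$ is close to $-\psi(x)$. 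The theorem therefore reduces to bounding the remaining non-exceptional characters in aggregate by $L(1,\chi)\psi(x)(\log x)^{r^{r}}$.

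To treat $\psi(x,\chi')$ for $\chi'\ne\chi_{0},\chi$, I apply a Heath-Brown (or Vaughan) identity to $\Lambda$, truncating at $z=x^{1/J}$ for fixed large $J$, so that each $\psi(x,\chi')$ decomposes into a bounded number of Type I sums $\sum_{m\le M}\alpha_{m}\chi'(m)\sum_{n\le x/m}\chi'(n)$ and Type II sums $\sum_{m\sim M}\sum_{n\sim N}\alpha_{m}\beta_{n}\chi'(mn)$. Type I sums yield to Pólya--Vinogradov or Burgess completion throughout the range $q\le x^{1-\epsilon}$. The decisive difficulty is the Type II regime where $M,N\approx x^{1/2}$ and $q\approx x^{\theta}$ with $\theta>\tfrac{1}{2}$, where direct character-sum bounds fail.

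The crucial input here is the Deshouillers--Iwaniec estimate for averaged Kloosterman fractions, obtained from Kuznetsov's trace formula on $\mathrm{SL}_{2}(\mathbb{Z})$, which provides cancellation in the $q$-aspect precisely when the bilinear weights are \emph{well-factorable}. I arrange this structure using Linnik's dispersion method combined with the linear sieve applied to the Type II sums, unwinding the congruence $mn\equiv a\pmod{q}$ into Kloosterman fractions $e(a\overline{n}/q)$. The $L(1,\chi)$-sized savings then appear because, for every $\chi'\ne\chi_{0},\chi$, the $L$-function $L(s,\chi\chi')$ has no Siegel zero by uniqueness, and zero-density estimates yield the required factor once the sieve identities are carefully tracked.

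The main obstacle is the compatibility of the combinatorial decomposition with the well-factorability constraint of the Deshouillers--Iwaniec bounds: the ranges of $M$ and $N$ permitted by Kloosterman cancellation are delicate functions of $q$, and optimizing this trade-off is exactly what forces the specific exponent $\theta<233/462$. Secondarily, the enormous constant $r^{r}=554{,}401^{554{,}401}$ is an artifact of iterated sieve bookkeeping: each application of the linear sieve and of Heath-Brown's identity costs powers of $\log x$ that compound through the decomposition, so the hypothesis $L(1,\chi)<\log^{-R}D$ must hold with very large $R$ to absorb every accumulated factor.
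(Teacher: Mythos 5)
This statement is cited from \cite{FI03}; the present paper does not reprove it, but it does lay out in Sections 4--6 exactly how the Friedlander--Iwaniec argument is organised, and your proposal departs from it in ways that would not actually close the proof.

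The real engine of \cite{FI03} is \emph{not} character orthogonality. Friedlander and Iwaniec work directly with the von Mangoldt function and the exceptional character via the convolution identity $\Lambda=\nu\ast\lambda'$ with $\nu=\mu\ast(\mu\chi)$, $\lambda'=\chi\ast\log=\lambda\ast\Lambda$, and $\lambda=\chi\ast 1$. They split $\psi(x,q,a)=\psi^*+\psi_*$ according to whether the $\nu$-variable is below or above $D^*=D^2$. The block $\psi^*$, handled by their Proposition 4.3 (Theorem \ref{FImainterm} here), produces the main term $1-\chi(aD/(q,D))$ \emph{without} ever summing over characters mod $q$. The remainder $\psi_*$ is bounded using $|\nu|\le\lambda$ and the elementary partial-sum facts $\sum_{d\le y}\lambda(d)=yL(1,\chi)+O(\sqrt{Dy})$ and $\sum_{D^*<d\le y}\lambda(d)/d\ll L(1,\chi)\log y$: \emph{this} is where the $L(1,\chi)$ factor comes from, not from any zero-density input. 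The level of distribution $\theta<233/462$ is forced by the ternary divisor estimate of \cite{FI85} (in its $\lambda$-twisted form, Theorem \ref{FIternary2}), after sieve-theoretic manipulations reduce the quaternary convolution $\chi\ast1\ast1\ast1$ down to a ternary one.

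Your proposal has two structural problems. First, once you invoke orthogonality to write $\psi(x,q,a)=\frac1{\phi(q)}\sum_{\chi'}\overline{\chi'}(a)\psi(x,\chi')$, there is no longer any congruence $mn\equiv a\pmod q$ to ``unwind into Kloosterman fractions $e(a\overline n/q)$''---the congruence has been absorbed into the characters, so the dispersion/well-factorability machinery you describe has no input. You are grafting a Bombieri--Friedlander--Iwaniec flexible-moduli argument onto a character-sum framework, but those two approaches are not compatible as written. Second, and more importantly, the source of the $L(1,\chi)$ gain is wrong. Zero-density estimates (even with the Deuring--Heilbronn repulsion from the Siegel zero) deliver power-of-$\log$ savings, and cannot by themselves produce a multiplicative factor $L(1,\chi)$ in front of $x/\phi(q)$; you would never recover a bound proportional to $L(1,\chi)(\log x)^{r^r}$ that way, especially with $q$ larger than $x^{1/2}$, where the number of nonexceptional characters alone overwhelms any pointwise zero-density saving. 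The correct mechanism is the elementary $\lambda$-partial-sum bound above, threaded through a decomposition of $\Lambda$ built from the exceptional character itself rather than from a generic combinatorial identity like Heath-Brown's or Vaughan's.
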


Notably, this allows for moduli $q$ that are larger than $x^{\frac 12}$.  In fact, their work actually proves this theorem for the slightly larger region of $$\theta<\frac{58\left(1-\frac 1r\right)}{115}$$
for some very large $r$.  The authors rounded off to an exponent of $\frac{233}{462}$ for the purpose of writing their result more simply.

While this theorem allows for better-than-GRH distribution, it requires a rather extreme Siegel zero.  Indeed, this theorem is only non-trivial if
$$L(1,\chi)=o\left(\left(\log D\right)^{-{554,401}^{554,401}}\right),$$
or when there exists a Siegel zero $\beta$ with
$$1-\beta=o\left(\left(\log D\right)^{-{554,401}^{554,401}-2}\right).$$
since $1-\beta\ll L(1,\chi)\log^2 D$ (see e.g. (4.1) of \cite{FI18}).

However, a recent preprint by Zhang \cite{ZhS} has raised the possibility that such a zero does not exist.  In fact, Zhang's paper posits that there is no $L$-function with
$$L(1,\chi)<\frac{c}{\log^{2022} D}$$
for some explicitly computable constant $c$.  Such a bound, of course, would obsolete Theorem \ref{FIFI}.

\section{Main Theorems}
As such, the aim of the present paper is twofold: we relax the requirements on the Siegel zero in the Friedlander-Iwaniec result (so that the result would not be made obsolete by Zhang's potential breakthrough), and we increase the allowable size for $q$.  Towards these aims, we achieve the following result:
\begin{theorem}\label{MainTheoremi}
Let $\chi$ be a primitive real character mod $D$, and let $x$ be such that $D<x^\eta$ with $0<\eta<\frac{1}{50,000}$.  Moreover, let $q$ be an integer such that $q\leq x^{\theta}$ for $\theta<\frac{30}{59}-\varepsilon'$ with $\varepsilon'>62\eta$, and let $a$ be an integer for which $(a,q)=1$.  Then
$$\psi(x,q,a)=\frac{\psi(x)}{\phi(q)}\left(1-\chi\left(\frac{aD}{(q,D)}\right)+O\left(L(1,\chi)\log^7 x\right)\right).$$
This is non-trivial if $L(1,\chi)\log^{7} x=o(1)$.
\end{theorem}
A new idea (in the context of Siegel zeroes) in this paper is that one can improve the level of distribution if one is willing to settle for an Elliott-Halberstam-type result.  As is standard, we let $n\sim x$ denote $x\leq n\leq 2x$.
\begin{theorem}\label{MainTheoremii}
Let $\chi$, $D$, $x$, and $\eta$, be as in Theorem \ref{MainTheoremi}, and let $Q=x^\theta$ with $\theta<\frac{16}{31}-\varepsilon'$ for some $\varepsilon'>62\eta$.
Then
$$\sum_{q\sim Q}\max_{(a,q)=1}\left|\psi(x,q,a)-\left(1-\chi\left(\frac{aD}{(q,D)}\right)\right)\frac{\psi(x)}{\phi(q)}\right|\ll xL(1,\chi)\log^{7} x.$$
\end{theorem}
Notably, since
$$\sum_{\substack{q\sim Q\\ D|q}}\frac{\psi(x)}{\phi(q)}\ll \frac x{\phi(D)},$$
we can rewrite Theorem \ref{MainTheoremii} as a conditional improvement on the Elliott-Halberstam conjecture.
\begin{corollary} Let $\chi$, $D$, $x$, $Q$, and $\varepsilon'$ be as in the previous theorem.  Then
$$\sum_{q\sim Q}\max_{(a,q)=1}\left|\psi(x,q,a)-\frac{\psi(x)}{\phi(q)}\right|\ll xL(1,\chi)\log^{7} x.$$
\end{corollary}

Throughout this paper, we will generally assume that any $\varepsilon<\frac{1}{600}$, as this will be sufficiently small for our purposes.

\section{Ideas for the Paper: Notation}

Let $\chi$ be an exceptional character of conductor $D$, and let $\ast$ denote the Dirichlet convolution.  Moreover, let $\mu$ denote the M\"{o}bius function, and recall that $$\Lambda(n)=(\mu\ast\log)(n),$$ where $\Lambda$ is the von Mangoldt function defined earlier.

Traditionally, questions about primes have tended to focus on the von Mangoldt function.  In \cite{FI03}, the authors' idea was that one can rewrite $\Lambda$ with $$\Lambda=\mu\ast\log\ast \chi\ast \chi\mu,$$ since $(\chi\ast \chi\mu)(n)$ is 1 if $n=1$ and zero otherwise.  Regrouping these terms, one has that
\begin{gather}\label{nulambda}
\Lambda=(\log \ast \chi)\ast(\mu\ast \chi\mu).
\end{gather}
The $\log\ast\chi$ term can be evaluated using standard $L$-function, contour integration techniques, and Weil's bound for Kloosterman sums.  Meanwhile, for the $\mu\ast \chi\mu$ term, one can see that $$|(\mu\ast \chi\mu)(n)|\leq (1\ast\chi)(n),$$and this, too, is easier to evaluate than the von Mangoldt function.

More specifically, define
\begin{gather*}
\lambda(n)=(\chi\ast 1)(n),\\
\lambda'(n)=(\chi\ast \log )(n),\end{gather*} and $$\nu(n)=(\mu\ast (\mu \chi))(n).$$
Importantly, under the assumption of a Siegel zero, sums over $\lambda(n)$ are small.  In particular, if $x>D^2$, then
\begin{gather}\label{lambdaid1}
\sum_{d\leq x}\lambda(d)=xL(1,\chi)+O\left(\sqrt{Dx}\right),
\end{gather}
and
\begin{gather}\label{lambdaid2}
\sum_{D^2<d\leq x}\frac{\lambda(d)}{d}\ll L(1,\chi)\log x,
\end{gather}
which are Lemma 5.1 and equation (5.9) of \cite{FI03}, respectively.  For results on Siegel zeroes, these identities are a key point of leverage, as the assumption that $L(1,\chi)$ is small allows one to extract savings from these two bounds.

The relationship between $\lambda'$ and $\Lambda$ can be given by
\begin{gather}\label{lambdabk}
\lambda'=\chi\ast \log =\chi\ast 1\ast \mu \ast \log =\lambda\ast \Lambda,
\end{gather}
and
\begin{gather}\label{Lambdabk}
\Lambda=\mu\ast \log =\chi\ast \chi\mu \ast \mu\ast \log =\nu\ast \lambda'.
\end{gather}
\section{Ideas for the Paper: the Friedlander-Iwaniec Framework}
This last identity can be used to re-express Chebyshev's function:
$$\psi(x,q,a)=\mathop{\sum\sum}\limits_{\substack{d,m\\dm\leq x\\ dm\equiv a\pmod q}}\nu(d)\lambda'(m).$$
Friedlander and Iwaniec split this double sum into two parts: the part where $d$ is small, and the rest.  In the former case, $\lambda'(m)$ can be evaluated directly with $\nu(d)$ having little impact, and it is this sum over $\lambda'(m)$ that gives the main term in their theorem.

In the case where $d$ is not small, the authors use the fact that we mentioned in our discussion of (\ref{nulambda}), namely that
\begin{gather}\label{lambdanubound}
|\nu(d)|\leq \lambda(d).
\end{gather}
This allows them to write
$$|(\nu\ast \lambda')(n)|\leq (\lambda\ast \lambda')(n)\leq \log(n)(\lambda\ast 1\ast 1)(n).$$
Unfortunately, there are few results that can help with an expression such as the one on the right, since this is a quaternary divisor function $\chi\ast 1\ast 1\ast 1$, and the only divisor functions where $q$ can be taken larger than $\sqrt x$ are binary ones like $1\ast 1$ or ternary ones like $1\ast 1\ast 1$.  To combat this, the authors use an inequality of Landreau \cite{Land} that essentially simplifies the expression to $\lambda\ast 1$.  This simplified expression is much more pliable to ternary sum methods, but this technique comes at the cost of a significantly worse bound and thus only helps under the assumption of a strong Siegel zero.

Here, we handle the sum when $d$ is not small a bit more delicately.  Below, let $\tau_k$ denote the $k$-fold divisor function, and let $\tau=\tau_2$. For some small $\alpha>0$, define $D^*=D^{2+\alpha}$.  By (\ref{lambdabk}), (\ref{lambdanubound}), and the non-negativity of $\lambda$, we can write
\begin{gather}\label{nutolam} \mathop{\sum\sum}\limits_{\substack{m,d \\ d>D^* \\dm\equiv a\pmod q}}\nu(d)\lambda'(m)=\mathop{\sum\sum\sum}\limits_{\substack{ m_1,m_2 \\ d>D^* \\ dm_1m_2\equiv a\pmod q}}\nu(d)\Lambda(m_1)\lambda(m_2)\end{gather}
and
\begin{gather}\label{nutolam2}
\left|\mathop{\sum\sum\sum}\limits_{\substack{ m_1,m_2 \\ d>D^* \\ dm_1m_2\equiv a\pmod q}}\nu(d)\Lambda(m_1)\lambda(m_2)\right|\leq \log x\mathop{\sum\sum\sum}\limits_{\substack{ m_1,m_2 \\ d>D^* \\ dm_1m_2\equiv a\pmod q}}\lambda(d)\lambda(m_2).
\end{gather}
If $dm_1$ is larger than $q^{1+\varepsilon}$ and $m_2>D^*$, we can let $n=dm_1$.  Since $\lambda\ast 1\leq \tau_3$, we can simply apply the congruence condition and Shiu's Brun-Titchmarsh theorem \cite{Sh} to $n$ and still use (\ref{lambdaid2}) on $m_2$ to find our savings.  (Shiu's theorem gives a useful bound for $\tau$ and $\tau_3$ in arithmetic progressions - we discuss this below.)  If $m_1m_2\geq q^{1+\varepsilon}$ then we can let $n=m_1m_2$ and follow the same logic.  If $m_2$ is small then our remaining sum over $\lambda\ast 1$ is the same as the sum that Friedlander and Iwaniec considered in their paper.

In the remaining case, $d$ and $m_2$ are relatively large (i.e. larger than $\frac{x}{q^{1+\varepsilon}}$), yet neither is larger than $q^{1+\varepsilon}$.  To handle this case, we show that most of the information in this sum comes when both $d$ and $m_2$ have large prime factors.  Using exponential sums to detect the congruence, we can then apply recent theorems about Kloosterman sums over primes.  Define
$$S(x,q,a)=\sum_{p\sim x}e\left(\frac{a\bar p}{q}\right).$$
In the case of a single $q$, we apply a result of Fouvry and Shparlinski \cite[Theorem 3.1]{FS}, while in the case where we sum over a range of primes, we apply a result of Irving \cite[Theorem 1.1]{Ir}.
\begin{theorem}[Fouvry-Shparlinski 2011]\label{FSbound} For any integers $a$ and $q$ with $x^\frac 34 \leq q\leq x^\frac 43$ and $(a,q)=1$, and for any $\kappa>0$,
\begin{gather*}
|S(x,q,a)|\ll x^{\frac{15}{16}+\kappa}+x^\frac 23q^{\frac 14+\kappa}.\end{gather*}
\end{theorem}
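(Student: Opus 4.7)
The plan is to decompose the sum $S(x,q,a)$ over primes using a combinatorial identity (Heath--Brown's identity, or a Vaughan-type decomposition) into a weighted sum of $O(\log^{C} x)$ bilinear forms
\[
\sum_{m\sim M}\sum_{n\sim N} a_m\, b_n\, e_q(a\,\overline{mn}),
\]
and then to bound each resulting Type I and Type II piece via the Weil bound for Kloosterman sums, combined with the standard completion technique for incomplete sums.

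For the Type I pieces (those in which one variable, say $n$, runs smoothly over a long range), I would apply Poisson summation in $n$ modulo $q$. The inner sum then becomes a linear combination of complete Kloosterman sums $K(h, am; q)$, each bounded by $q^{1/2+\epsilon}$ via Weil's estimate. Summing over $m$ and the dual frequencies $h$ yields an estimate whose strength depends on the relative sizes of $M$, $q$, and $x/q$, and which is acceptable provided $M$ is sufficiently small.

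For the Type II pieces (with $MN \asymp x$ and both $M, N$ in an intermediate dyadic range), I would apply Cauchy--Schwarz in the $m$ variable to discard the coefficients $a_m$, then square out and interchange summation to obtain
\[
\sum_{n_1, n_2 \sim N} b_{n_1}\,\overline{b_{n_2}} \sum_{m\sim M} e_q\!\left(a(\overline{m n_1} - \overline{m n_2})\right).
\]
Using the identity $\overline{m n_1} - \overline{m n_2} \equiv \overline{m}(n_2 - n_1)\overline{n_1 n_2} \pmod{q}$, the inner $m$-sum is a (shifted) incomplete Kloosterman sum in $m$, controlled by $q^{1/2+\epsilon}$ for $n_1 \neq n_2$ (again via Weil after completion) and trivially by $M$ on the diagonal $n_1 = n_2$. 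Collecting the diagonal and off-diagonal contributions gives the Type II bound.

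The hard part is the optimization: one must tune both the splitting threshold in Heath--Brown's identity and the dyadic parameters $M, N$ so that every resulting piece is $\ll q^{15/16 + \kappa}$. The hypothesis $x^{3/4} \leq q \leq x^{4/3}$ enters precisely at this stage, as it is the range in which both regimes admit nontrivial Weil savings; outside it, either the Type I completion loss or the Type II diagonal swamps the bound. Subsidiary technical issues include the coprimality condition $(mn, q) = 1$ forced by the inverse $\overline{mn}$ (handled by M\"obius inversion and shown to contribute only lower-order terms), logarithmic losses from completion (absorbed by the factor $q^{\kappa}$), and the decomposition of $K(\cdot,\cdot;q)$ across prime-power divisors of $q$ in the non-squarefree case via multiplicativity of Kloosterman sums.
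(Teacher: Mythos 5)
First, a point of orientation: the paper does not prove this statement at all. It is quoted verbatim as Lemma 5 of Fouvry--Shparlinski \cite{FS} and used as a black box, so there is no internal proof to compare your attempt against; within the logic of this paper the correct ``proof'' is simply the citation. What you have written is a plan for reproving the external result from scratch.

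As such a plan, your skeleton is the right one --- \cite{FS} do indeed run a combinatorial decomposition of the primes (a Vaughan-type identity) and reduce to Type I/Type II bilinear forms in $e_q(a\overline{mn})$ --- but there is a genuine gap: the exponent $\frac{15}{16}$ is never derived, and the step you defer as ``the optimization'' is the entire content of the theorem. Worse, the specific Type II treatment you sketch does not close over the whole stated range. After Cauchy--Schwarz in $m$ and Weil-after-completion on the off-diagonal, the standard computation gives roughly $|S|\ll x^{o(1)}\bigl(M\sqrt N+\sqrt{MN^2q^{1/2}}\bigr)=x^{o(1)}\sqrt{x}\bigl(\sqrt M+\sqrt{N}\,q^{1/4}\bigr)$ for $MN\asymp x$. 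Requiring both terms to be $\ll q^{15/16}$ forces $N\ll q^{11/8}/x$, which at the upper end $x\asymp q^{4/3}$ reads $N\ll q^{1/24}$ and hence $M\gg q^{31/24}>q$; at that point the inner sum wraps around the modulus and bare Weil-after-completion gives nothing. This is precisely why \cite{FS} (and the related literature) must invoke stronger inputs --- nontrivial bounds for bilinear forms in Kloosterman fractions (Garaev / Fouvry--Michel type estimates) rather than only the completed Weil bound --- and why the admissible range is pinned at $x^{3/4}\le q\le x^{4/3}$ with the particular exponent $\frac{15}{16}$. Until you either carry out the balancing and exhibit a decomposition in which every dyadic piece lands below $q^{15/16+\kappa}$, or supply the stronger bilinear estimates, the proposal does not establish the stated bound; for the purposes of this paper you should simply cite \cite{FS}.
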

\begin{theorem}[Irving 2014]\label{Irbound} For any integer $q$ with $x^\frac 23 \leq Q\leq x^\frac 32$, and for any $\kappa>0$,
\begin{gather*}
\sum_{q\sim Q}\max_{(a,q)=1}|S(x,q,a)|\ll \left(Q^\frac 54x^\frac{5}{8}+Qx^\frac{9}{10}+Q^\frac 76x^{\frac{13}{18}}\right)x^\kappa.\end{gather*}
\end{theorem}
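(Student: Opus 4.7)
Plan. We bound $S := \sum_{\substack{dm\sim x/2,\, d>D^*\\ dm\equiv a\pmod q}}\lambda(d)$ by isolating a prime factor of $d$ of size compatible with the Fouvry-Shparlinski range and invoking the Kloosterman-sum estimates of Theorems \ref{FSbound} and \ref{Irbound}. After a dyadic decomposition $d\sim D_1$, $m\sim M:=x/(2D_1)$, we factor $d=pe$ with $p$ a prime factor of $d$ and $p\nmid e$, so that multiplicativity gives $\lambda(d)=(1+\chi(p))\lambda(e)$. The analysis is organized around three regimes for the size of $p$: the \emph{FS regime} $p\in[q^{3/4+\epsilon},q^{4/3-\epsilon}]$, the \emph{very large} regime $p>q^{4/3-\epsilon}$, and the \emph{smooth} regime where $d$ has no prime factor above $q^{3/4+\epsilon}$.

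In the FS regime, invert the congruence $pem\equiv a\pmod q$ as $\bar p\equiv \bar a\,em\pmod q$ and expand via additive characters:
$$\mathbf{1}_{\bar p\equiv \bar a\,em\,(q)}=\frac{1}{q}\sum_{h=0}^{q-1} e_q\bigl(h\bar p -h\bar a em\bigr).$$
The $h=0$ contribution separates into $\bigl(\sum_{p\sim P}(1+\chi(p))\bigr)\bigl(\sum_{e,m}\lambda(e)\bigr)/q$. Siegel-zero effects give $\sum_{p\sim P}(1+\chi(p))\ll PL(1,\chi)\log x$ (essentially the version of Corollary \ref{BTSiegel} without the congruence condition), and the $(e,m)$-sum is $\ll x/P$ by a direct count together with identity (\ref{lambdaid2}) governing $\sum_{e>D^*/P}\lambda(e)/e$. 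After summing the $O(\log x)$ dyadic ranges this yields the advertised main term $\frac{x}{q}L(1,\chi)\log^3 x$.

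The $h\ne 0$ contribution is
$$\frac{1}{q}\sum_{h=1}^{q-1}\overline{e_q(h\bar a)}\Bigl(\sum_{p\sim P}(1+\chi(p))\,e_q(h\bar p)\Bigr)\Bigl(\sum_{e,m}\lambda(e)\,e_q(-h\bar a em)\Bigr).$$
Theorem \ref{FSbound} bounds the $p$-sum by $q^{15/16+\epsilon}$ uniformly in $h$ (the $\chi$-twisted piece admits an analogous estimate). To prevent this saving from being overwhelmed by the trivial size $EM\sim x/P$ of the $(e,m)$-sum, we must extract oscillation from $e_q(-h\bar a em)$: executing the $m$-sum as a geometric progression yields $\bigl|\sum_{m\sim M}e_q(-h\bar a em)\bigr|\ll \min\bigl(M,\lVert h\bar a e/q\rVert^{-1}\bigr)$, and Cauchy-Schwarz on the combined $(h,e)$-variables produces a $\sqrt{q/M}$-type saving; the net error is $O(q^{15/16+\epsilon})$. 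Balancing the FS range $P\in[q^{3/4+\epsilon},q^{4/3-\epsilon}]$ against the dyadic sizes of $d,e,m$ forces $q<x^{16/31}$. For part (ii) the identical argument with Theorem \ref{Irbound} replacing Theorem \ref{FSbound} delivers an average error of $Q^{19/10+\epsilon}$, whence the threshold $Q<x^{10/19}$.

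The two residual regimes are handled directly: when $p>q^{4/3-\epsilon}$ we have $p>q^{1+\alpha}$, so Corollary \ref{BTSiegel} applied to the inner prime sum reproduces the main-term bound after the $(e,m)$-summation; and when $d$ is $q^{3/4+\epsilon}$-smooth, such $d$ are sparse and a Shiu-type estimate on smooth integers in arithmetic progressions bounds this contribution well within the claimed error. The principal obstacle lies in the $h\ne 0$ analysis of the FS regime: the coupled Cauchy-Schwarz and geometric-series manipulations on the $(h,e,m)$-variables modulo $q$ must be executed carefully enough to produce the $\sqrt{q/M}$ saving, and this is the technical heart of the argument.
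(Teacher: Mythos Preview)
Your proposal does not address the stated Theorem~\ref{Irbound} at all. Theorem~\ref{Irbound} is Irving's 2014 bound on averaged Kloosterman sums over primes, quoted from \cite{Ir} as an external input; the paper does not prove it, and nothing in your write-up attempts to prove it either. What you have sketched is instead an argument for Theorem~\ref{Shiutriple}, the paper's own estimate for $\sum_{dm\sim x/2,\,d>D^*,\,dm\equiv a\,(q)}\lambda(d)$, which \emph{uses} Theorems~\ref{FSbound} and~\ref{Irbound} as black boxes.

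Treating your sketch as an attempt at Theorem~\ref{Shiutriple}, the overall strategy---isolate a large prime factor and feed it into the Fouvry--Shparlinski/Irving Kloosterman bounds---matches the paper's, but your decomposition is coarser and leaves genuine gaps. The paper first disposes of the edge cases $d\le x/(2q^{1+\alpha})$ or $m\le x/(2q^{1+\alpha})$ (Lemma~\ref{Jedge}), then exploits the structural fact $\lambda(d)\ne 0\Rightarrow d=d_1 d_s^2$ with $d_1\in S_1$, $d_s\in S_{-1}$; only after handling the square part $d_s$ (Lemmas~\ref{J2}--\ref{J3}) and the $d_1$ with a medium divisor (Lemma~\ref{J1E1}) does it reduce to the situation $d_1=td'$ with $t<D^*$ and $d'$ a \emph{single} prime in the required range. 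Your ``smooth regime'' ($d$ has no prime factor exceeding $q^{3/4+\epsilon}$) is not negligible by sparsity: the threshold is large enough that a positive proportion of integers qualify, and in the critical zone where both $d$ and $m$ lie between $x/(2q^{1+\alpha})$ and $q^{1+\alpha}$ neither variable is long enough to carry the congruence via Shiu. This is exactly the obstruction that forces the paper's finer $J_1,J_2,J_3$ and $E_1,E_2$ splitting. Your $h\ne 0$ analysis in the FS regime is likewise only a plan (``Cauchy--Schwarz on the combined $(h,e)$-variables produces a $\sqrt{q/M}$-type saving''), whereas the paper carries out the corresponding exponential-sum work explicitly in Lemmas~\ref{generallimitinglemma}, \ref{J1E1FS}, and~\ref{Qmax}.
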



\section{The Partition}\label{Partition}
To put our process a bit more concretely, fix a small constant $\alpha>0$.  It will be sufficient to take $\alpha<.00001$.  Again, we define the parameter
$$D^*=D^{2+\alpha}.$$
For a given natural number $x$ and modulus $q$, define the sets
\begin{gather*}\
\mathcal D_4=\{z:\exists z_1\mbox{ such that }z_1^2|z,\mbox{ }z_1^2>\frac{x}{q^{1+\alpha}}\},\\
\mathcal D_5=\{z\not\in \mathcal D_4:\exists z_1\mbox{ such that }z_1^2|z,\mbox{ }D^\alpha <z_1^2\leq \frac{x}{q^{1+\alpha}}\},\\
\mathcal D_6=\{z\not\in \mathcal D_4\cup \mathcal D_5:\exists z_1'\mbox{ such that }z_1'|z,\mbox{ }D^2<z_1\leq x^\frac{3}{10}\}.
\end{gather*}
It will be useful to work with the portions of $d$ and $m_2$ that are square-free, and this partitioning will help us to keep track of square-free parts.  It will also make computations easier if either $d$ or $m_2$ has a medium-sized factor, and $\mathcal D_6$ will separate out these cases.

From here, we define the following partition for the set of natural numbers $dm\leq x$ with $m=m_1m_2$:
\begin{gather*}
I_0=\{(d,m_1,m_2):d\leq D^*\},\\
I_1=\{(d,m_1,m_2)\not \in I_0:m_2\leq D^*\},\\
I_2=\{(d,m_1,m_2)\not \in \bigcup_{i=0}^1 I_i:dm_1\geq q^{1+\alpha}\},\\
I_3=\{(d,m_1,m_2)\not \in \bigcup_{i=0}^2 I_i:m_1m_2\geq q^{1+\alpha}\},\\
I_4=\{d,m_1,m_2\not \in \bigcup_{i=0}^3 I_i:\mbox{at least one of }d,m_2\in \mathcal D_4\},\\
I_5=\{d,m_1,m_2\not \in \bigcup_{i=0}^4 I_i:\mbox{at least one of }d,m_2\in \mathcal D_5\},\\
I_6=\{d,m_1,m_2\not \in \bigcup_{i=0}^5 I_i:\mbox{at least one of }d,m_2\in \mathcal D_6\},\\
I_7=\{d,m_1,m_2\not \in \bigcup_{i=0}^6 I_i\}.
\end{gather*}
We can then define
$$Z_j=\sum_{\substack{(d,m_1,m_2)\in I_j \\ dm_1m_2\equiv a\pmod q}}\nu (d)\Lambda(m_1)\lambda(m_2)$$
and
$$Z_j^*=\sum_{\substack{(d,m_1,m_2)\in I_j \\ (dm_1m_2,q)=1}}\nu(d)\Lambda(m_1)\lambda(m_2).$$
Recalling the relations in (\ref{nutolam}) and (\ref{nutolam2}), we also define
$$T_j=\sum_{\substack{(d,m_1,m_2)\in I_j \\ dm_1m_2\equiv a\pmod q}}\lambda (d)\lambda(m_2)$$
and
$$T_j^*=\sum_{\substack{(d,m_1,m_2)\in I_j \\ (dm_1m_2,q)=1}}\lambda(d)\lambda(m_2).$$
We can use these to bound $Z_j$, as we showed earlier that
\begin{gather}\label{ZT}|Z_j|\ll T_j\log x.
\end{gather}

Since $Z_j$ and $T_j$ depend on $a$ and $q$, we will sometimes denote these sums by $Z_j(a,q)$ and $T_j(a,q)$ (and likewise for $Z_j^*(q)$ and $T_j^*(q)$).  When the context is clear, we will drop the parenthesized term.

The first sum $Z_0$ is the main term.  By Proposition 4.3 of \cite{FI03}, this evaluates to the following.
\begin{theorem}\label{FImainterm} If $x>qD$, then for any $\varepsilon'>0$,
$$Z_0=\frac{\psi(x)}{\phi(q)}\left(1-\chi\left(\frac{aD}{(q,D)}\right)+O\left(L(1,\chi)\log^3 x+\frac{D^\frac 54q^\frac 34}{\sqrt x}x^{\varepsilon'}\right)\right).$$
\end{theorem}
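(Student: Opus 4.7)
The plan is to detect the congruence $dm \equiv a \pmod q$ in $\psi^*$ via orthogonality of Dirichlet characters modulo $q$, reducing the problem to weighted sums, for each character $\chi_1$ mod $q$, of
$$B(x,\chi_1) := \sum_{\substack{d \leq D^* \\ (d,q)=1}} \chi_1(d)\nu(d) \sum_{\substack{m \leq x/d \\ (m,q)=1}} \chi_1(m)\lambda'(m).$$
Since $\lambda' = \chi \ast \log$, the inner Dirichlet series attached to $\chi_1(m)\lambda'(m)$ factors (up to harmless finite Euler corrections at $p \mid q$) as $L(s,\chi\chi_1)\cdot(-L'(s,\chi_1))$, so the singularities of each $B(x,\chi_1)$ at $s=1$ are determined by the values and derivatives of these two $L$-functions there.

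Next I identify which characters $\chi_1$ contribute main terms. For the principal character $\chi_0$ mod $q$, the factor $-L'(s,\chi_0)$ inherits a double pole at $s=1$ from $-\zeta'(s)$, so Perron's formula gives an inner-sum residue of size $L(1,\chi)(x/d)\log(x/d)$. For the unique character $\chi^{\sharp}$ mod $q$ that makes $\chi\chi^{\sharp}$ principal --- this character is the pullback through $(\mathbb{Z}/q)^{\times} \to (\mathbb{Z}/(q,D))^{\times}$ of the character induced by $\chi$, which is precisely why the twist $\chi(aD/(q,D))$ appears --- the roles of $\zeta$ and $L(\cdot,\chi)$ switch, producing a residue proportional to $L'(1,\chi^{\sharp})(x/d)$, again of order $L(1,\chi)(x/d)\log x$ but with opposite sign. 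All other characters $\chi_1$ have both factors holomorphic and bounded at $s=1$ and so contribute only to the error.

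After weighting by $\nu(d)/d$ and summing over $d \leq D^*$, the principal and $\chi^{\sharp}$ pieces combine, via $\Lambda = \nu \ast \lambda'$, to reproduce exactly $\frac{\psi(x)}{\phi(q)}\bigl(1 - \chi(aD/(q,D))\bigr)$. The crucial cancellation is that the generating series $\sum_d \nu(d)d^{-s} = \bigl(\zeta(s)L(s,\chi)\bigr)^{-1}$ vanishes at $s=1$, and its local behavior there precisely kills the spurious $L(1,\chi)\log x$ factor present in each of the two main-term residues, leaving a genuine prime-counting main term of the expected shape. The principal technical obstacle is tracking $\chi^{\sharp}$ and the Euler factors at primes $p \mid (q,D)$ with enough precision to produce the exact twist $\chi(aD/(q,D))$, rather than merely some character of the same order; this is purely a bookkeeping issue but one that dictates the form of the statement.

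Finally, I bound the contributions of non-exceptional characters by applying Perron's formula on the line $\mathrm{Re}(s) = 1 + 1/\log x$ and shifting to $\mathrm{Re}(s) = \tfrac12$, using the convexity bounds $|L(\tfrac12+it,\chi_1)|, |L'(\tfrac12+it,\chi_1)| \ll (q(2+|t|))^{1/4+\epsilon'}\log q$ on the shifted contour. Combining these estimates with $\sum_{d \leq D^*}|\nu(d)|/d \ll L(1,\chi)\log D$, which follows from (\ref{lambdanubound}) and (\ref{lambdaid2}), produces an error of shape $L(1,\chi)\log^3 x$ from the residual analysis near $s=1$ together with $D^{5/4}q^{3/4}x^{\epsilon'-1/2}$ from the horizontal-segment truncation (the exponent $5/4$ on $D$ arising from a factor of $D$ in the conductor combined with $D^{1/4}$ from the convexity exponent), matching the stated error exactly.
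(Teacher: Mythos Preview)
The present paper does not prove this statement: it is quoted as Proposition~4.3 of \cite{FI03} and used as a black box, with the surrounding text immediately turning to the bound for $\psi_*$. There is therefore no in-paper argument to compare your sketch against; any comparison would have to be with \cite{FI03} itself.

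On your sketch: the overall architecture (orthogonality over characters mod $q$, isolate the two characters $\chi_0$ and $\chi^{\sharp}$ producing poles, bound the rest) is reasonable, but two details are off. First, your appeal to (\ref{lambdaid2}) for $\sum_{d\le D^*}|\nu(d)|/d\ll L(1,\chi)\log D$ is misdirected: (\ref{lambdaid2}) controls the tail $d>D^*$, not the initial segment $d\le D^*$, and over $d\le D^*$ the bound $|\nu|\le\lambda$ together with partial summation on (\ref{lambdaid1}) does not yield $O(L(1,\chi)\log D)$. Second, the specific error shape $D^{5/4}q^{3/4}x^{\epsilon'-1/2}$, and in particular the exponent $3/4$ on $q$, is the fingerprint of the Weil bound for Kloosterman sums entering through the binary-divisor-in-progressions machinery (cf.\ Theorem~\ref{binarydivisor} and its $\chi$-twisted analogue), not of $L$-function convexity summed over $\phi(q)$ characters. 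In \cite{FI03} the inner sum $\sum_{m\le x/d,\ m\equiv a\bar d\ (q)}\lambda'(m)$, with $\lambda'=\chi\ast\log$ and $d\le D^*$ negligibly small, is treated directly as a twisted binary divisor problem at level $q<x^{2/3}$; the exponents on $q$ and $D$ come from that route. Your convexity-based accounting in the final paragraph does not transparently reproduce them.
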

This is non-trivial if $q<x^{\frac 23-\varepsilon}$ for an $\varepsilon$ determined by $\varepsilon'$ and $D$.  Notably, since our Main Theorems require $q<x^{\frac{16}{31}-\varepsilon}$, we see that the latter error term is inconsequential to our work.

For the remaining $j$, we turn our focus from $Z_j$ to $T_j$.

The sum $T_1$ is essentially a ternary divisor sum twisted by our character $\chi$.  Friedlander and Iwaniec (using their result in \cite{FI85}) found that when $q\leq x^{\frac{58(1-\frac 1r)}{115}}$ with $r=554,401$,
$$T_1=(1+o(1))\frac{1}{\phi(q)}T^*_1.$$
Here, we are able to increase the bound on $q$ to $q\leq x^{\frac{30}{59}-\varepsilon}$ for individual $q$ and $q\leq x^{\frac{16}{31}-\varepsilon}$ over a range of $q$.  This is the reason for the increased level of distribution in Theorems \ref{MainTheoremi} and \ref{MainTheoremii}.  We explain the ideas behind this improvement in Section \ref{DivSumEH}.

The sum $T_7$ is the sum where we must use the \cite{FS} and \cite{Ir} results above.  The bounding of $T_2$ through $T_6$, meanwhile, is a fairly straightforward application of (\ref{lambdaid2}) and the multiplicativity of $\lambda$.

Eventually, we will find the following inequalities.
\begin{theorem}\label{Tsforall}
For $x$ and $q$ as above, if $\sqrt x <q< x^{\frac{30}{59}-\varepsilon'}$ for some $\varepsilon'$ such that $\varepsilon'>62\eta$,
\begin{gather*}
T_1\ll \frac{x}{\phi(q)}L(1,\chi)\log^2 x\log\log x+\frac{x}{q}L(1,\chi)^2\log^3x,\\
T_2\ll \frac{x}{q}\log^3 xL(1,\chi),\\
T_3\ll \frac{x}{q}\log^3 xL(1,\chi),\\
T_4\ll q^{\frac 12+\frac{3\alpha}{4}},\\
T_5\ll \frac x{qD^{\frac{\alpha}{4}}}\log^4 x,\\
T_6\ll \frac xqL(1,\chi)\log^5 x,\\
T_7\ll \frac{x}{q}L(1,\chi)\log^6x.
\end{gather*}
If $\sqrt x<Q<x^{\frac{16}{31}-\varepsilon'}$ with $\varepsilon'>62\eta$, then $T_2$ through $T_6$ are the same for each $q$ in the range $[Q,2Q]$, while
$$\sum_{q\sim Q}\max_{(a,q)=1}|T_1(a,q)|\ll xL(1,\chi)\log^2 x\log\log x+xL(1,\chi)^2\log^3x,$$
$$\sum_{q\sim Q}\max_{(a,q)=1}|T_7(a,q)|\ll xL(1,\chi)\log^6 x,$$
\end{theorem}
From these statements and the relationship between $Z_j$ and $T_j$, Theorems \ref{MainTheoremi} and \ref{MainTheoremii} clearly follow.  Note that the largest power of log here is $\log^6x$ (in $T_7$), which, by (\ref{ZT}), means that the power of log in the main theorems must be $\log^7x$.
\section{Divisor Sums}\label{DivSumEH}

Both \cite{FI03} and the current paper depend heavily on results about binary and ternary divisor sums in arithmetic progressions.

The classical result about binary divisor sums in arithmetic progressions was proved in the 1950's by Selberg and Hooley - see \cite{Ho}, \cite[p. 234-237]{Se}, \cite[Corollary 1]{HB79}.

\begin{theorem}\label{binarydivisor}
If $q\leq x^{\frac 23-2\varepsilon}$ and $(a,q)=1$, then
$$\sum_{\substack{n\leq x\\ n\equiv a\pmod q}}\tau(n)=\frac{1}{\phi(q)}\sum_{\substack{n\leq x\\ (n,q)=1}}\tau(n)+O\left(\frac{x}{q^{1+\varepsilon}}\right).$$
\end{theorem}
The first result on ternary sums on arithmetic progressions that moved beyond the square-root barrier came from Friedlander and Iwaniec in 1985 \cite[p. 339]{FI85}.
\begin{theorem}[Friedlander-Iwaniec, 1985]\label{FIternary}
If $q<x^{\frac{58}{115}}$ and $(a,q)=1$ then
$$\sum_{\substack{n\leq x\\ n\equiv a\pmod q}}\tau_3(n)=\frac{1}{\phi(q)}\sum_{\substack{n\leq x\\ (n,q)=1}}\tau_3(n)+O\left(\frac{x}{q^{1+\varepsilon}}\right).$$
\end{theorem}
Since $\tau(n)=(1\ast 1)(n)$ and $\tau_3(n)=(1\ast 1\ast 1)(n)$, it seems logical to hope that we can generalize these results to other convolutions such as $(\chi \ast 1)(n)$ and $(\chi \ast 1\ast 1)(n)$.  A key step in \cite{FI03} is the realization that this can indeed happen.  In fact, equation (5.6) in \cite{FI03} gives the following.
\begin{theorem}[Friedlander-Iwaniec, 2003]\label{FIternary2}
If $x^{\frac{92}{185}}<q<x^{\frac{58}{115}}$ and $(a,q)=1$, then
$$\sum_{\substack{dm\leq x \\ d>D^*\\dm\equiv a\pmod q}}\lambda(d)=\frac{1}{\phi(q)}\sum_{\substack{dm\leq x \\ d>D^*\\ (dm,q)=1}}\lambda(d)+O\left(Dx^{\frac{271}{300}+\varepsilon}q^{-97/120}\right).$$
\end{theorem}
This sum provides the bound for $\theta$ in \cite{FI03}, as all of the other sums in that paper give a looser bound than this for $q$.  Thus, we see that if one is to improve the level of distribution (i.e. increase the value of $\theta$ such that the theorem holds for $q<x^\theta$), one must somehow improve this ternary divisor sum result.

One of the key ideas in Friedlander and Iwaniec's proof of Theorem \ref{FIternary2} was the bounding of the Kloosterman sum
$$K_\chi(a)=\sum_{1\leq h\leq H}\sum_{1\leq n\leq N}\sum_{1\leq m\leq M}e\left(-\frac{ah\bar m\bar n}{q}\right)\chi(n).$$
In particular, the authors found two bounds for this sum.  We will use one of their two bounds, specifically the one below that is denoted (2.6) in their paper.
\begin{theorem}[Friedlander-Iwaniec, 1985]\label{FI:2.6}
Let $(a,q)=1$.  Then
\begin{align*}K_\chi(a)
\ll & Dx^\varepsilon \left(q^\frac 34 H^{\frac 12}M^{\frac 12} + q^\frac 14HM + q^\frac 13H^\frac 23M^\frac 13N^\frac 23+HM^\frac 23N^\frac 23+q^{-1}HMN\right).
\end{align*}
\end{theorem}
Note that the authors originally proved this in \cite{FI85} without the character, and the bound that they found did not have the additional $D$ in the front.  The later adaptation of this result to the case where one of the variables is twisted by $\chi$ costs an additional factor of $D$ (as noted in the discussion just above (5.6) in \cite{FI03}).

The authors of \cite{FI85} also found another bound for $K_\chi(a)$ (which they denoted 2.5).  In lieu of this, however, we use a newer result by Shparlinski, which is Theorem 1.1 in \cite{Shp}.

\begin{theorem}[Shparlinski, 2019]\label{Shp:T1} Let $(a,q)=1$.  Then
$$K_\chi(a)\ll \left(HM+(HM)^\frac 34Q^\frac 14\right)\left(N^\frac 78Q^{-\frac 18}+N^\frac 12\right)Q^{o(1)}.$$
\end{theorem}
If we are willing to consider a range of $q$, we can instead apply Theorem 1.2 of \cite{Shp}:
\begin{theorem}[Shparlinski, 2019]\label{Shp:T2}
Let $\kappa>0$ be a fixed real number, and let $Q$ be sufficiently large.  For all but at most $Q^{1-4\kappa+o(1)}$ values of $q\in [Q,2Q]$,
\begin{gather}\label{goodq}\max_{(a,q)=1}|K_\chi(a)|\ll \left(HM+(HM)^\frac 34Q^\frac 14\right)\left(NQ^{-\frac 14}+N^\frac 12\right)Q^{\kappa+o(1)}.
\end{gather}
\end{theorem}
Using these new bounds of Shparlinski, we will find a slightly different, but slightly stronger, version of Theorem \ref{FIternary2}, proving the following.
\begin{theorem}\label{terdivsum}
For any $\varepsilon>0$, let $\sqrt x<q<x^{\frac{30}{59}-\varepsilon'}$ for any $\varepsilon>0$ with $\varepsilon'>60\eta$ and $(a,Dq)=1$.  Then
$$\sum_{\substack{dm\leq x \\ d>D^*\\dm\equiv a\pmod{Dq}}}\lambda(d)=\frac{1}{\phi(q)}\sum_{\substack{dm\leq x \\ d>D^*\\ (dm,Dq)=1}}\lambda(d)+O\left(\frac{xL(1,\chi)}{Dq\log^8 x}+\frac{x}{Dq}L(1,\chi)^2\log^2 x\right).$$
Additionally, let $\sqrt x<Q<x^{\frac{16}{31}-\varepsilon'}$ for any $\varepsilon>0$ with $\varepsilon'>60\eta$.  Then
$$\sum_{q\sim Q}\max_{(a,Dq)=1}\left|\sum_{\substack{dm\leq x \\ d>D^*\\dm\equiv a\pmod{Dq}}}\lambda(d)-\frac{1}{\phi(q)}\sum_{\substack{dm\leq x \\ d>D^*\\ (dm,Dq)=1}}\lambda(d)\right|\ll \frac{xL(1,\chi)}{D\log^8 x}+\frac xDL(1,\chi)^2\log^2 x.$$
\end{theorem}
Obviously, it would not be difficult to remove the $D$ from the congruences and relative primality statements above.  We do not do so here since it unnecessarily complicates things (as one must consider the impact of the common factors of $D$ and $q$), and also because it is unhelpful for our result.

With this new bound on the level of distribution of twisted ternary divisor sums, we will be able to show the bounds for $T_1$ in Theorem \ref{Tsforall}.  We will prove Theorem \ref{terdivsum} in Sections \ref{binsumsec} to \ref{T1sec}.

\section{Remarks}
This proof proceeds in three main parts.  The first part, which is Section \ref{easysec}, uses Shiu's bound for multiplicative functions to evaluate $T_2$ through $T_6$.  The second part, which is Sections \ref{Kloosec1} to \ref{T7sec}, uses exponential sums and Kloosterman sums to evaluate $T_7$.  Sections \ref{Kloosec1} and \ref{Kloosec2} applies the Kloosterman theorems above to binary sums where one of the terms is prime, and then Section \ref{T7sec} applies this work to the quantity $T_7.$

The third part, which is Sections \ref{binsumsec} to \ref{T1sec}, examines twisted ternary sums in arithmetic progressions and then uses the results of this examination to prove the bounds for $T_1$.  In Section \ref{binsumsec}, we handle the easiest cases where two of the variables are small, and we establish some identities that will help us for later. Sections \ref{binsumsec1a} and \ref{Binsumsec2} examine the case where one of the variables is small, and we prove a result about twisted binary sums that we then apply to this case.  Sections \ref{tersumsec1} to \ref{tersumsec3} give an adapted version of Section 3 of \cite{FI85} that will allow us to use Theorems \ref{FI:2.6} through \ref{Shp:T2} above.  Sections \ref{M2sec} to \ref{ShpQ} apply these theorems to find better levels bounds for the exponential sums that arise from Friedlander and Iwaniec's methods.  Finally, section \ref{T1sec} applies the results of all of these sections to prove Theorem \ref{terdivsum}, bound $T_1$, and complete the proof.

A few remarks:

- We note here that Theorem \ref{FIternary} is not the optimal known result for ternary divisor sums.  Indeed, Heath-Brown \cite{HB86} improved the exponent to $\frac 12+\frac{1}{82}$ for individual $q$ and $\frac 12+\frac{1}{42}$ over a range of $q$. Fouvry, Kowalski, and Michel \cite{FKM} later proved exponents of $\frac 12+\frac{1}{46}$ in the case that $q$ is prime and $\frac 12+\frac{1}{34}$ when $q$ is averaged over a fixed residue class, while a recent preprint of Sharma \cite{Sha} raises the exponent to $\frac 12+\frac{1}{30}$ for individual $q$ in the case where $q$ is square-free or an odd prime power.  However, it is not yet known how to adapt \cite{HB86}, \cite{FKM}, or \cite{Sha} to more general divisor sums like ours.

By contrast, \cite{FI85} and \cite{Shp} adapt more easily to the introduction of $\chi$.  Applying $\chi$ to \cite{FI85} only costs us an additional multiple of $D$ on the latter part of the error term (as Friedlander and Iwaniec themselves note in Proposition 5 of \cite{FI03}), while \cite{Shp} is actually stated such that one could insert any 1-bounded function into the sum with no change in the bound.  


- We also note here that since we have essentially turned a problem about primes into one about divisor sums, we are able to find the following version of the Brun-Titchmarsh theorem if one assumes Siegel zeroes:
\begin{CorBT}
If $q\leq \frac{x^{\frac 23-\varepsilon}}{D^\frac 32}$ then
\begin{gather}\label{BTTH}
\sum_{\substack{ p\leq x \\  p\equiv a \pmod q \\ \chi(p)=1}}1\ll \frac{x}{q}L(1,\chi)\log x.
\end{gather}
\end{CorBT}
This holds because we can bound the characteristic function on such primes with the binary divisor sum-like function $\lambda$, which allows us to use the higher level of distribution $\frac 23-2\varepsilon$ that we saw in Theorem \ref{binarydivisor}.  To our knowledge, no similar bound for primes under the existence of Siegel zeroes appears in the literature.  The closest appears to be that of Maynard \cite{MaBT}, who found that an inequality like (\ref{BTTH}) holds when $q\leq x^\frac 17$ (see Proposition 5 of that paper).

\section{Multiplicativity and the Easier Sums}\label{easysec}

In this section, we handle the easiest sums, namely $T_2$ through $T_6$.  Here, we repeatedly exploit Shiu's theorem for multiplicative functions \cite{Sh}, which states the following.
\begin{theorem}
Let $f$ be a nonnegative multiplicative function for which there exist constants $A_1$ and $A_2$ such that

(i) for any prime $p$ and any natural number $k$,
$$f(p^k)\leq A^k,$$

(ii) for every $\varepsilon>0$ and every natural number $n$,
$$f(n)\leq A_2n^\varepsilon.$$

Additionally, for some small $\alpha,\beta$ with $0<\alpha,\beta<\frac 12$, let $x$, $y$, $q$, and $a$ be integers with $y>q^{1+\alpha}$, $x\geq y>x^\beta$, and $(a,q)=1$.  Then
\begin{gather*}
\sum_{\substack{x-y<n\leq x \\ n\equiv a\pmod q}}f(n)\ll \frac{x}{\phi(q)\log x}\exp\left(\sum_{\substack{p\leq x\\p\nmid q}}\frac{f(p)}{p}\right).
\end{gather*}
\end{theorem}
In most instances, we will take $y\gg x$ as well, which means that the bound on $q$ can be expressed as $x>q^{1+\alpha}$.

In the case where $f=\tau_k$, this gives
\begin{gather}
\sum_{\substack{x-y<n\leq x \\ n\equiv a\pmod q}}\tau_k(n)\ll \frac{x}{q}\log^{k-1} x.
\end{gather}
Trivially, we also have
\begin{gather}
\sum_{n\leq x}\frac{\tau_k(n)}{n}\ll \left(\sum_{n\leq x}\frac 1n\right)^k\ll \log^k x.
\end{gather}
With these, we now handle our easier $T_i$:

\begin{theorem}
$$T_2\ll \frac xq\log^3 xL(1,\chi),$$
and
$$T_3\ll \frac xq\log^3 xL(1,\chi).$$
\end{theorem}
\begin{proof}
We prove for $T_2$; the proof is identical for $T_3$.  Since $(\lambda\ast 1)(n)\leq \tau_3(n)$, we let $r=dm_1$ and find
$$T_2\ll \sum_{D^*<m_2\leq \frac{x}{q^{1+\alpha}}}\lambda(m_2)\sum_{\substack{dm_1\leq \frac{x}{m_2} \\ dm_1\equiv a\overline{m_2}\pmod q}}\lambda(d)\ll \sum_{D^*<m_2\leq \frac{x}{q^{1+\alpha}}}\lambda(m_2)\sum_{\substack{r\leq \frac{x}{m_2} \\ r\equiv a\overline{m_2}\pmod q}}\tau_3(r).$$
As $r>q^{1+\alpha}$, we can invoke Shiu's theorem and (\ref{lambdaid2}) to find that
$$\ll \frac xq\log^2 x\sum_{D^*<m_2\leq \frac{x}{q^{1+\alpha}}}\frac{\lambda(m_2)}{m_2}\ll \frac xq\log^3 xL(1,\chi).$$
\end{proof}

In the next several sections, we will need to keep track of the square and squarefree parts of $d$ and $m_2$.  Hence, we will write
\begin{gather*}
d=d_sd',\\
m_2=m_sm',
\end{gather*}
where $d_s$ and $m_s$ are squares and $d'$ and $m'$ are square-free.

\begin{theorem} If $q>\sqrt x$ then
$$T_4\ll q^{\frac 12+\frac{3\alpha}{4}}.$$
\end{theorem}
\begin{proof}
We will assume that $d\in \mathcal D_4$; the proof is identical if $m_2\in \mathcal D_4$.  Let $r=d'm_1m_2$.  So
\begin{align*}
T_4\ll &\mathop{\sum\sum\sum\sum}\limits_{\substack{d_sd'm_1m_2\leq x \\ d_sd'm_1m_2\equiv a\pmod q \\ \frac{x}{q^{1+\alpha}}<d_s}}\tau(d_s)\tau(d')\tau(m_2)\\
\ll &x^{\frac{\alpha}{5}}\sum_{r\leq q^{1+\alpha}}\sum_{\substack{\frac{x}{q^{1+\alpha}}<d_s\leq \frac xr \\ d_s\equiv a\bar r\pmod q }}1,
\end{align*}
since $\tau(n)\ll x^{o(1)}$.  Note that we must have $d_s\leq q^{1+\alpha}$, else we would have $dm_1>q^{1+\alpha}$ and hence this triple would have been in $I_2$.  So $r\geq \frac{x}{q^{1+\alpha}}$.

Using the notation that 
$$e_z(y)=e^{\frac{2\pi i y}{z}},$$
we express the congruence using exponential sums:
\begin{align*}
&x^{\frac{\alpha}{5}}\mathop{\sum\sum}\limits_{\substack{d_sr\leq x \\ d_sr\equiv a\pmod q \\ \frac{x}{q^{1+\alpha}}<d_s\leq q^{1+\alpha}}}1\\
&=\frac{x^{\frac{\alpha}{5}}}{q}\sum_{v=1}^{q-1}\mathop{\sum\sum}\limits_{\substack{d_sr\leq x \\ \frac{x}{q^{1+\alpha}}<d_s\leq q^{1+\alpha}}}e_q\left(av\overline{d_s}-vr\right)+\frac{x^{\frac{\alpha}{5}}}{q}\mathop{\sum\sum}\limits_{\substack{d_sr\leq x \\ \frac{x}{q^{1+\alpha}}<d_s\leq q^{1+\alpha}}}1\\
\end{align*}
For the latter double sum, we let $d_s=u^2$ and see that
\begin{align*}\frac{x^{\frac{\alpha}{5}}}{q}\mathop{\sum\sum}\limits_{\substack{d_sr\leq x \\ \frac{x}{q^{1+\alpha}}<d_s\leq q^{1+\alpha}}}1=&\frac{x^{\frac{\alpha}{5}}}{q}\sum_{\frac{\sqrt x}{q^{\frac 12+\frac \alpha 2}}<u\leq q^{\frac 12+\frac \alpha 2}}\sum_{r\leq \frac{x}{u^2}}1\\
=&\frac{x^{\frac{\alpha}{5}}}{q}\sum_{\frac{\sqrt x}{q^{\frac 12+\frac \alpha 2}}<u\leq q^{\frac 12+\frac \alpha 2}}\frac{x}{u^2}\\
\ll &\frac{x^{\frac 12+\frac{\alpha}{5}}}{q^{\frac 12-\frac \alpha 2}}\ll q^{\frac 12+\frac{3\alpha}{4}},
\end{align*}
since $q>\sqrt x$.

For the remaining sum,
\begin{align*}
\frac{x^{\frac{\alpha}{5}}}{q}&\sum_{v=1}^{q-1}\mathop{\sum\sum}\limits_{\substack{d_sr\leq x \\ \frac{x}{q^{1+\alpha}}<d_s\leq q^{1+\alpha}}}e_q\left(av\overline{d_s}-vr\right)\\
&\ll \frac{x^{\frac{\alpha}{5}}}{q}\sum_{v=1}^{q-1}\sum_{\frac{x}{q^{1+\alpha}}<d_s\leq q^{1+\alpha}}\frac{1}{||v/q||}\\
&=x^{\frac{\alpha}{4}}\sum_{\frac{x}{q^{1+\alpha}}<d_s\leq q^{1+\alpha}}1\\
&\ll x^{\frac{\alpha}{4}}q^{\frac 12+\frac{\alpha}{2}},
\end{align*}
where $||\cdot ||$ indicates the distance to the nearest integer.  Again, we use the fact that $q>\sqrt x$ to write
$$x^{\frac{\alpha}{4}}q^{\frac 12+\frac{\alpha}{2}}<q^{\frac 12+\frac{3\alpha}{4}},$$and hence the lemma follows.

\end{proof}
\begin{theorem}
$$T_5\ll \frac x{qD^{\frac{\alpha}{4}}}\log^4 x.$$
\end{theorem}
\begin{proof}
As above, we will assume that $d\in \mathcal D_5$; the proof is identical if $m_2\in \mathcal D_5$.  So
\begin{align*}
T_5\ll &\mathop{\sum\sum\sum\sum}\limits_{\substack{d_sd'm_1m_2\leq x \\ d_sd'm_1m_2\equiv a\pmod q \\ D^\alpha<d_s<\frac{x}{q^{1+\alpha}}}}\tau(d_s)\tau(d')\tau(m_2).
\end{align*}
Again, let $r=d'm_1m_2$.  Since $D^\alpha<d_s<\frac{x}{q^{1+\alpha}}$, we know that $r>q^{1+\varepsilon}$, and hence
\begin{align*}
T_5\ll &\sum_{D^\alpha<d_s<\frac{x}{q^{1+\alpha}}}\tau(d_s)\sum_{\substack{r\leq \frac{x}{d_s} \\ r\equiv a\overline{d_s}\pmod q}}\tau_5(r)\\
\ll &\frac xq\log^4 x\sum_{D^\alpha<d_s<\frac{x}{q^{1+\alpha}}}\frac{\tau(d_s)}{d_s}.
\end{align*}
Letting $d_s=u^2$, and noting that $\tau(u^2)\leq u^\frac \alpha 4$, we have
\begin{align*}
T_5\ll &\frac xq\log^4 x\sum_{D^{\frac{\alpha}{2}}<u<\frac{x}{q^{1+\alpha}}}\frac{1}{u^{2-\frac{\alpha}{4}}}\\
\ll &\frac x{\phi(q)D^{\frac{\alpha}{4}}}\log^4 x.
\end{align*}
\end{proof}

\begin{theorem}\label{T6Bd} If $q\leq x^{\frac{7}{10}-\varepsilon}$ for any $\varepsilon>2\eta$, then
$$T_6\ll \frac xqL(1,\chi)\log^5 x.$$
\end{theorem}
\begin{proof}
Again, assume that $d\in \mathcal D_6$; the proof works the same if $m_2\in \mathcal D_6$.  Hence $d_s\leq D^\alpha$.  We claim that
\begin{gather}\label{split}
\lambda(d)\leq \lambda(d_s)\lambda(d').
\end{gather}
To show this, we first recall that for any prime, we either have $\chi(p)=\pm 1$ or 0.  If $\chi(p)=-1$ then $\lambda(p^{2k})=1$ and hence
$$\lambda(p^{2k+1})=0=\lambda(p)=\lambda(p)\lambda(p^{2k}).$$
If $\chi(p)=0$ then we also have $\lambda(p^{2k})=1$ and hence
$$\lambda(p^{2k+1})=1=\lambda(p)=\lambda(p)\lambda(p^{2k}).$$
If $\chi(p)=1$, we have
$$\lambda(p^{2k+1})=\tau(p^{2k+1})=2k+2\leq (2k+1)(2)=\tau(p^{2k})\tau(p)=\lambda(p^{2k})\lambda(p).$$
So for a given natural number $d$, write
$$d=p_1^{2k_1+j_1}\cdots p_t^{2k_t+j_t}$$
where $k_i$ is a whole number and $j_i$ is 0 or 1. Since $\lambda$ is multiplicative, we then have
\begin{align*}
\lambda(d)=&\lambda(p_1^{2k_1+j_1}\cdots p_t^{2k_t+j_t})=\lambda(p_1^{2k_1+j_1})\cdots \lambda(p_t^{2k_t+j_t})\\
\leq &\lambda(p_1^{2k_1})\lambda(p_1^{j_1})\cdots \lambda(p_t^{2k_t})\lambda(p_t^{j_t})=\lambda(p_1^{2k_1}\cdots p_t^{2k_t})\lambda(p_1^{j_1}\cdots p_t^{j_t})=\lambda(d_s)\lambda(d').
\end{align*}
This proves (\ref{split}).  So
\begin{align*}
T_6\ll &\mathop{\sum\sum\sum\sum}\limits_{\substack{m_1d_sd'm_2\leq \frac{x}{m_1} \\ d_sd'm_2\equiv a\pmod q \\ d_s<D^\alpha}}\lambda(d_s)\lambda(d')\lambda(m_2).
\end{align*}
Now, by the assumption that $d\in \mathcal D_6$, we can write $d'=jk$ where $D^*<j\leq x^\frac{3}{10}$.  Since $d'$ has no square factors, we know that $(j,k)=1$, and hence by the multiplicativity of $\lambda$ we have
$$\lambda(d')=\lambda(j)\lambda(k).$$
Hence,
\begin{align*}T_6\ll &\sum_{d_s\leq D^\alpha}\lambda(d_s)\sum_{D^*<j\leq x^\frac{3}{10}}\lambda(j)\mathop{\sum\sum\sum}\limits_{\substack{km_1m_2\leq \frac{x}{d_sj} \\ km_1m_2\equiv a\overline{jd_s}\pmod q}}\lambda(k)\lambda(m_2).
\end{align*}
Letting $r=m_1m_2k$, and again letting $u=\sqrt{d_s}$, we know that $$r>x^{\frac{7}{10}}D^{-\alpha}>q^{1+\varepsilon'}$$
for some $\varepsilon'>0$.  So again by Shiu's theorem,
\begin{align*}
T_6\ll &\sum_{d_s\leq D^\alpha}\tau(u^2)\sum_{D^*<j\leq x^\frac{3}{10}}\lambda(j)\sum_{\substack{r\leq \frac{x}{m_1ju^2} \\ r\equiv a\overline{m_1ju^2}\pmod q}}\tau_5(r)\\
\ll &\frac xq\log^4 x\sum_{u\leq D^\frac{\alpha}{2}}\frac{1}{u^{2-\frac{\alpha}{4}}}\sum_{D^*<j\leq x^\frac{3}{10}}\frac{\lambda(j)}{j}\\
\ll &\frac xqL(1,\chi)\log^5 x\sum_{u\leq D^\frac{\alpha}{2}}\frac{1}{u^{2-\frac{\alpha}{4}}}\\
\ll &\frac xqL(1,\chi)\log^5 x.
\end{align*}

\end{proof}

For the remainder of the paper, we can assume that $d_s$ and $m_s$ are smaller than $D^\alpha$.

\section{Kloosterman Sums and Common Factors}\label{Kloosec1}

In the next three sections, we prove lemmas that will allow us to apply the results about Kloosterman sums over primes to $T_7$.  In this section, we prove a lemma about integers that share a large common factor with $q$.  In the next, we use the Fouvry-Shparlinski and Irving results to prove useful lemmas about prime sums in arithmetic progressions.  Finally, in Section \ref{T7sec}, we apply these lemmas to evaluate $T_7$.


We will soon write the congruence modulo $q$ in terms of exponential sums.  However, when we do so, we will need to know how many of the numbers $r$ on the interval $[1,q-1]$ have a large factor in common with $q$.  To this end, we prove the following lemma.

\begin{lemma}\label{bigr}\begin{align*}
\sum_{\stackrel{1\leq r\leq q-1}{(r,q)>q^{\frac 15-\alpha}}}\frac 1r
\ll &q^{-\frac 15+3\alpha}.
\end{align*}
\end{lemma}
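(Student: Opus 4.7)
The approach I would take is to stratify the sum by $d = (r,q)$. Every $r$ with $1 \leq r \leq q-1$ can be written uniquely as $r = ds$ with $d \mid q$, $1 \leq s \leq (q-1)/d$, and $(s,q/d) = 1$, which recasts the sum as
$$\sum_{\substack{1\leq r\leq q-1\\ (r,q)>q^{\frac 15-\alpha}}}\frac 1r = \sum_{\substack{d \mid q\\ d > q^{\frac 15-\alpha}}} \frac{1}{d} \sum_{\substack{1\leq s\leq (q-1)/d\\ (s,q/d)=1}}\frac{1}{s}.$$

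Both factors are now standard to estimate. For the inner sum, I would drop the coprimality condition and invoke the partial harmonic estimate to obtain an $O(\log q)$ bound. For the outer sum, I would apply the divisor bound $\tau(q) \ll q^{\epsilon'}$ (for any fixed $\epsilon' > 0$) together with the lower bound $d > q^{1/5-\alpha}$ to get
$$\sum_{\substack{d \mid q\\ d > q^{\frac 15-\alpha}}}\frac{1}{d} \leq \tau(q)\, q^{-(\frac 15-\alpha)} \ll q^{-\frac 15 + \alpha + \epsilon'}.$$

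Multiplying the two estimates yields $q^{-1/5 + \alpha + \epsilon'}\log q$, and the final step is to absorb both $q^{\epsilon'}$ and $\log q$ into an extra factor of $q^{2\alpha}$, which is permissible for $q$ sufficiently large in terms of $\alpha$ and $\epsilon'$ (since $\log q \cdot q^{\epsilon'} = q^{o(1)}$). I do not anticipate any genuine obstacle here; the argument is essentially three lines, and the only bookkeeping point is to verify that the slack between the hypothesis exponent $\frac 15 - \alpha$ and the conclusion exponent $\frac 15 - 3\alpha$ comfortably accommodates the $\tau(q)\log q$ loss, which it does.
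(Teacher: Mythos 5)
Your proof is correct, and it is noticeably cleaner than the one in the paper, though it rests on the same underlying fact. You stratify immediately by $d=(r,q)$, write $r=ds$, separate the resulting double sum into an inner harmonic sum of size $O(\log q)$ and an outer sum over divisors $d>q^{1/5-\alpha}$ of $q$, and then invoke the standard divisor bound $\tau(q)\ll_{\delta} q^{\delta}$ to absorb the losses into $q^{2\alpha}$. The paper instead proceeds by a double partial summation: it first bounds the counting function $\#\{r\le y:(r,q)>q^{1/5-\alpha}\}$ by $\sum_{j\mid q,\, j>q^{1/5-\alpha}} y/j$, and to control the number of such divisors it takes a detour through Dickman's theorem on smooth numbers (observing that the worst case is $q$ smooth, so divisors of $q$ are $K\log q$-smooth and hence there are $\ll j^{o(1)}$ of them below $j$). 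This is, in effect, a hands-on re-derivation of the same $\tau(q)\ll q^{o(1)}$ bound that you simply cite. A second partial summation then converts the count back into the $\sum 1/r$ estimate. Both arguments land in the same place with the same $3\alpha$ of slack; yours reaches it in fewer steps and without the smooth-number machinery, and I see no gap in it. The only bookkeeping point worth making explicit (which you essentially do) is that the implicit constant in the final $\ll$ is permitted to depend on $\alpha$, which is fine since $\alpha$ is fixed once $\epsilon$ is.
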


\begin{proof}
First, for a given $y<q$, the number of $r<y$ in this summand can be bounded with
$$\#\{1\leq r\leq y:q^{\frac 15-\alpha}<(r,q)\leq q-1\}\leq 2\sum_{\stackrel{j|q}{q^{\frac 15-\alpha}<j\leq y}}\frac yj.$$
Note that this sum over $j$ will be largest when $q$ is smooth.  Specifically, this will be maximized when all of the prime factors of $q$ are less than $K\log q$ for some constant $K$.

So let $\Phi(y,z)$ denote the number of $z$-smooth natural numbers less than $y$.  By Dickman's theorem \cite{Di}, we know that for $u=\log y/\log z$, we have
$\Phi(y,z)\ll yu^{-u}$.
So for any $y\geq q^{\frac 15-\alpha}$,
\begin{align*}\Phi(y,K\log q)\ll & y\left(\frac{\log y}{(1+o(1))\log \log q}\right)^{-\frac{\log y}{(1+o(1))\log\log q}}\\
=&y\left(y^{-\frac{\log\log y-\log \log \log q}{(1+o(1))\log\log q}}\right)\\
=&y^{o(1)}\end{align*}
since $\log\log y/\log \log q=1+o(1)$.  Hence by partial summation,
\begin{align*}
2\sum_{\stackrel{j|q}{q^{\frac 15-\alpha}<r\leq y}}\frac yj=&2y\left(\sum_{q^{\frac 15-\alpha}<j<y}\left(\frac 1j-\frac{1}{j+1}\right)\sum_{\stackrel{j'|q}{q^{\frac 15-\alpha}<j'<j}}1+\frac{1}{q-1}\sum_{\stackrel{j'|q}{q^{\frac 15-\alpha}<j\leq y}}1\right)\\
\ll &2y\left(\sum_{q^{\frac 15-\alpha}<j<q}\frac{j^{o(1)}}{j^{2}}+\frac{y^{o(1)}}{q}\right)\\
\ll &yq^{-\frac 15+\alpha}y^{o(1)}\\
\ll &yq^{-\frac 15+2\alpha}.
\end{align*}
So again by partial summation,
\begin{align*}
\sum_{\stackrel{1\leq r\leq q-1}{(r,q)>q^{\frac 15-\alpha}}}\frac 1r=&\sum_{q^{\frac 15-\alpha}\leq y<q-1}\left(\frac 1y-\frac{1}{y+1}\right)\sum_{\stackrel{q^{\frac 15-\alpha}\leq r\leq y}{(r,q)>q^{\frac 15-\alpha}}}1+\frac{1}{q-1}\sum_{\stackrel{q^{\frac 15-\alpha}\leq r\leq q-1}{(r,q)>q^{\frac 15-\alpha}}}1\\
\ll & q^{-\frac 15+2\alpha}\sum_{q^{\frac 15-\alpha}\leq y<q-1}\frac 1y+q^{-\frac 15+2\alpha}\\
\ll &q^{-\frac 15+3\alpha}.
\end{align*}
\end{proof}

\section{Kloosterman Sums Over Primes}\label{Kloosec2}

We next apply the Kloosterman results in Lemmas \ref{FSbound} and \ref{Irbound} to evaluate binary sums involving primes in arithmetic progressions.

For ease of notation, for some $M,N$ with $MN\leq x$ and some $M_1\leq M$ and $N_1\leq N$, define the intervals
\begin{gather*}
\mathcal M=[M,M+M_1],\\
\mathcal N=[N,N+N_1].
\end{gather*}
Turning our attention now to sums over primes, we have the following:
\begin{lemma}
For any $\varepsilon>0$, let $q<x^{\frac{17}{32}-\varepsilon}$, and let $a$ and $b$ be natural numbers such that $(a,q)=1$ and $(b,D)=1$.  For $M$, $N$, $M_1$, and $N_1$ as defined above, let $\frac{x}{q^{1+\varepsilon}}\leq M\leq \sqrt x$.  Then
$$\sum_{\substack{p\in \mathcal M\\ p\equiv b \pmod D}}\sum_{\substack{n\in \mathcal N\\ np\equiv a\pmod q}}1=\frac{1}{q}\sum_{\substack{p\in \mathcal M \\ p\equiv a \pmod D}}\sum_{n\in \mathcal N}1+O\left(x^{\frac{15}{32}+\kappa'}+x^\frac 13q^{\frac 14+\kappa'}\right)$$
for any $\kappa'>0$.
\end{lemma}
\begin{proof}
Again writing the congruence with exponential sums:
\begin{align*}
\sum_{\substack{p\in \mathcal M\\ p\equiv b \pmod D}}\sum_{\substack{n\in \mathcal N\\ np\equiv a\pmod q}}1=&\frac{1}{Dq}\sum_{r=1}^{q}\sum_{s=1}^D\sum_{p\in \mathcal M}e_q\left(\bar p ra\right)e_D\left(s(b\bar p-1)\right)\sum_{n\in \mathcal N}e_q\left(-nr\right)\\
\end{align*}
We define $N_0$ to be the term with $r=q$.  In other words,
$$N_0=\frac{1}{Dq}\sum_{s=1}^D\sum_{p\in \mathcal M}e_D\left(s(b\bar p-1)\right)\sum_{n\in \mathcal N}1=\frac 1q\sum_{\substack{p\in \mathcal M\\ p\equiv b\pmod D}}\sum_{n\in \mathcal N}1.$$
So
\begin{align*}
\bigg{(}\sum_{\substack{p\in \mathcal M\\ p\equiv b \pmod D}}&\sum_{\substack{n\in \mathcal N\\ np\equiv a\pmod q}}1\bigg{)}-N_0\\
=&\frac{1}{Dq}\sum_{r=1}^{q-1}\sum_{s=1}^D\sum_{p\in \mathcal M}e_q\left(\bar p ra\right)e_D\left(s(b\bar p-1)\right)\sum_{n\in \mathcal N}e_q\left(-nr\right)\\
\ll &\frac{1}{D}\sum_{r=1}^{q-1}\sum_{s=1}^D\frac{1}{||r||}\left|\sum_{p\in \mathcal M}e_{\frac{Dq}{(D,q)}}\left(\bar p \left(\frac{D}{(D,q)}ra+\frac{q}{(D,q)}sb\right)\right)\right|\\
\end{align*}
Let $R$ denote the set of $r$ for which $1\leq r<q$ and
$$\left(r,q\right)>q^{\frac 15-\alpha}.$$
Then
\begin{align*}
\bigg{(}\sum_{\substack{p\in \mathcal M\\ p\equiv b \pmod D}}&\sum_{\substack{n\in \mathcal N\\ np\equiv a\pmod q}}1\bigg{)}-N_0\\
\ll &\frac{1}{D}\sum_{r\in R}\sum_{s=1}^D\frac{1}{||r||}\left|\sum_{p\in \mathcal M}e_{\frac{Dq}{(D,q)}}\left(\bar p \left(\frac{D}{(D,q)}ra+\frac{q}{(D,q)}sb\right)\right)\right|\\
&+\frac 1D\sum_{r\not \in R}\sum_{s=1}^D\frac{1}{||r||}\left|\sum_{p\in \mathcal M}e_{\frac{Dq}{(D,q)}}\left(\bar p \left(\frac{D}{(D,q)}ra+\frac{q}{(D,q)}sb\right)\right)\right|.
\end{align*}
For the sum over $R$, we have
\begin{align*}
\frac{1}{D}&\sum_{r\in R}\sum_{s=1}^D\frac{1}{||r||}\left|\sum_{p\in \mathcal M}e_{\frac{Dq}{(D,q)}}\left(\bar p \left(\frac{D}{(D,q)}ra+\frac{q}{(D,q)}sb\right)\right)\right|
\leq &2M_1\sum_{\substack{r\in R \\ r\leq \frac{q}{2}}}\frac{1}{r}
\ll & M_1q^{-\frac 15+3\alpha}
\end{align*}
by the previous lemma.

For the remaining sum, define
$$j=\left(\frac{D}{(D,q)}ra+\frac{q}{(D,q)}sb,q\right),$$
and write $j=j'j''$ where $j'|\frac{q}{(D,q)}$ and $$\left(j'',\frac{q}{j'(D,q)}\right)=1.$$
Since $j''$, we know that $j''$ must divide $(D,q)$, and hence $j''\leq D$.

Meanwhile, since $j'$ divides $\frac{q}{(D,q)}$, it must also divide $\frac{D}{(D,q)}ra$.  However, $a$ is relatively prime $q$ by assumption and $(r,q)\leq q^{\frac 15-\alpha}$.  Since $j'|q$, this means that $j'\leq Dq^{\frac 15-\alpha}$.  Thus
$$j\leq D^2q^{\frac 15-\alpha}.$$
Thus, the reduced fraction in the exponential term $e_{\frac{Dq}{(D,q)}}\left(\bar p \left(\frac{D}{(D,q)}ra+\frac{q}{(D,q)}sb\right)\right)$ must have denominator at least
$$\frac{Dq}{(q,D)j}\geq \frac{q}{j}\geq \frac{q^{\frac 45+\alpha}}{D^{2}}.$$
Since
$$(qD)^\frac 34<\frac{x}{q^{1+\varepsilon}}<\sqrt x\leq \left(\frac{q^{\frac 45+\alpha}}{D^{2}}\right)^\frac 43,$$
we can apply Theorem \ref{FSbound}, finding that
\begin{align*}
\frac 1D&\sum_{r\not \in R}\sum_{s=1}^D\frac{1}{||r||}\left|\sum_{p\in \mathcal M}e_{\frac{Dq}{(D,q)}}\left(\bar p \left(\frac{D}{(D,q)}ra+\frac{q}{(D,q)}sb\right)\right)\right|\\
\ll & \log q\left(M^{\frac{15}{16}+\kappa}+M^{\frac 23}q^{\frac 14+\kappa}\right)\\
\ll &x^{\frac{15}{32}+\kappa'}+q^\frac 14x^{\frac 13+\kappa'},
\end{align*}
since $M\leq \sqrt x$.
\end{proof}
We have a similar, though slightly stronger, result if we take a sum over $q\sim Q$:
\begin{lemma}\label{Irvcor}
For any $\varepsilon>0$, let $Q<x^{\frac{17}{32}-\varepsilon}$.  Let $\frac{x}{Q^{1+\varepsilon}}\leq M\leq \sqrt x$, and let $M_1$, $N_1$, $N$, $a$, and $b$ be as before.  Then
$$\sum_{q\sim Q}\left|\sum_{\substack{p\in \mathcal M\\ p\equiv b \pmod D}}\sum_{\substack{n\in \mathcal N\\ np\equiv a\pmod q}}1-\frac 1q\sum_{\substack{p\in \mathcal M\\ p\equiv b \pmod D}}\sum_{n\in \mathcal N}1\right|\ll \left(Q^\frac 54x^\frac{5}{16}+Qx^\frac{9}{20}+Q^\frac 76x^{\frac{13}{36}}\right)x^{\kappa'}$$
for any $\kappa'>0$.
\end{lemma}
\begin{proof}
The proof is nearly identical to the previous theorem except that we apply Theorem \ref{Irbound} instead of \ref{FSbound}.
\end{proof}
We can also enlarge the intervals without changing the result:
\begin{corollary}\label{Kloocor}
For any $\varepsilon>0$, let $q<x^{\frac{17}{32}-\varepsilon}$, and let $a$ and $b$ be as before.  Then
$$\sum_{\substack{\frac{x}{q^{1+\alpha}}\leq p\leq \sqrt x \\ p\equiv b \pmod D}}\sum_{\substack{\sqrt x\leq n\leq \frac xp\\ np\equiv a\pmod q}}1=\frac{1}{q}\sum_{\substack{\frac{x}{q^{1+\alpha}}\leq p\leq \sqrt x \\ p\equiv b \pmod D}}\sum_{\sqrt x\leq n\leq \frac xp}1+O\left(x^{\frac{15}{32}+\kappa'}+x^\frac 13q^{\frac 14+\kappa'}\right)$$
for any $\kappa'>0$.
\end{corollary}

\begin{corollary}\label{KloocorIr}
For any $\varepsilon>0$, let $Q<x^{\frac{17}{32}-\varepsilon}$, and let $a$ and $b$ be as before.  Then
$$\sum_{q\sim Q}\left|\sum_{\substack{\frac{x}{Q^{1+\alpha}}\leq p\leq \sqrt x \\ p\equiv b \pmod D}}\sum_{\substack{\sqrt x\leq n\leq \frac xp\\ np\equiv a\pmod q}}1-\frac{1}{q}\sum_{\substack{\frac{x}{q^{1+\alpha}}\leq p\leq \sqrt x \\ p\equiv b \pmod D}}\sum_{\sqrt x\leq n\leq \frac xp}1\right|\ll \left(Q^\frac 54x^\frac{5}{16}+Qx^\frac{9}{20}+Q^\frac 76x^{\frac{13}{36}}\right)x^{\kappa'}$$
for any $\kappa'>0$.
\end{corollary}
Splitting the regions into dyadic intervals proves the corollaries.

\section{The Sum $T_7$}\label{T7sec}
Now, we can turn our attention to $T_7$.  We have the following:

\begin{theorem}For $q<x^\frac{30}{59}$, there exists a $\rho>0$ such that
$$T_7\ll \frac{x}{q}L(1,\chi)\log^2 x\log^4 D+\frac{x^{1-\rho}}{q}.$$
\end{theorem}
Note that this is inequality is actually slightly better than what is claimed in Theorem \ref{Tsforall}, where we wrote $\log^6 x$ instead of $\log^2 x\log^4 D$.  This is of little import if $\frac{\log x}{\log D}\ll 1$ but could yield some marginal savings if $\frac{\log D}{\log x}=o(1)$.  However, since the bound on $T_6$ in Theorem \ref{T6Bd} had a $\log^5x$, we see that the potential savings from this insight is at most a single power of log in Theorems \ref{MainTheoremi} and \ref{MainTheoremii}, and hence we do not pursue this further.

\begin{proof}
Here, we have $d_s,m_s\leq D^\alpha$.  We also have $$\frac{x}{q^{1+\alpha}}\leq d'd_s,m'm_s\leq q^{1+\alpha},$$since the triple is contained in neither $I_2$ nor $I_3$.  This means that
$$m_1\leq \frac{x}{\left(\frac{x}{q^{1+\alpha}}\right)^2}=\frac{q^{2+2\alpha}}{x}.$$

Moreover, we know that $d'$ and $m'$ have no factors between $D^2$ and $x^{\frac{3}{10}}$.  So let $j$ denote the largest integer such that $j|d'$ and $j\leq D^2$.  Then $\frac{d'}{j}$ has no factors less than $x^{\frac{3}{10}}D^{-2}$ besides 1.  Since $d'<x^{\frac{3}{5}}D^{-4}$, this means that $\frac{d'}{j}$ has no factors on the interval $\bigg{(}1,\sqrt{\frac{d'}{j}}\bigg{]}$, which implies that $\frac{d'}{j}$ is prime.

Thus, we write $d'=p_1j$ and $m'=p_2l$, where $j,l\leq D^2$ and $$\frac{x}{q^{1+\alpha}D^{2+\alpha}}\leq p_1,p_2\leq q^{1+\alpha}.$$We will assume that $p_1\leq p_2$; the proof will work the same if $p_2<p_1$.  Hence $$p_1\leq \sqrt{\frac{x}{m_1d_sm_sjl}}.$$
Now,
\begin{align*}
T_7\ll &\sum_{m_1\leq \frac{q^{2+2\alpha}}{x}}\mathop{\sum\sum}\limits_{d_s,m_s\leq D^\alpha}\lambda(d_s)\lambda(m_s)\mathop{\sum\sum}\limits_{l,j\leq D^2}\lambda(l)\lambda(j)\\
&\cdot\sum_{\frac{x}{q^{1+\alpha}D^{2+\alpha}}\leq p_1\leq \sqrt{\frac{x}{m_1d_sm_sjl}}} \lambda(p_1)\mathop{\sum}\limits_{\substack{p_1\leq p_2\leq \frac{x}{m_1m_sd_sjlp_1}\\ p_1p_2\equiv a\overline{jlm_1m_sd_s}\pmod q}}\lambda(p_2),
\end{align*}
where we know that $$\frac{x}{m_1m_sd_sjlp_1}\leq q^{1+\alpha}.$$

We will ignore the outer terms for the moment and focus on the sum over $p_1$ and $p_2$.  To this end, write
$$Y=m_1m_sd_sjl$$
and
$$R=\frac{x}{q^{1+\alpha}D^{2+\alpha}}.$$
So we consider the sum
$$\sum_{R\leq p_1\leq \sqrt{\frac{x}{Y}}} \lambda(p_1)\mathop{\sum}\limits_{\substack{p_1\leq p_2\leq \frac{x}{Yp_1}\\ p_1p_2\equiv a\overline{Y}\pmod q}}\lambda(p_2).$$

Clearly, $\lambda(p)\leq 2$, and $\lambda(p)$ is non-zero only if $\chi(p)=1$ or 0.  Since both of the $p_i$ are primes that are much larger than $D$, we know that $\chi(p_i)\neq 0$ for $i=1,2$.  So
\begin{align*}\sum_{R\leq p_1\leq \sqrt{\frac{x}{Y}}} \lambda(p_1)\mathop{\sum}\limits_{\substack{p_1\leq p_2\leq \frac{x}{Yp_1}\\ p_1p_2\equiv a\overline{Y}\pmod q}}\lambda(p_2)\ll \sum_{\substack{0<b<D \\ \chi(b)=1}}\sum_{\substack{R\leq p_1\leq \sqrt{\frac{x}{Y}}\\ p_1\equiv b\pmod D}} \mathop{\sum}\limits_{\substack{p_1\leq r\leq \frac{x}{Yp_1}\\ p_1r\equiv a\overline{Y}\pmod q}}1,
\end{align*}
where the switch in notation from $p_2$ to $r$ indicates that we have dropped the requirement that $p_2$ be prime.  From Corollary \ref{Kloocor},
\begin{align*}
\sum_{\substack{0<b<D \\ \chi(b)=1}}&\sum_{\substack{R\leq p_1\leq \sqrt{\frac{x}{Y}}\\ p_1\equiv b\pmod D}} \mathop{\sum}\limits_{\substack{p_1\leq r\leq \frac{x}{Yp_1}\\ p_1r\equiv a\overline{Y}\pmod q}}1\\
&=\frac 1q\sum_{\substack{0<b<D \\ \chi(b)=1}}\sum_{\substack{R\leq p_1\leq \sqrt{\frac{x}{Y}}\\ p_1\equiv b\pmod D}} \mathop{\sum}\limits_{p_1\leq r\leq \frac{x}{Yp_1}}1+O\left(x^{\kappa'}\left(\left(\frac{x}{Y}\right)^{\frac{15}{32}}+q^\frac 14\left(\frac{x}{Y}\right)^{\frac{1}{3}}\right)\right)\\
&\ll \frac{1}{2q}\sum_{\substack{R\leq p_1\leq \sqrt{\frac{x}{Y}}}}\lambda(p_1)\mathop{\sum}\limits_{p_1\leq r\leq \frac{x}{Yp_1}}1+O\left(x^{\kappa'}\left(\left(\frac{x}{Y}\right)^{\frac{15}{32}}+q^\frac 14\left(\frac{x}{Y}\right)^{\frac{1}{3}}\right)\right)\\
\end{align*}
For the double sum, we have
\begin{align*}
\frac{1}{2q}\sum_{\substack{R\leq p_1\leq \sqrt{\frac{x}{Y}}}}\lambda(p_1)\mathop{\sum}\limits_{p_1\leq r\leq \frac{x}{Yp_1}}1\ll &\frac{x}{2qY}\sum_{\substack{R\leq p_1\leq \sqrt{\frac{x}{Y}}}}\frac{\lambda(p_1)}{p_1}\ll \frac{x\log xL(1,\chi)}{2qY}.
\end{align*}
So
\begin{align}\label{T7}
T_7\ll &\sum_{m_1\leq \frac{q^{2+2\alpha}}{x}}\mathop{\sum\sum}\limits_{d_s,m_s\leq D^\alpha}\lambda(d_s)\lambda(m_s)\mathop{\sum\sum}\limits_{l,j\leq D^2}\lambda(l)\lambda(j)\left[\frac{x\log xL(1,\chi)}{2qY}+x^{\kappa'}\left(\left(\frac{x}{Y}\right)^{\frac{15}{32}}+q^\frac 14\left(\frac{x}{Y}\right)^{\frac{1}{3}}\right)\right].
\end{align}
Beginning with the first of the bracketed terms, we can let $d_s=u^2$ and $m_s=v^2$, giving
\begin{align*}
\sum_{m_1\leq \frac{q^{2+2\alpha}}{x}}&\mathop{\sum\sum}\limits_{d_s,m_s\leq D^\alpha}\lambda(d_s)\lambda(m_s)\mathop{\sum\sum}\limits_{l,j\leq D^2}\lambda(l)\lambda(j)\frac{x\log xL(1,\chi)}{2qY}\\
\ll &\frac{xL(1,\chi)\log x}{q}\sum_{m_1\leq \frac{q^{2+2\alpha}}{x}}\frac{1}{m_1}\mathop{\sum\sum}\limits_{u,v\leq D^\frac \alpha 2}\frac{1}{u^{2-\alpha}v^{2-\alpha}}\mathop{\sum\sum}\limits_{l,j\leq D^2}\frac{\tau(l)\tau(j)}{lj}\\
\ll &\frac{xL(1,\chi)\log^2 x\log^4 D}{q}.
\end{align*}
For the remaining terms of $T_7$, we note that $\frac{\lambda(n)}{n^\frac 13}\ll 1$ for any $n$, and hence
\begin{align*}
\sum_{m_1\leq \frac{q^{2+2\alpha}}{x}}&\mathop{\sum\sum}\limits_{d_s,m_s\leq D^\alpha}\lambda(d_s)\lambda(m_s)\mathop{\sum\sum}\limits_{l,j\leq D^2}\lambda(l)\lambda(j)x^{\kappa'}\left(\left(\frac{x}{Y}\right)^{\frac{15}{32}}+q^\frac 14\left(\frac{x}{Y}\right)^{\frac{1}{3}}\right)\\
\ll & \sum_{m_1\leq \frac{q^{2+2\alpha}}{x}}\mathop{\sum\sum}\limits_{u,v\leq D^\frac \alpha 2}\mathop{\sum\sum}\limits_{l,j\leq D^2}x^{\kappa'}\left(\left(\frac{x}{m_1}\right)^{\frac{15}{32}}+q^\frac 14\left(\frac{x}{m_1}\right)^{\frac{1}{3}}\right)\\
\ll & D^{4+\alpha}x^{\kappa'}q^{2\alpha}\left(x^{-\frac{1}{16}}q^{\frac{17}{16}}+q^\frac{19}{12}x^{-\frac 13}\right)\\
=&D^{4+\alpha}x^{\kappa'}q^{2\alpha}\frac xq\left(\left(\frac{q^\frac{33}{17}}{x}\right)^\frac{17}{16}+\left(\frac{q^{\frac{31}{16}}}{x}\right)^\frac{4}{3}\right)
\end{align*}
Since $\theta<\frac{30}{59}$, the term above is clearly
$$\ll \frac{x^{1-\rho}}{q}$$
for some positive value of $\rho$.
\end{proof}

\begin{theorem} For $Q<x^\frac{16}{31}$, there exists a $\rho>0$ such that
$$\sum_{q\sim Q}\max_{(a,q)=1}\left|T_7(a,q)\right|\ll xL(1,\chi)\log^2 x\log^4 D+x^{1-\rho}.$$
\end{theorem}
\begin{proof}
The proof is nearly identical to the previous one.  We apply Corollary \ref{KloocorIr} to find that the analogous estimate to (\ref{T7}) is now
\begin{align*}
\ll &\sum_{q\sim Q}\sum_{m_1\leq \frac{q^{2+2\alpha}}{x}}\mathop{\sum\sum}\limits_{d_s,m_s\leq D^\alpha}\lambda(d_s)\lambda(m_s)\mathop{\sum\sum}\limits_{l,j\leq D^2}\lambda(l)\lambda(j)\\
&\cdot \left[\frac{x\log xL(1,\chi)}{2qH}+x^{\kappa'}\left(Q^\frac 54\left(\frac{x}{H}\right)^\frac{5}{16}+Q\left(\frac{x}{H}\right)^\frac{9}{20}+Q^\frac 76\left(\frac{x}{H}\right)^{\frac{13}{36}}\right)\right].
\end{align*}
The sums involving $L(1,\chi)$ will now be
$$\ll xL(1,\chi)\log^2 x\log^4 D.$$
Meanwhile, the remaining sums are
\begin{align*}\ll &\sum_{m_1\leq \frac{q^{2+2\alpha}}{x}}D^{4+\alpha}x^{\kappa'}\left(Q^\frac 54\left(\frac{x}{m_1}\right)^\frac{5}{16}+Q\left(\frac{x}{m_1}\right)^\frac{9}{20}+Q^\frac 76\left(\frac{x}{m_1}\right)^{\frac{13}{36}}\right).
\end{align*}
Summing over $m_1$ then gives
\begin{align*}\ll &D^{4+\alpha}x^{\kappa'}\left(Q^\frac 54\left(\frac{Q^2}{x}\right)^\frac{11}{16}+Qx^{\frac{9}{20}}\left(\frac{Q^2}{x}\right)^\frac{11}{20}+Q^\frac 76x^{\frac{13}{36}}\left(\frac{Q^2}{x}\right)^{\frac{23}{36}}\right)\\
\ll &D^{4+\alpha}x^{1+\kappa'}\left(\left(\frac{Q^\frac{21}{11}}{x}\right)^\frac{11}{8}+\left(\frac{Q^\frac{21}{11}}{x}\right)^\frac{11}{10} +\left(\frac{Q^\frac{44}{23}}{x}\right)^\frac{23}{18}\right).
\end{align*}
Since $Q<x^\frac{16}{31}$ and $\frac{16}{31}<\frac{23}{44}<\frac{11}{21}$, the above is $\ll x^{1-\rho}$ for some $\rho$.

\end{proof}

\section{The Sum $T_1$: Prelude and Edge Cases}\label{binsumsec}

The last step will be to prove the bound on $T_1$.  This is similar to Section 5 in \cite{FI03}, wherein the authors use results on ternary sums in arithmetic progressions to find a bound.  Here, we show how one can improve on these methods by proceeding more carefully in the treatment of these sums and by using a new result of Shparlinski \cite{Shp}.

As noted above, the primary constraint in improving the level of distribution is the equality
$$\sum_{\substack{dm_1\leq x \\ d>D^* \\ dm_1\equiv a\pmod q}}\lambda(d)=(1+o(1))\frac{1}{\phi(q)}\sum_{\substack{dm_1\leq x \\ d>D^* \\ (dm_1, q)=1}}\lambda(d).$$
Thus, we turn now to the theory of binary and ternary divisor sums.

We can write
$$\sum_{\substack{dm_1\leq x \\ d>D^* \\ dm_1\equiv a\pmod q}}\lambda(d)=\mathop{\sum\sum\sum}\limits_{\substack{d_1d_2m_1\leq x \\ d_1d_2>D^* \\ d_1d_2m_1\equiv a\pmod q}}\chi(d_1).$$
We will eventually split this into dyadic intervals.  To do this, however, it will help to eliminate the cases where two of the variables are very small and the third can be very large.  To this end, we first prove the following.

\begin{lemma}\label{easy1} For $\alpha$ as before and $q<x^{\frac 23-\alpha}$,
$$\mathop{\sum\sum}\limits_{\substack{d_1,d_2 \\ D^*<d_1d_2\leq \frac{x}{Dq^{1+\alpha}}}}\chi(d_1)\sum_{\substack{Dq^{1+\alpha}\leq m_1\leq \frac{x}{d_1d_2}\\m_1\equiv a\overline{d_1d_2}\pmod q}}1=\frac 1{\phi(q)}\mathop{\sum\sum}\limits_{\substack{d_1,d_2\\ D^*<d_1d_2\leq \frac{x}{Dq^{1+\alpha}}}}\chi(d_1)\sum_{\substack{Dq^{1+\alpha}\leq m_1\leq \frac{x}{d_1d_2}\\(d_1d_2m_1,q)=1}}1+O\left(\frac{x}{Dq^{1+\alpha}}\right),$$
and
$$\mathop{\sum\sum}\limits_{\substack{d_1,m_1\\ d_1m_1\leq \frac{x}{Dq^{1+\alpha}}}}\chi(d_1)\sum_{\substack{Dq^{1+\alpha}\leq d_2\leq \frac{x}{d_1m_1}\\d_2\equiv a\overline{d_1m_1}\pmod q}}1=\frac 1{\phi(q)}\mathop{\sum\sum}\limits_{\substack{d_1,m_1\\ d_1m_1\leq \frac{x}{Dq^{1+\alpha}}}}\chi(d_1)\sum_{\substack{Dq^{1+\alpha}\leq d_2\leq \frac{x}{d_1m_1}\\(d_1d_2m_1,q)=1}}1+O\left(\frac{x}{Dq^{1+\alpha}}\right).$$

\end{lemma}
\begin{proof}
We prove the first equality; the second is the same.  For $d_1, d_2$ with $(d_1d_2,q)=1$, the sum over $m_1$ can be rewritten as
$$\sum_{\substack{Dq^{1+\alpha}\leq m_1\leq \frac{x}{d_1d_2}\\m_1\equiv a\overline{d_1d_2}\pmod q}}1=\frac 1{\phi(q)}\sum_{\substack{Dq^{1+\alpha}\leq m_1\leq \frac{x}{d_1d_2}\\(m_1,q)=1}}1+O(1).$$
Thus
\begin{align*}\mathop{\sum\sum}\limits_{\substack{d_1,d_2\\ d_1d_2\leq \frac{x}{Dq^{1+\alpha}}}}\chi(d_1)\sum_{\substack{Dq^{1+\alpha}\leq m_1\leq \frac{x}{d_1d_2}\\m_1\equiv a\overline{d_1d_2}\pmod q}}1=&\mathop{\sum\sum}\limits_{\substack{d_1,d_2\\ d_1d_2\leq \frac{x}{Dq^{1+\alpha}}\\ (d_1d_2,q)=1}}\chi(d_1)\left(\frac 1{\phi(q)}\sum_{\substack{Dq^{1+\alpha}\leq m_1\leq \frac{x}{d_1d_2}\\(m_1,q)=1}}1+O\left(1\right)\right)\\
=&\frac 1{\phi(q)}\mathop{\sum\sum}\limits_{\substack{d_1,d_2\\ d_1d_2\leq \frac{x}{Dq^{1+\alpha}}}}\chi(d_1)\sum_{\substack{Dq^{1+\alpha}\leq m_1\leq \frac{x}{d_1d_2}\\(d_1d_2m_1,q)=1}}1+O\left(\frac{x}{Dq^{1+\alpha}}\right).
\end{align*}
\end{proof}

For the next important sum (and for future reference), it will be helpful to know that relative primality conditions will have little impact on our sums of characters.  To this end, we recall the identity that
\begin{gather}\label{chiId}
\sum_{n\leq y}\frac{\chi(n)}{n}=L(1,\chi)+O\left(\frac Dy\right).
\end{gather}
This can be easily verified by noting that
$$\sum_{n\leq y}\frac{\chi(n)}{n}=L(1,\chi)-\lim_{z\to \infty}\sum_{y<n\leq z}\frac{\chi(n)}{n},$$
and then applying partial summation to the latter sum.

With this identity, we prove the following two lemmas:
\begin{lemma}\label{chiId2} Let $0<\kappa<\frac 14$ and $A>1$.  Moreover, let $D$, $s$, and $y$ be natural numbers where $y^\kappa<s<y^A$, $D<y^{1-2\kappa}$, and $(D,s)=1$.  Then for any $\varepsilon>0$,
$$\sum_{\substack{n\leq y \\ (n,s)=1}}\chi(n)\ll Ds^{\varepsilon},$$
and
$$\sum_{\substack{n\leq y \\ (n,s)=1}}\frac{\chi(n)}{n}\ll L(1,\chi)\log y.$$
\end{lemma}

\begin{proof}
As usual, we let $\omega(n)$ denote as usual the number of distinct prime factors of $n$.  Then by inclusion/exclusion,
$$\sum_{\substack{n\leq y \\ (n,s)=1}}\chi(n)=\sum_{r|s}(-1)^{\omega(r)}\mu(r)^2\sum_{n'\leq \frac yr}\chi(r)\chi(n')\ll D\tau(s)\ll Ds^{\varepsilon}.$$
For the latter expression,
\begin{align*}
\sum_{\substack{n\leq y \\ (n,s)=1}}\frac{\chi(n)}{n}=&\sum_{r|s}(-1)^{\omega(r)}\mu(r)^2\sum_{n'\leq \frac yr}\frac{\chi(r)}{r}\frac{\chi(n')}{n'}\\
=&\sum_{\substack{r|s \\ r\leq y^{\frac \kappa 2}}}\frac{(-1)^{\omega(r)}\mu(r)^2\chi(r)}{r}\sum_{n'\leq \frac yr}\frac{\chi(n')}{n'}+\sum_{\substack{r|s \\ r>y^{\frac \kappa 2}}}\frac{(-1)^{\omega(r)}\mu(r)^2\chi(r)}{r}\sum_{n'\leq \frac yr}\frac{\chi(n')}{n'}.\end{align*}
By (\ref{chiId}),
\begin{align*}
\sum_{\substack{r|s \\ r\leq y^{\frac \kappa 2}}}\frac{(-1)^{\omega(r)}\mu(r)^2\chi(r)}{r}\sum_{n'\leq \frac yr}\frac{\chi(n')}{n'}=&\sum_{\substack{r|s \\ r\leq y^{\frac \kappa 2}}}\frac{(-1)^{\omega(r)}\mu(r)^2\chi(r)}{r}\left(L(1,\chi)+O\left(\frac{Dr}{y}\right)\right)\\
\ll &L(1,\chi)\log y+\frac{Ds^\varepsilon\log y}{y}\end{align*}
for any $\varepsilon>0$.  Meanwhile,
$$\sum_{\substack{r|s \\ r>y^{\frac \kappa 2}}}\frac{(-1)^{\omega(r)}\mu(r)^2\chi(r)}{r}\sum_{n'\leq \frac yr}\frac{\chi(n')}{n'}\ll \frac{\tau(s)}{y^{\frac \kappa 2}}\sum_{n'\leq y}\frac{1}{n}\ll \frac{\log y}{y^{\frac \kappa 3}}.$$
The $L(1,\chi)\log y$ term is clearly the largest, and hence the lemma follows.
\end{proof}
\begin{lemma} \label{chiId3}Let $\kappa$, $A$, $s$, $D$, and $y$ be as in the previous lemma, and let $z>y$.  Then for any $\varepsilon>0$,
$$\sum_{\substack{y<n\leq z \\ (n,s)=1}}\frac{\chi(n)}{n}\ll \frac{Ds^\varepsilon}{y}.$$
\end{lemma}
\begin{proof}
Proceeding as in the last lemma,
\begin{align*}
\sum_{\substack{y<n\leq z \\ (n,s)=1}}\frac{\chi(n)}{n}=&\sum_{r|s}\frac{(-1)^{\omega(r)}\mu(r)^2\chi(r)}{r}\sum_{\frac yr<n'\leq \frac zr}\frac{\chi(n')}{n'}\\
\ll &\sum_{r|s}\frac{1}{r}\left(\frac{Dr}{y}\right)\ll \frac{D\tau(s)}{y}\ll \frac{Ds^\varepsilon}{y}.
\end{align*}

\end{proof}

In many of the upcoming lemmas, we will work mod $Dq$ instead of $q$, since this will make it easier to handle $\chi$.  We discuss the reasons for this change a bit more the next section.  However, it is easy to convert these back to statements about mod $q$, as we will see below.
\begin{lemma}\label{easy2} Let $\sqrt x\leq q<x^{\frac 23-\alpha}$.  Then
\begin{gather}\label{bigd0}\mathop{\sum\sum}\limits_{\substack{d_2,m_1\\ d_2m_1\leq \frac{x}{Dq^{1+\alpha}}}}\sum_{\substack{Dq^{1+\alpha}\leq d_1\leq\frac{x}{d_2m_1}\\d_1\equiv a\overline{d_2m_1}\pmod{Dq}}}\chi(d_1)\ll \frac{1}{Dq}L(1,\chi)^2\log^2 x+\frac{x}{Dq^{1+\alpha}}.\end{gather}
Consequently,
\begin{gather}\label{bigd1}\mathop{\sum\sum}\limits_{\substack{d_2,m_1\\ d_2m_1\leq \frac{x}{Dq^{1+\alpha}}}}\sum_{\substack{Dq^{1+\alpha}\leq d_1\leq \frac{x}{d_2m_1}\\d_1\equiv a\overline{d_2m_1}\pmod{q}}}\chi(d_1)=\frac{1}{\phi(q)}\mathop{\sum\sum}\limits_{\substack{d_2,m_1\\ d_2m_1\leq \frac{x}{Dq^{1+\alpha}}}}\sum_{\substack{Dq^{1+\alpha}\leq d_1\leq\frac{x}{d_2m_1}\\(d_1d_2m_1,q)=1}}\chi(d_1)+O\left(\frac{x}{q}L(1,\chi)^2\log^2x \right).\end{gather}
\end{lemma}

\begin{proof}
Beginning with the first claim, the congruence gives $d_1\equiv a\overline{d_2m_1}\pmod{D}$.  Additionally, $\overline{\chi(n)}=\chi(n)$, since $\chi$ is real.  So $\chi(d_1)=\chi(ad_2m_1)$.  Plugging this in:
\begin{align*}
\mathop{\sum\sum}\limits_{\substack{d_2,m_1\\ d_2m_1\leq \frac{x}{Dq^{1+\alpha}}}}&\sum_{\substack{Dq^{1+\alpha}\leq d_1\leq\frac{x}{d_2m_1}\\d_1\equiv a\overline{d_2m_1}\pmod{Dq}}}\chi(d_1)=\chi(a)\mathop{\sum\sum}\limits_{\substack{d_2,m_1\\ d_2m_1\leq \frac{x}{Dq^{1+\alpha}}}}\chi(m_1)\chi(d_2)\sum_{\substack{Dq^{1+\alpha}\leq d_1\leq\frac{x}{d_2m_1}\\d_1\equiv a\overline{d_2m_1}\pmod{Dq}}}1.
\end{align*}
We can then sum over $d_1$.  Note that the presence of the character now means that we do not have to be explicit about the relative primality between $d_2m_1$ and $D$, since $\chi(d_2m_1)=0$ if they are not relatively prime.  So the above is
\begin{align*}
=&\chi(a)\mathop{\sum\sum}\limits_{\substack{d_2,m_1\\ d_2m_1\leq \frac{x}{Dq^{1+\alpha}}\\(d_2m_1,q)=1}}\chi(m_1)\chi(d_2)\left( \frac{x}{Dqd_2m_1}+O(1)\right)\\
=&\frac{x\chi(a)}{Dq}\mathop{\sum\sum}\limits_{\substack{d_2,m_1\\ d_2m_1\leq \frac{x}{Dq^{1+\alpha}}\\(d_2m_1,q)=1}}\frac{\chi(m_1)}{m_1}\frac{\chi(d_2)}{d_2}+O\left(\frac{x}{Dq^{1+\alpha}}\right).
\end{align*}
By Dirchlet's hyperbola trick and the symmetry of the sum,
\begin{align*}
\mathop{\sum\sum}\limits_{\substack{d_2,m_1\\ d_2m_1\leq \frac{x}{Dq^{1+\alpha}}\\(d_2m_1,q)=1}}\frac{\chi(d_2)}{d_2}\frac{\chi(m_1)}{m_1}=&2\sum_{\substack{d_2\leq \sqrt{\frac{x}{Dq^{1+\alpha}}}\\(d_2,q)=1}}\frac{\chi(d_2)}{d_2}\sum_{\substack{m_1\leq \frac{x}{Dq^{1+\alpha}d_2}\\(m_1,q)=1}}\frac{\chi(m_1)}{m_1}-\left(\sum_{\substack{d_2\leq \sqrt{\frac{x}{Dq^{1+\alpha}}}\\(d_2,q)=1}}\frac{\chi(d_2)}{d_2}\right)^2
\end{align*}
We can increase the bound for $m_1$, thereby removing the dependence on $d_2$ on the cutoff for the sum over $m_1$.  By Lemmas \ref{chiId2} and \ref{chiId3}, for any $\varepsilon'>0$ the above is
\begin{align*}
=&2\sum_{\substack{d_2\leq \sqrt{\frac{x}{Dq^{1+\alpha}}}\\(d_2,q)=1}}\frac{\chi(d_2)}{d_2}\left(\sum_{\substack{m_1\leq x^{100}\\(m_1,q)=1}}\frac{\chi(m_1)}{m_1}+O\left(\frac{D^2q^{1+\alpha+\varepsilon'}d_2}{x}\right)\right)-\left(\sum_{\substack{d_2\leq \sqrt{\frac{x}{Dq^{1+\alpha}}}\\(d_2,q)=1}}\frac{\chi(d_2)}{d_2}\right)^2\\
=&2\sum_{\substack{d_2\leq \sqrt{\frac{x}{Dq^{1+\alpha}}}\\(d_2,q)=1}}\frac{\chi(d_2)}{d_2}\sum_{\substack{m_1\leq x^{100}\\(m_1,q)=1}}\frac{\chi(m_1)}{m_1}-\left(\sum_{\substack{d_2\leq \sqrt{\frac{x}{Dq^{1+\alpha}}}\\(d_2,q)=1}}\frac{\chi(d_2)}{d_2}\right)^2+O\left(\frac{D^\frac 32q^{\frac 12+\frac{\alpha}{2}+\varepsilon'}}{x^\frac 12}\right)\\
\ll &L(1,\chi)^2\log^2 x+\frac{D^\frac 32q^{\frac 12+\frac{\alpha}{2}+\varepsilon'}}{x^\frac 12},
\end{align*}
So
\begin{align*}
\mathop{\sum\sum}\limits_{\substack{d_2,m_1\\ d_2m_1\leq \frac{x}{Dq^{1+\alpha}}\\(d_2m_1,q)=1}}&\sum_{\substack{Dq^{1+\alpha}\leq d_1\leq\frac{x}{d_2m_1}\\d_1\equiv a\overline{d_2m_1}\pmod{Dq}}}\chi(d_1)\ll \frac{x}{Dq}L(1,\chi)^2\log^2 x+\frac{x^\frac 12D^\frac 12}{q^{\frac 12-\frac{\alpha}{2}-\varepsilon'}}+\frac{x}{Dq^{1+\alpha}}.
\end{align*}
The latter expression in the error term is larger than the middle term as long as $q^{1+3\alpha+\varepsilon'}D^3\leq x$, which is clearly true by our assumption on $q$ as long as $\varepsilon'$ is small (say, $\varepsilon'<\frac \alpha 2$).

To show that (\ref{bigd1}) does indeed follow from (\ref{bigd0}), note that trivially, Lemma \ref{chiId2} gives
$$\frac{1}{\phi(q)}\mathop{\sum\sum}\limits_{\substack{d_2,m_1\\ d_2m_1\leq \frac{x}{Dq^{1+\alpha}}}}\sum_{\substack{Dq^{1+\alpha}\leq d_1\leq\frac{x}{d_2m_1} \\ (d_1d_2m_1,Dq)=1}}\chi(d_1)\ll \frac{1}{\phi(q)}\mathop{\sum\sum}\limits_{\substack{d_2,m_1\\ d_2m_1\leq \frac{x}{Dq^{1+\alpha}}\\(d_2m_1,q)=1}}Dq^{\frac{\alpha}{2}}\ll \frac{x}{q^{1+\frac \alpha 2}\phi(q)}.$$
Also, we can change the congruence in (\ref{bigd0}) from $Dq$ back to $q$:
\begin{align*}\mathop{\sum\sum}\limits_{\substack{d_2,m_1\\ d_2m_1\leq \frac{x}{Dq^{1+\alpha}}}}\sum_{\substack{Dq^{1+\alpha}\leq d_1\leq\frac{x}{d_2m_1}\\d_1\equiv a\overline{d_2m_1}\pmod{q}}}\chi(d_1)=&\sum_{\substack{0<a'<D \\ \phantom{1}D\nmid a+a'q\phantom{1}}}\mathop{\sum\sum}\limits_{\substack{d_2,m_1\\ d_2m_1\leq \frac{x}{Dq^{1+\alpha}}}}\sum_{\substack{Dq^{1+\alpha}\leq d_1\leq\frac{x}{d_2m_1}\\d_1\equiv (a+a'q)\overline{d_2m_1}\pmod{Dq}}}\chi(d_1)\\
\ll &\frac{x}{q}L(1,\chi)^2\log^2 x+\frac{x}{q^{1+\alpha}}.
\end{align*}
Since all of these terms are bounded by the error term, the second half of the lemma vacuously follows.
\end{proof}

\section{Binary Sums and $\lambda(n)$}\label{binsumsec1a}

In this section, we prove some results for binary divisor sums that are twisted by a character.  These will apply to the cases where only two of the variables are large enough to affect our computations.  In other words, for some bound $\mathcal B$, we will prove lemmas that will help us with sums of the form
$$\sum_{d_1<\mathcal B}\chi(d_1)\mathop{\sum\sum}\limits_{\substack{d_2m \leq \frac{x}{d_1}\\ d_1d_2m_1\equiv a\pmod q}}1$$
and
$$\sum_{d_2<\mathcal B}\mathop{\sum\sum}\limits_{\substack{d_1m \leq \frac{x}{d_2}\\ d_1d_2m_1\equiv a\pmod q}}\chi(d_1).$$
In these cases, we can apply the congruence condition to the inside double sum and treat the outside sum later.


We will first prove our binary result dyadically or sub-dyadically, and then we prove it in generality.  The proofs here will follow very closely the standard proof of Theorem \ref{binarydivisor}, as we are simply modifying this proof to show that it still holds with the inclusion of our character mod $D$.

We note that these lemmas and corollaries are much easier if one assumes that $q$ and $D$ are relatively prime.  (Indeed, Friedlander and Iwaniec largely do this for the proofs in \cite{FI03}.)  This is because the variable $n=h+n'q$ will be equidistributed mod $D$ as $n'$ ranges over an interval $I$, which means that a sum $\sum_{n'\in I}\chi(h+n'q)$ will be bounded by $D$ regardless of the choice of $I$.  However, the proof becomes much trickier when $1<(q,D)<D$, because then $h+n'q$ does not range over all classes, and hence a sum over $n'$ may not have much cancellation.  The easiest solution, then, is to work mod $qD$, in which case $\chi(h+n'qD)=\chi(h)$ can be treated as a constant with regard to $n'$.  This is indeed what we do for much of the rest of the paper.

\begin{lemma}\label{Weildyad}
For $x$ and $q$ with $\sqrt x\leq q\leq \frac{x^{\frac 23-\alpha}}{D^\frac 32}$, let $1<\zeta<2$, $UV\leq x$, and $U,V\geq q^{\kappa}$ for some constant $\kappa$ with $\kappa>4\alpha+4\eta$.  Define $\mathcal U$ and $\mathcal V$ to be the intervals $\mathcal U=[U,\zeta U)$, $\mathcal V=[V,\zeta V)$.  Then for any $a$ with $(a,Dq)=1$,
$$\mathop{\sum\sum}\limits_{\substack{u\in \mathcal U,v\in \mathcal V\\ uv\equiv a\pmod{Dq}}}\chi(v)=\frac{1}{Dq}\mathop{\sum\sum}\limits_{\substack{u\in \mathcal U,v\in \mathcal V\\ (u,Dq)=1}}\chi(v)+\frac{\chi(a)}{Dq}\mathop{\sum\sum}\limits_{\substack{u\in \mathcal U,v\in \mathcal V\\ (u,Dq)=1}}\chi(u)+O\left(\frac{x(\zeta-1)}{q^{1+\kappa-\alpha-2\eta}}+D^\frac 32q^{\frac 12+\frac \alpha 3}\right).$$
\end{lemma}
Recall that $\eta$ was defined such that $D<x^\eta$.  

Before we prove this, it is important to note that these sums are generally small.  Indeed, we see that
\begin{align*}
\left|\frac{1}{Dq}\mathop{\sum\sum}\limits_{\substack{u\in \mathcal U,v\in \mathcal V\\ (u,Dq)=1}}\chi(v)\right|\ll &\frac{U(\zeta-1)}{q} \ll \frac{x(\zeta-1)}{q^{1+\kappa}},\end{align*}
and by Lemma \ref{chiId2},
\begin{align*}
\left|\frac{1}{Dq}\mathop{\sum\sum}\limits_{\substack{u\in \mathcal U,v\in \mathcal V\\ (u,Dq)=1}}\chi(u)\right|\ll &\frac{V(\zeta-1)}{q^{1-\alpha}} \ll \frac{x(\zeta-1)}{q^{1+\kappa-\alpha}}.  \end{align*}

So we can rewrite the result in this lemma as
\begin{gather}\label{dyadrewr}\mathop{\sum\sum}\limits_{\substack{u\in \mathcal U,v\in \mathcal V\\ uv\equiv a\pmod{Dq}}}\chi(v)=\frac{1}{\phi(Dq)}\mathop{\sum\sum}\limits_{\substack{u\in \mathcal U,v\in \mathcal V\\ (uv,q)=1}}\chi(v)+O\left(\frac{x(\zeta-1)}{q^{1+\kappa-\alpha-2\eta}}+D^\frac 32q^{\frac 12+\frac \alpha 3}\right),
\end{gather}
or simply
\begin{gather}\label{dyadrewr2}\left|\mathop{\sum\sum}\limits_{\substack{u\in \mathcal U,v\in \mathcal V\\ uv\equiv a\pmod{Dq}}}\chi(v)\right|\ll \frac{x(\zeta-1)}{q^{1+\kappa-\alpha-2\eta}}+D^\frac 32q^{\frac 12+\frac \alpha 3}.
\end{gather}

\begin{proof}
For ease of notation, define
$$N(U,V,q,a,\zeta)=\mathop{\sum\sum}\limits_{\substack{u\in \mathcal U,v\in \mathcal V\\ uv\equiv a\pmod{Dq}}}\chi(v).$$
Note that we can separate out the character via a congruence condition on $v$:
\begin{align}\label{hsplit1}
N(U,V,q,a,\zeta)=\sum_{h=1}^D\chi(h)\mathop{\sum\sum}\limits_{\substack{u\in \mathcal U,v\in \mathcal V\\ uv\equiv a\pmod{Dq}\\ v\equiv h\pmod D}}1\end{align}
For ease of notation, we will write $\mathcal U_{Dq}$ to indicate the elements of $\mathcal U$ that are relatively prime to $Dq$.  Clearly, if $uv\equiv a\pmod{Dq}$ then $u$ must be in $\mathcal U_{Dq}$.  We can then express $N$ with exponential sums:
\begin{align*}
N(U,V,q,a,\zeta)=&\frac{1}{D^2q}\sum_{h=1}^D\chi(h)\sum_{u\in \mathcal U_{Dq}}\sum_{v\in \mathcal V}\sum_{s=1}^{Dq}\sum_{r=1}^De_D\left(r v-rh\right)e_{Dq}\left(s\bar ua-sv\right)\\
=&\frac{1}{D^2q}\sum_{h=1}^D\chi(h)\sum_{s=1}^{Dq}\sum_{r=1}^{D}e_{D}\left(-rh\right)\sum_{u\in \mathcal U_{Dq}}\sum_{v\in \mathcal V}e_{Dq}\left(as\bar u\right)e_{Dq}\left((rq-s)v\right)
\end{align*}
Let us express by $N_1$ the restriction of this sum to $s=qD$.  So
\begin{align*}
N_1=&\frac{1}{D^2q}\sum_{h=1}^D\chi(h)\sum_{r=1}^D\sum_{u\in \mathcal U_{Dq}}\sum_{v\in \mathcal V}e_{D}\left(rv-rh\right)\\
&=\frac{1}{Dq}\sum_{h=1}^D\chi(h)\sum_{\stackrel{v\in \mathcal V}{v\equiv h\pmod D}}\sum_{u\in \mathcal U_{Dq}}1\\
&=\frac{1}{Dq}\sum_{v\in \mathcal V}\sum_{u\in \mathcal U_{Dq}}\chi(v).
\end{align*}
This comprises the first part of the required main term.

Next, let $N_2$ denote the restriction of $N$ to the cases where $rq-s\equiv 0\pmod{Dq}$.  Clearly, this must mean that $q|s$.  So we let $s=qs'$, which means that
$$r-s'\equiv 0\pmod{D}.$$
Since $s'\leq D$, this means that $r=s'$.  Thus
\begin{align*}
N_2=&\frac{1}{D^2q}\sum_{h=1}^D\chi(h)\sum_{r=1}^{D}e_{D}\left(-rh\right)\sum_{u\in \mathcal U_{Dq}}\sum_{v\in \mathcal V}e_{Dq}\left(arq\bar u\right)\\
=&\frac{1}{Dq}\sum_{h=1}^D\chi(h)\sum_{\substack{u\in \mathcal U_{Dq}\\ u\equiv a\bar h\pmod D}}\sum_{v\in \mathcal V}1\\
=&\frac{\chi(a)}{Dq}\sum_{u\in \mathcal U_{q}}\sum_{v\in \mathcal V}\chi(u).
\end{align*}
Finally, we note that $N_1$ and $N_2$ both count the case where $r=D$ and $s=qD$.  Letting $N_3$ denote this double-counted case, we have
$$N_3=\frac{1}{D^2q}\sum_{h=1}^D\chi(h)\sum_{u\in \mathcal U_{Dq}}\sum_{v\in \mathcal V}1=0,$$
by the orthogonality of the character.

So we then consider $N_0=N-N_1-N_2+N_3$.  We will show that $N_0$ is small, thereby proving the lemma.  Here,
\begin{align*}
N_0=&\frac{1}{D^2q}\sum_{h=1}^D\chi(h)\mathop{\sum\sum}\limits_{\substack{1\leq s\leq qD-1\\1\leq r\leq D \\ rq-s\not \equiv 0\pmod{Dq}}}e_{D}\left(-rh\right)\sum_{u\in \mathcal U_{Dq}}\sum_{v\in \mathcal V}e_{Dq}\left(as\bar u\right)e_{Dq}\left((rq-s)v\right)\\
=&\frac{1}{D^2q}\sum_{h=1}^D\chi(h)\sum_{r=1}^{D}\sum_{\substack{d|qD \\ q\nmid d}}\sum_{\substack{1\leq k\leq \frac{Dq}{d} \\ \left(k,\frac{Dq}{d}\right)=1}}e_{D}\left(-rh\right)\sum_{u\in \mathcal U_{Dq}}\sum_{v\in \mathcal V}e_{qD/d}\left(ak\bar u\right)e_{Dq}\left((rq-kd)v\right)\\
&+\frac{1}{D^2q}\sum_{h=1}^D\chi(h)\sum_{r=1}^{D}\sum_{\substack{d'|D \\ d'<D }}\sum_{\substack{1\leq k\leq \frac{D}{d'} \\ \left(k,\frac D{d'}\right)=1 \\ r\neq kd'}}e_{D}\left(-rh\right)\sum_{u\in \mathcal U_{Dq}}\sum_{v\in \mathcal V}e_{D}\left(akd'\bar u\right)e_{D}\left((r-kd')v\right),
\end{align*}
where $s=dk$ or $s=qd'k$.

Examining the latter sum, we have
\begin{align*}
\frac{1}{D^2q}&\left|\sum_{h=1}^D\chi(h)\sum_{r=1}^{D}\sum_{\substack{d'|D \\ d'<D }}\sum_{\substack{1\leq k\leq \frac{D}{d'} \\ \left(k,\frac D{d'}\right)=1 \\ r\neq kd'}}e_{D}\left(rh\right)\sum_{u\in \mathcal U_{Dq}}\sum_{v\in \mathcal V}e_{D}\left(akd'\bar u\right)e_{D}\left((r-kd')v\right)\right|\\
\ll &\frac{1}{D^2q}\sum_{h=1}^D\sum_{r=1}^{D}\sum_{\substack{d'|D \\ d'<D }}\sum_{\substack{1\leq k\leq \frac{D}{d'} \\ \left(k,\frac D{d'}\right)=1 \\ r\neq kd'}}\left|\sum_{u\in \mathcal U_{Dq}}e_{D/d'}\left(ak\bar u\right)\right|\frac{1}{||(r-kd')/D||}
\end{align*}
We can evaluate the sum over $u$ trivially.  Letting $r'\equiv r-kd'$ mod $D$, the above is then
\begin{align}\label{error1}\ll \frac{U(\zeta-1)}{D^2q}&\sum_{h=1}^D\sum_{\substack{d'|D \\ d'<D }}\sum_{\substack{1\leq k\leq \frac{D}{d'} \\ \left(k,\frac D{d'}\right)=1 }}\sum_{r'=1}^{D-1}\frac{1}{||r'/D||}\ll \frac{DU(\zeta-1)\log^2 x}{q}\ll \frac{x(\zeta-1)}{q^{1+\kappa-\alpha-2\eta}}.
\end{align}

For the former, write $d=t_1t_2$, where $t_1|q$ and $(\frac{q}{t_1},t_2)=1$.  So
\begin{align*}
\frac{1}{D^2q}&\left|\sum_{h=1}^D\chi(h)\sum_{r=1}^{D}\sum_{\substack{d|qD \\ q\nmid d}}\sum_{\substack{1\leq k\leq \frac{Dq}{d} \\ \left(k,\frac{Dq}{d}\right)=1}}e_{D}\left(rh\right)\sum_{u\in \mathcal U_{Dq}}\sum_{v\in \mathcal V}e_{qD/d}\left(ak\bar u\right)e_{Dq}\left((rq-kd)v\right)\right|\\
\ll &\frac{1}{Dq}\sum_{r=1}^{D}\sum_{\substack{t_1|q \\ t_1<q}}\sum_{\substack{t_2|D \\ (t_2,\frac{q}{t_1})=1 }}\sum_{\substack{1\leq k\leq \frac{Dq}{t_1t_2} \\ \left(k,\frac{Dq}{t_1t_2}\right)=1}}\left|\sum_{u\in \mathcal U_{Dq}}e_{qD/t_1t_2}\left(ak\bar u\right)\right|\left|\sum_{v\in \mathcal V}e_{qD/t_1}\left(\left(r\frac{q}{t_1}-kt_2\right)v\right)\right|\\
\ll &\frac{1}{Dq}\sum_{r=1}^{D}\sum_{\substack{t_1|q \\ t_1<q}}\sum_{\substack{t_2|D \\ (t_2,\frac{q}{t_1})=1 }}\sum_{\substack{1\leq k\leq \frac{Dq}{t_1t_2} \\ \left(k,\frac{Dq}{t_1t_2}\right)=1}}\left|\sum_{u\in \mathcal U_{Dq}}e_{qD/t_1t_2}\left(ak\bar u\right)\right|\frac{1}{||(r\frac{q}{t_1}-kt_2)/(qD/t_1)||}
\end{align*}
In order to apply Kloosterman results to the exponential sum modulo $qD/d$, we must loosen the restriction that $(u,Dq)=1$ to a restriction that $\left(u,\frac{Dq}d\right)=1$.  To this end, we can use the fact that
\begin{align*}\sum_{\stackrel{u\in \mathcal U}{(u,Dq)=1}}e_{Dq}(ak\bar u)=&\sum_{\stackrel{u\in \mathcal U}{(u,Dq/d)=1}}e_{Dq/d}(ak\bar u)\sum_{l|(d,u)}\mu(l)\\
=&\sum_{l|d}\mu(l)\sum_{\stackrel{lj\in \mathcal U}{(j,Dq/d)=1}}e_{Dq/d}(ak\overline{lj}).
\end{align*}
We use this to bound the quintuple sum above:
\begin{align*}
\ll &\frac{1}{Dq}\sum_{r=1}^{D}\sum_{\substack{t_1|q \\ t_1<q}}\sum_{\substack{t_2|D \\ (t_2,\frac{q}{t_1})=1 }}\sum_{\substack{1\leq k\leq \frac{Dq}{t_1t_2} \\ \left(k,\frac{Dq}{t_1t_2}\right)=1}}\sum_{l|t_1t_2}\left|\sum_{\substack{(j,qD/t_1t_2)=1 \\ lj\in \mathcal U}}e_{qD/t_1t_2}\left(ak\overline{lj}\right)\right|\frac{1}{||(r\frac{q}{t_1}-kt_2)/(qD/t_1)||}.
\end{align*}
We can simplify the remaining sum by evaluating the parts that are not the exponential sum.  Letting $k'=r\frac{q}{t_1}-kt_2$, we have
\begin{align*}
\sum_{r=1}^{D}&\sum_{\substack{t_1|q \\ t_1<q}}\sum_{\substack{t_2|D \\ (t_2,\frac{q}{t_1})=1 }}\sum_{\substack{1\leq k\leq \frac{Dq}{t_1t_2} \\ \left(k,\frac{Dq}{t_1t_2}\right)=1}}\frac{1}{||(r\frac{q}{t_1}-kt_2)/(qD/t_1)||}\\
\ll &\sum_{r=1}^{D}\sum_{\substack{t_1|q \\ t_1<q}}\sum_{\substack{t_2|D \\ (t_2,\frac{q}{t_1})=1 }}\sum_{1\leq k'<\frac{Dq}{t_1} }\frac{1}{||k'/(qD/t_1)||}\\
\ll &D^2q\log(Dq)\sum_{\substack{t_1|q \\ t_1<q}}\sum_{\substack{t_2|D \\ (t_2,\frac{q}{t_1})=1 }}\frac{1}{t_1}
\end{align*}
So we bound the sextuple sum:
\begin{align*}
\ll &D\log(Dq)\sum_{\substack{t_1|q \\ t_1<q}}\sum_{\substack{t_2|D \\ (t_2,\frac{q}{t_1})=1 }}\sum_{l|t_1t_2}\frac{1}{t_1}\max_{(c,qD/t_1t_2)=1}\left|\sum_{\substack{(j,qD/t_1t_2)=1 \\ lj\in \mathcal U}}e_{qD/t_1t_2}\left(c\overline{j}\right)\right|.
\end{align*}
Recall that the Weil bound for Kloosterman sums states that for any $c$ with $(c,q/d)=1$,
$$\left|\sum_{\stackrel{lj\in \mathcal U}{(j,q/d)=1}}e_{Dq/d}\left(c\bar j\right)\right|\ll \frac{U(\zeta-1) d}{Dql}+\left(\frac{D q}{d}\right)^{\frac 12+\varepsilon'}$$
for any $\varepsilon'>0$.  Applying this to bound our remaining quadruple sum:
\begin{align*}
\ll &D\log(Dq)\sum_{\substack{t_1|q \\ t_1<q}}\sum_{\substack{t_2|D \\ (t_2,\frac{q}{t_1})=1 }}\sum_{l|t_1t_2}\left(\frac{U(\zeta-1) t_2}{Dql}+\frac{1}{t_1}\left(\frac{D q}{t_1t_2}\right)^{\frac 12+\varepsilon'}\right)\\
\ll &Dq^{2\varepsilon'}\left(\frac{U(\zeta-1)}{q}+\left(D q\right)^{\frac 12}\right)\\
\ll &\frac{x(\zeta-1)}{q^{1+\kappa-2\varepsilon'-2\eta}}+D^\frac 32 q^{\frac 12+2\varepsilon'}.
\end{align*}
Taking $\varepsilon'=\frac \alpha 6$ and combining this with (\ref{error1}) then gives the theorem.

\end{proof}

\section{Binary Sums, Part 2}\label{Binsumsec2}

We can now apply these lemmas and corollaries to the ternary sum in $T_1$.  Recall that we have already handled the case where one of the variables is $\geq Dq^{1+\alpha}$.

\begin{lemma}\label{binarylemma} Let $\mathcal B=\frac{x}{D^\frac 52q^{\frac 32+\alpha}}$.  Assume that $\sqrt x\leq q\leq \frac{x^{\frac 23-2\alpha}}{D}$. Let $\rho$ be such that $0<\rho\leq \frac \alpha 8$.  Then
$$\sum_{d_1<\mathcal B}\chi(d_1)\mathop{\sum\sum}\limits_{\substack{d_2m_1\leq \frac{x}{d_1}\\d_2,m_1<q^{1+\alpha}D\\ d_1d_2m_1\equiv a\pmod{Dq}}}1=\frac{1}{\phi(Dq)}\sum_{d_1<\mathcal B}\chi(d_1)\mathop{\sum\sum}\limits_{\substack{d_2m_1\leq \frac{x}{d_1}\\d_2,m_1<q^{1+\alpha}D\\ (d_1d_2m_1,D q)=1}}1+O\left(\frac{x^{1-\frac \rho 2}}{Dq}\right),$$

$$\sum_{d_2<\mathcal B}\mathop{\sum\sum}\limits_{\substack{d_1m_1\leq \frac{x}{d_2}\\ d_1,m_1<q^{1+\alpha}D\\  d_1\geq\mathcal B \\ d_1d_2m_1\equiv a\pmod{Dq}}}\chi(d_1)=\frac{1}{\phi(Dq)}\sum_{d_2<\mathcal B}\mathop{\sum\sum}\limits_{\substack{d_1m_1\leq \frac{x}{d_2}\\ d_1,m_1<q^{1+\alpha}D\\  d_1\geq\mathcal B \\ (d_1d_2m_1,D q)=1}}\chi(d_1)+O\left(\frac{x^{1-\frac \rho 2}}{Dq}\right),$$
and
$$\sum_{m_1<\mathcal B}\mathop{\sum\sum}\limits_{\substack{d_1d_2\leq \frac{x}{m_1}\\ \mathcal B\leq d_1,d_2<q^{1+\alpha}D\\ d_1d_2m_1\equiv a\pmod{Dq}}}\chi(d_1)=\frac{1}{\phi(Dq)}\sum_{m_1<\mathcal B}\mathop{\sum\sum}\limits_{\substack{d_1d_2\leq \frac{x}{m_1}\\ \mathcal B\leq d_1,d_2<q^{1+\alpha}D\\  (d_1d_2m_1,Dq)=1}}\chi(d_1)+O\left(\frac{x^{1-\frac \rho 2}}{Dq}\right).$$

\end{lemma}

\begin{proof} For the first expression, we can apply Theorem \ref{binarydivisor}.  (We have stated it as the sum over all $n$, but it applies to subintervals such as the one here as well.)  So
\begin{align*}
\sum_{d_1<\mathcal B}\chi(d_1)\mathop{\sum\sum}\limits_{\substack{d_2m_1\leq \frac{x}{m_1}\\d_1d_2m_1\equiv a\pmod{Dq}}}1=&\frac{1}{\phi(Dq)}\sum_{\substack{d_1<\mathcal B }}\chi(d_1)\left(\mathop{\sum\sum}\limits_{\substack{d_2m_1\leq \frac{x}{d_1}\\(d_1d_2m_1,Dq)=1}}1+O\left(\frac{x}{d_1(Dq)^{1+\rho}}\right)\right)\\
=&\frac{1}{\phi(Dq)}\sum_{\substack{d_1<\mathcal B }}\chi(d_1)\mathop{\sum\sum}\limits_{\substack{d_2m_1\leq \frac{x}{d_1}\\(d_1d_2m_1,Dq)=1}}1+O\left(\frac{x^{1-\frac{\rho}{2}}}{Dq}\right).
\end{align*}

We next turn to the second equality in the lemma.  We split up the ranges $d_1\in [\mathcal B,Dq^{1+\alpha})$ and $m_1\in [\mathcal B,Dq^{1+\alpha})$ into dyadic intervals $[U,\zeta' U)$ and $[V,\zeta' Z)$, for some $1<\zeta'\leq 2$, and then we split these into subintervals $\mathcal U=[U_1,U_1+x^{-\rho}U)$ and $\mathcal V=[V_1,V_1+x^{-\rho}V)$.  Let $\overline{\mathcal U}$ and $\overline{\mathcal V}$ denote the set of these $U_1$ and $V_1$ that are the starting points of these smaller intervals.

Note that the only way that either $d_1$ or $m_1$ can be outside of these ranges is if $m_1<\mathcal B$, since neither one can be $\geq Dq^{1+\alpha}$, and $d_1\geq \mathcal B$.  In the case where $m_1<\mathcal B$, the sum will be negligible.  This is due to the fact that
$$\mathcal B^2Dq^{1+\alpha}=\frac{x^2q^{1+\alpha}}{D^5q^{3+2\alpha}}\leq \frac{x^{1-\frac \alpha 2}}{D^5},$$
since $q\geq \sqrt x$.  Hence by Shiu's theorem,
\begin{gather}\label{BBDq}\sum_{\substack{m_1<\mathcal B }}\sum_{\substack{d_2<\mathcal B }}\mathop{\sum}\limits_{\substack{d_1\leq Dq^{1+\alpha}\\d_1d_2m_1\equiv a \pmod{Dq}}}1\ll \sum_{\substack{n\leq \frac{x^{1-\frac \alpha 2}}{D^5} \\ n\equiv a\pmod{Dq}}}\tau_3(n)\ll \frac{x^{1-\frac \alpha 2}\log^2 x}{D^5\phi(Dq)}\ll \frac{x^{1-4\rho}}{Dq},\end{gather}
which is an acceptable error.  So we only need to consider these ranges  $d_1\in [\mathcal B,Dq^{1+\alpha})$ and $m_1\in [\mathcal B,Dq^{1+\alpha})$

Fix $d_2$, and let $y=\frac{x}{d_2}$.  Since $$(U_1+x^{-\rho}U)(V_1+x^{-\rho}V)\leq U_1V_1(1+2x^{-\rho}+x^{-2\rho}),$$we have

\begin{gather}\label{U1V1}\mathop{\sum\sum}\limits_{\substack{d_1m_1\leq y\\ d_1,m_1\in [\mathcal B,Dq^{1+\alpha})\\ d_1d_2m_1\equiv a\pmod{Dq}}}\chi(d_1)=\mathop{\sum\sum}_{\substack{U_1\in \overline{\mathcal U},V_1\in \overline{\mathcal V}\\U_1V_1\leq y}}\mathop{\sum\sum}\limits_{\substack{d_1\in \mathcal U,m_1\in \mathcal V \\ d_1d_2m_1\equiv a\pmod{Dq}}}\chi(d_1)+O\left(\sum_{\substack{y<n\leq y+3yx^{-\rho}\\ n\equiv a\overline{d_2}\pmod{Dq}}}\tau(n)\right).
\end{gather}
By Shiu's theorem, the big-O term is $$\ll \frac{yx^{-\rho}\log x}{Dq}.$$Moreover, by Lemma \ref{Weildyad} (or more specifically (\ref{dyadrewr})), if $(d_2,Dq)=1$ then
\begin{align*}
\mathop{\sum\sum}_{\substack{U_1\in \overline{\mathcal U},V_1\in \overline{\mathcal V}\\U_1V_1\leq y}}\mathop{\sum\sum}\limits_{\substack{d_1\in \mathcal U,m_1\in \mathcal V \\ d_1d_2m_1\equiv a\pmod{Dq}}}\chi(d_1)=&\frac{1}{\phi(Dq)}\mathop{\sum\sum}_{\substack{U_1\in \overline{\mathcal U},V_1\in \overline{\mathcal V}\\U_1V_1\leq y}}\left(\mathop{\sum\sum}\limits_{\substack{d_1\in \mathcal U,m_1\in \mathcal V \\ (d_1m_1,Dq)=1}}\chi(d_1)+O\left(\frac{yx^{-\rho}}{q^{1+\kappa-\alpha-2\eta}}+D^\frac 32q^{\frac 12+\frac \alpha 3}\right)\right)\\
=&\frac{1}{\phi(Dq)}\mathop{\sum\sum}_{\substack{U_1\in \overline{\mathcal U},V_1\in \overline{\mathcal V}\\U_1V_1\leq y}}\mathop{\sum\sum}\limits_{\substack{d_1\in \mathcal U,m_1\in \mathcal V \\ (d_1m_1,Dq)=1}}\chi(d_1)+O\left(\frac{y}{q^{1+\kappa-\frac{3\alpha}{2}-2\eta}}+D^\frac 32q^{\frac 12+\frac \alpha 2}\right)
\end{align*}
since $|\overline{\mathcal U}||\overline{\mathcal V}|\ll x^{2\rho}\log^2 x$.  Using the logic of (\ref{U1V1}) again in reverse, along with (\ref{BBDq}), we then have
\begin{gather*}\mathop{\sum\sum}\limits_{\substack{d_1m_1\leq y\\ d_1,m_1<q^{1+\alpha}D\\  d_1\geq\mathcal B \\ d_1d_2m_1\equiv a\pmod{Dq}}}\chi(d_1)=\frac{1}{\phi(Dq)}\mathop{\sum\sum}\limits_{\substack{d_1m_1\leq y\\ d_1,m_1<q^{1+\alpha}D\\  d_1\geq\mathcal B \\ (d_1d_2m_1,Dq)=1}}\chi(d_1)+O\left(\frac{yx^{-\rho}\log x}{\phi(Dq)}+\frac{y}{q^{1+\kappa-\frac{3\alpha}{2}-2\eta}}+D^\frac 32q^{\frac 12+\frac \alpha 2}\right).
\end{gather*}
So
\begin{align*}\sum_{d_2<\mathcal B}&\mathop{\sum\sum}\limits_{\substack{d_1m_1\leq \frac{x}{d_2}\\ d_1,m_1<q^{1+\alpha}D\\  d_1\geq\mathcal B \\ d_1d_2m_1\equiv a\pmod{Dq}}}\chi(d_1)\\
=&\sum_{d_2<\mathcal B}\left(\frac{1}{\phi(Dq)}\mathop{\sum\sum}\limits_{\substack{d_1m_1\leq \frac{x}{d_2}\\ d_1,m_1<q^{1+\alpha}D\\  d_1\geq\mathcal B \\ (d_1d_2m_1,Dq)=1}}\chi(d_1)+O\left(\frac{x^{1-\rho}\log x}{d_2\phi(Dq)}+\frac{x}{d_2q^{1+\kappa-\frac{3\alpha}{2}-2\eta}}+D^\frac 32q^{\frac 12+\frac \alpha 2}\right)\right)\\
=&\frac{1}{\phi(Dq)}\sum_{d_2<\mathcal B}\mathop{\sum\sum}\limits_{\substack{d_1m_1\leq \frac{x}{d_2}\\ d_1,m_1<q^{1+\alpha}D\\  d_1\geq\mathcal B \\ (d_1d_2m_1,Dq)=1}}\chi(d_1)+O\left(\frac{x^{1-\frac \rho 2}}{Dq}+\frac{x}{q^{1+\kappa-2\alpha-2\eta}}+\frac{x}{Dq^{1+\frac \alpha 2}}\right).
\end{align*}
Note that
$$\frac{x}{Dq^{1+\frac \alpha 2}}\leq \frac{x^{1-\frac \alpha 4}}{Dq}\leq \frac{x^{1-2\rho}}{Dq}.$$
since $q\geq \sqrt x$.  Likewise, our assumption that $\kappa>4\alpha+4\eta$ gives
$$\frac{x}{q^{1+\kappa-2\alpha-2\eta}}\leq  \frac{x}{Dq^{1+\kappa-2\alpha-4\eta}}\leq \frac{x}{Dq^{1+2\alpha}}\leq \frac{x^{1-8\rho}}{Dq}.$$
So we can consolidate the expression above:
\begin{align*}\sum_{d_2<\mathcal B}&\mathop{\sum\sum}\limits_{\substack{d_1m_1\leq \frac{x}{d_2}\\ d_1,m_1<q^{1+\alpha}D\\  d_1\geq\mathcal B \\ d_1d_2m_1\equiv a\pmod{Dq}}}\chi(d_1)=\frac{1}{\phi(Dq)}\sum_{d_2<\mathcal B}\mathop{\sum\sum}\limits_{\substack{d_1m_1\leq \frac{x}{d_2}\\ d_1,m_1<q^{1+\alpha}D\\  d_1\geq\mathcal B \\ (d_1d_2m_1,Dq)=1}}\chi(d_1)+O\left(\frac{x^{1-\frac \rho 2}}{Dq}\right).\end{align*}
This completes the proof for the second expression in the lemma.  The proof for the third expression is the same as the second.
\end{proof}

\section{A Binary Sum Corollary}

An interesting corollary to the above is that we can find a sharper version of the Brun-Titchmarsh theorem for $q\leq x^{\frac 23-\varepsilon}$ for some $\varepsilon>0$ if we restrict our primes to those $p$ for which $\chi(p)=1$.  We prove this in two steps.  First, we find a bound for a sum of $\lambda(n)$ in an arithmetic progression.
\begin{corollary}\label{dyadcor2}
Let $q$ and $x$ be such that $q\leq \frac{x^{\frac 23-\alpha}}{D^\frac 32}$, and let $(a,q)=1$.  Then
$$\sum_{\substack{n\leq x\\n\equiv a\pmod{q}}}\lambda(n)\ll \frac{x}{q}L(1,\chi)\log x.$$

\end{corollary}
\begin{proof}
We will assume that $\sqrt x<q\leq \frac{x^{\frac 23-\alpha}}{D^\frac 32}$; if it is not, one can multiply $q$ by a prime $s$ with $(a,s)=1$ so that $qs$ is in this range.  We begin by working with the sum mod $Dq$, which is to say the sum
$$\sum_{\substack{n\leq x\\n\equiv a\pmod{Dq}}}\lambda(n).$$
We will assume that $(a,D)=1$ as well.  If it were not, there would be another $b$ such that $b\equiv a\pmod q$ and $(b,D)=1$.  Hence we can assume without loss of generality that this $b$ is actually the $a$ we have chosen.

Obviously, this is the same as working with the sum
$$\mathop{\sum\sum}_{\substack{jl\leq x\\ jl\equiv a\pmod{Dq}}}\chi(j).$$
Let $\kappa>0$ be as in the previous two lemmas.  For the restriction
$$\mathop{\sum\sum}_{\substack{jl\leq x \\q^\kappa\leq j,l\leq xq^{-\kappa} \\ jl\equiv a\pmod{Dq}}}\chi(j),$$
we can use exactly the same process that we used in (\ref{dyadrewr2}), finding
$$\mathop{\sum\sum}_{\substack{jl\leq x \\q^\kappa\leq j,l\leq xq^{-\kappa} \\ jl\equiv a\pmod{Dq}}}\chi(j)\ll \frac{x}{q^{1+\kappa-2\alpha-\eta}}+D^\frac 32q^{\frac 12+\frac \alpha 2}\ll \frac{x^{1-\frac \rho 2}}{Dq}+D^\frac 32q^{\frac 12+\frac \alpha 2}.$$
Note that the right-hand side is smaller than the bound in the corollary here.  Thus, we will be able to ignore it.

So it remains to evaluate only the edges of the interval.  In the case where $j$ is small, we can apply Lemma \ref{chiId2}:
$$\sum_{j<q^\kappa}\chi(j)\sum_{\substack{l\leq \frac xj\\ l\equiv a\bar j\pmod{Dq}}}1=\frac x{Dq}\sum_{\substack{j<q^\kappa \\ (j,q)=1}}\frac{\chi(j)}{j}+O\left(q^\kappa\right)\ll \frac{x}{Dq}L(1,\chi)\log x .$$
In the case where $j$ is large and $l$ is small, we note that since $j\equiv a\bar l\pmod D$, we have $\chi(j)=\chi(a)\chi(l)$.  So
$$\sum_{l<q^\kappa}\sum_{\substack{j\leq \frac xl\\ j\equiv a\bar l\pmod{Dq}}}\chi(j)=\chi(a)\sum_{l<q^\kappa}\chi(l)\sum_{\substack{j\leq \frac xl\\ j\equiv a\bar l\pmod{Dq}}}1.$$
From here, the proof is the same as in the case of $j$ small.

To change the sum mod $Dq$ into a sum mod $q$, we sum the congruence classes mod $Dq$ that will give us $a$ mod $q$:
$$\mathop{\sum}\limits_{\substack{n\leq x \\ n\equiv a\pmod{q}}}\lambda(n)=\sum_{\substack{0\leq a'<D \\ (a+a'q,D)=1}}\mathop{\sum}\limits_{\substack{n\leq x \\ \phantom{1}n\equiv a+a'q\pmod{Dq}\phantom{1}}}\lambda(n)\ll\frac{x}{q}L(1,\chi)\log x.$$

\end{proof}

From here, we can give a version of the Brun-Titchmarsh inequality among the exceptional primes in the presence of Siegel zeroes.

\begin{corollary}\label{BTforprimes}
If $q\leq \frac{x^{\frac 23-\alpha}}{D^\frac 32}$ then for any $a$ with $(a,q)=1$,
$$\sum_{\substack{ p\leq x \\  p\equiv a \pmod{q}\\\chi(p)=1 }}1\ll \frac{x}{q}L(1,\chi)\log x.$$
\end{corollary}
\begin{proof}
Since $\lambda(p)=2$ for these primes, we see that
$$\sum_{\substack{ p\leq x \\  p\equiv a \pmod{q} \\ \chi(p)=1 }}1\leq \frac 12 \sum_{\substack{ n\leq x \\  n\equiv a \pmod{q} }}\lambda(n).$$
The corollary then follows from the previous corollary.
\end{proof}

\section{Ternary Sums}\label{tersumsec1}

For the remaining intervals, we will need to deal with the full twisted ternary sum.  To do this, we will apply the theorems about Kloosterman sums from Friedlander, Iwaniec, and Shparlinski.

Recall that we are examining the sum
\begin{gather}\label{tersum}\mathop{\sum\sum\sum}_{\substack{d_1d_2m_1\leq x \\ \mathcal B\leq d_1,d_2,m_1<Dq^{1+\alpha} \\ d_1d_2m_1\equiv a\pmod{Dq}}}\chi(d_1).\end{gather}
As before, we split the region $[\mathcal B,Dq^{1+\alpha})$ into dyadic intervals $[U,\zeta' U)$, and then those are split into subregions $[M,M+\Delta U)$ for some $\Delta=o(1)$.  In particular, we will take $\Delta=x^{-\nu}$ where $\nu<1/10000$ and $D\leq x^{\eta}\leq x^{\nu/5}$.  Let $$\mathcal M=\{[M_1,M_1+\Delta U_1)\times [M_2,M_2+\Delta U_2)\times [M_3,M_3+\Delta U_3))\}$$ denote a set of ordered triples where the $U_i$ are starting points of the dyadic regions and the $M_i$ are starting points for the subregions, and let $\overline{\mathcal M}$ be the set of all such $\mathcal M$ in (\ref{tersum}).

We consider sums of the form
$$\mathcal T(\mathcal M)=\mathop{\sum\sum\sum}_{\substack{(t_1,t_2,t_3)\in \mathcal M\\ t_1t_2t_3\equiv a\pmod{Dq}}}f_1(t_1)f_2(t_2)f_3(t_3)$$
and
$$\mathcal T^*(\mathcal M)=\mathop{\sum\sum\sum}_{\substack{(t_1,t_2,t_3)\in \mathcal M \\ (t_1t_2t_3,Dq)=1}}f_1(t_1)f_2(t_2)f_3(t_3),$$
where one of the $f_i$ is $\chi$ and the other two are 1.  Hereafter, we will assume without loss of generality that $M_1\geq M_2\geq M_3$.


We can then write
$$\mathop{\sum\sum\sum}_{\substack{d_1d_2m_1\leq x \\ \mathcal B\leq d_1,d_2,m_1<Dq^{1+\alpha} \\ d_1d_2m_1\equiv a\pmod{Dq}}}\chi(d_1)=\sum_{\substack{\mathcal M\in  \overline{\mathcal M}\\ \frac{L(1,\chi)x}{\log^{10}x}\leq M_1M_2M_3\leq x}}\mathcal T(\mathcal M)+O\left(\sum_{\substack{x<n\leq (1+\Delta)^3x\\ n\equiv a\pmod{Dq}}}\tau_3(n)+\sum_{\substack{n\leq \frac{8xL(1,\chi)}{\log^{10} x}\\ n\equiv a\pmod{Dq}}}\tau_3(n)\right).$$
By Shiu's theorem, the first big-O term can be bounded with
$$\sum_{\substack{x<n\leq (1+\Delta)^3x\\ n\equiv a\pmod{Dq}}}\tau_3(n)\ll \frac{x\Delta\log^2 x}{Dq}\ll \frac{x^{1-\frac{\nu}{2}}}{Dq},$$
while the second can be bounded with
$$\sum_{\substack{n\leq \frac{8L(1,\chi)x}{\log^{10} x}\\ n\equiv a\pmod{Dq}}}\tau_3(n)\ll \frac{L(1,\chi)x}{Dq\log^8 x}.$$
The latter of the two estimates is larger if $x$ is sufficiently large.  So we have
\begin{gather}\label{log10bd}
\mathop{\sum\sum\sum}_{\substack{d_1d_2m_1\leq x \\ \mathcal B\leq d_1,d_2,m_1<Dq^{1+\alpha} \\ d_1d_2m_1\equiv a\pmod{Dq}}}\chi(d_1)=\sum_{\substack{\mathcal M\in \overline{\mathcal M} \\ \frac{L(1,\chi)x}{\log^{10}x}\leq M_1M_2M_3\leq x}}\mathcal T(\mathcal M)+O\left(\frac{L(1,\chi)x}{Dq\log^8 x}\right).
\end{gather}
Similarly,
\begin{gather}\label{log10bd2}
\mathop{\sum\sum\sum}_{\substack{d_1d_2m_1\leq x \\ \mathcal B\leq d_1,d_2,m_1<Dq^{1+\alpha} \\ (d_1d_2m_1,Dq)=1}}\chi(d_1)=\frac{1}{\phi(Dq)}\sum_{\substack{\mathcal M\in \overline{\mathcal M} \\ \frac{L(1,\chi)x}{\log^{10}x}\leq M_1M_2M_3\leq x}}\mathcal T^*(\mathcal M)+O\left(\frac{L(1,\chi)x}{Dq\log^8 x}\right).
\end{gather}

Our goal will then be to show that
$$\sum_{\substack{\mathcal M\in \overline{\mathcal M} \\ \frac{L(1,\chi)x}{\log^{10}x}\leq M_1M_2M_3\leq x}}\mathcal T(\mathcal M)=\frac{1}{\phi(Dq)}\sum_{\substack{\mathcal M\in \overline{\mathcal M} \\ \frac{L(1,\chi)x}{\log^{10}x}\leq M_1M_2M_3\leq x}}\mathcal T^*(\mathcal M)+O\left(\frac{x^{1-\varepsilon}}{Dq}\right)$$
for some $\varepsilon>0$.  Since $\left|\overline{\mathcal M}\right|\ll x^{3\nu}\log^3 x$, it will suffice to show that
$$\left|\mathcal T(\mathcal M)-\frac{1}{\phi(Dq)}\mathcal T^*(\mathcal M)\right|\ll \frac{x^{1-4\nu}}{Dq},$$
which will then give us
\begin{gather}\label{Tbound1}
\sum_{\mathcal M\in \overline{\mathcal M}}\left|\mathcal T(\mathcal M)-\frac{1}{\phi(Dq)}\mathcal T^*(\mathcal M)\right|\ll \frac{x^{1-\nu}\log^3 x}{Dq}.
\end{gather}

We note, however, that if we are to evaluate $\mathcal T^*(\mathcal M)$, we find significant cancellation for whichever $i$ has $f_i=\chi$.  Indeed, the sum over this $t_i$ will be bounded by $Dq^{\varepsilon}$ for any $\varepsilon>0$ by Lemma \ref{chiId2}.  So along the remaining interval with $M_3\geq \frac{x}{D^\frac 52q^{\frac 32+\alpha}}$, we have
$$\left|\frac{1}{\phi(Dq)}\mathcal T^*(\mathcal M)\right|\ll \frac{Dq^\varepsilon}{\phi(Dq)}\Delta^2 U_1U_2\ll \frac{Dq^\varepsilon \Delta^2 x}{\phi(Dq)M_3}\ll \frac{D^\frac 72q^{\frac 32+2\alpha}\Delta^2}{\phi(Dq)}.$$
Since Theorem \ref{T1final} will take $q<x^{\frac{16}{31}}$, the above bound is clearly much smaller than $\frac{x^{1-4\nu}}{Dq}$.  So it will suffice to simply show that $\left|\mathcal T(\mathcal M)\right|\ll \frac{x^{1-4\nu}}{Dq}$.

Thus, our goal here will be to prove the following.

\begin{theorem}\label{mainishgoal}
For $\mathcal M\in \overline{\mathcal M}$ as defined above and $q<x^{\frac{30}{59}-12\nu}$,
\begin{align*} 
\left|\mathcal T(\mathcal M)\right|\ll \frac{x^{1-4\nu}}{Dq}.
\end{align*}
Consequently,
$$\mathop{\sum\sum\sum}_{\substack{d_1d_2m_1\leq x \\ \mathcal B\leq d_1,d_2,m_1<Dq^{1+\alpha} \\ d_1d_2m_1\equiv a\pmod{Dq}}}\chi(d_1)=\frac{1}{\phi(Dq)}\mathop{\sum\sum\sum}_{\substack{d_1d_2m_1\leq x \\ \mathcal B\leq d_1,d_2,m_1<Dq^{1+\alpha} \\ (d_1d_2m_1,Dq)=1}}\chi(d_1)+O\left(\frac{L(1,\chi)x}{Dq\log^8 x}\right).$$
\end{theorem}
The latter part follows from the fact that the bound in (\ref{log10bd}) is larger than the bound in (\ref{Tbound1}).

We will also prove that if we are willing to sum over a range of $q\sim Q$, we can increase the allowable range for $Q$.  This will be the second of our goals for next few sections.  We will use the notation $\mathcal T_{Dq}(\mathcal M)$ and $\mathcal T_{Dq}^*(\mathcal M)$ to indicate the fact that the sums $T$ and $T^*$ depend on a congruence modulo $Dq$ or a relative primality with $Dq$.

Here, however, we note that we must depart slightly from our previous work and take $\mathcal B=\frac{x}{D^\frac 52 Q^{\frac 32+\alpha}}$ (rather than $\frac{x}{D^\frac 52 q^{\frac 32+\alpha}}$) and partition the interval $[\mathcal B,DQ^{1+\alpha})$ for our subintervals $M_i$ to ensure that all $T_{Dq}$ and $T_{Dq}^*$ have the same intervals $M_i$.  This change from $q$ to $Q$ does not cause any significant change in the bounds in Lemmas \ref{easy1}, \ref{easy2}, or \ref{binarylemma}, since $q$ and $Q$ differ by at most a factor of 2.

We will prove the following.
\begin{theorem}\label{mainishgoal2}
Let $Q<x^{\frac{16}{31}-12\nu}$.   Then
\begin{align*}
\sum_{q\sim Q}\left|\mathcal T_{Dq}(\mathcal M)\right|\ll \frac{x^{1-4\nu}}{D}.
\end{align*}
Consequently,
$$\sum_{q\sim Q}\left|\mathop{\sum\sum\sum}_{\substack{d_1d_2m_1\leq x \\ \mathcal B\leq d_1,d_2,m_1<DQ^{1+\alpha} \\ d_1d_2m_1\equiv a\pmod{Dq}}}\chi(d_1)-\frac{1}{\phi(Dq)}\mathop{\sum\sum\sum}_{\substack{d_1d_2m_1\leq x \\ \mathcal B\leq d_1,d_2,m_1<DQ^{1+\alpha} \\ (d_1d_2m_1,Dq)=1}}\chi(d_1)\right|\ll \frac{L(1,\chi)x}{D\log^8 x}.$$
\end{theorem}

\section{Ideas of Friedlander and Iwaniec: The Error Term}

For the remaining regions, we use the framework of \cite{FI85}.  First, define
$$c(h)=\frac 1{Dq}\int_{M_1}^{M_1+\Delta U_1}e\left(\frac{hz}{Dq}\right)dz.$$
By (3.2) of \cite{FI85}, for any $H$ with $0<H<Dq$,
\begin{gather}\label{congclass}\sum_{\substack{A\leq t_1\leq B \\ t_1\equiv a\bar t_2\bar t_3\pmod{Dq}}}1=\frac{B-A}{Dq}+\sum_{0<|h|\leq H}c(h)e\left(-\frac{a\bar t_2\bar t_3 h}{Dq}\right)+O\left(\varrho\left(\frac{B-a\bar t_2\bar t_3}{Dq}\right)-\varrho\left(\frac{A-a\bar t_2\bar t_3}{Dq}\right)\right),\end{gather}
where for any real number $z$,
$$\varrho(z)=\min(1,(H||z||)^{-1})$$
and $||\cdot ||$ again indicates distance to the nearest integer.  Lemma 3.1 of \cite{FI85} states that for any $V\geq 1$ and any $\varepsilon>0$,
$$\sum_{\substack{K'\leq k\leq K'+K \\ (k,q)=1}}\varrho\left(\frac{V-a\bar k}{Dq}\right)\ll \left(\frac{K'}{H}+(Dq)^\frac 12+\frac K{Dq}\right)x^{\varepsilon}$$
From this, it follows that,
\begin{gather}\label{rhoerror}\sum_{\substack{M_2\leq t_2\leq (1+\Delta)M_2 \\ M_3\leq t_3\leq (1+\Delta)M_3 \\ (t_2t_3,q)=1}}\varrho\left(\frac{V-a\bar t_2\bar t_3}{Dq}\right)\leq \sum_{\substack{M_2M_3\leq k\leq (1+\Delta)^2M_2M_3 \\ (k,q)=1}}\varrho\left(\frac{V-a\bar k}{Dq}\right)\tau(k)\ll \left(\frac{M_2M_3}{H}+(Dq)^\frac 12+\frac{M_2M_3}{Dq}\right)x^{\nu}.\end{gather}
Note that the first term on the right-hand side is larger than the third, since $H<qD$. Here, we will take
$$H=\frac{D^2qM_2M_3}{x^{1-5\nu}}\leq \frac{D^2qx^{5\nu}}{M_1}$$
for any choice of $\mathcal M$.  We can thus rewrite (\ref{rhoerror}) as
$$\sum_{\substack{M_2\leq t_2\leq (1+\Delta)M_2 \\ M_3\leq t_3\leq (1+\Delta)M_3 \\ (t_2t_3,q)=1}}\varrho\left(\frac{V-a\bar t_2\bar t_3}{Dq}\right)\ll \frac{x^{1-4\nu}}{D^2q}+(Dq)^\frac 12x^\nu.$$


\section{Bounding $G$}\label{tersumsec3}

We bound separately the cases where $f_1=\chi$ and $f_1=1$. We begin with the former.
\begin{lemma}\label{Gchi}
Let $f_1=\chi$, and define
$$\mathcal G(H,\mathcal M,\chi,y,j)=\max_{(a,Dq)=1}\left|\sum_{1\leq |h|\leq y}\sum_{\substack{M_2<t_2\leq M_2+\Delta U_2\\ M_3<t_3\leq M_3+\Delta U_3 \\ (t_2t_3,Dq)=1}}e\left(-\frac{a(h+jq)\bar t_2\bar t_3}{Dq}\right)\right|,$$
and 
$$\mathcal G(H,\mathcal M,\chi)=\max_{1\leq y\leq H}\max_{0\leq j\leq D-1}\mathcal G(H,\mathcal M,\chi,y,j).$$
Then
\begin{align*}
\left|\mathcal T(\mathcal M)\right|\ll \frac{M_1x^{4\nu}}{D^4q}\mathcal G(H,\mathcal M,\chi)+\frac{x^{1-4\nu}}{Dq}+D^\frac 32q^\frac 12 x^\nu.
\end{align*}
\end{lemma}
\begin{proof}Let $q'$ denote the largest divisor of $q$ that is relatively prime to $D$.  Then
\begin{align*} 
\mathcal T(&\mathcal M)=\mathop{\sum\sum\sum}_{\substack{(t_1,t_2,t_3)\in \mathcal M\\ t_1t_2t_3\equiv a\pmod{Dq}}}\chi(t_1)\\
=&\sum_{b=1}^D\chi(b)\mathop{\sum\sum}_{\substack{t_2,t_3\\ M_i\leq t_i<M_i+\Delta U_i\\ t_2t_3\equiv a\bar b\pmod{D}\\(t_2t_3,q')=1}}\sum_{\substack{M_1\leq t_1<M_1+\Delta U_1 \\ t_1\equiv a\overline{t_2t_3}\pmod{Dq}}}1\\
=&\sum_{b=1}^D\chi(b)\left(\mathop{\sum\sum}_{\substack{t_2,t_3\\ M_i\leq t_i<M_i+\Delta U_i\\ t_2t_3\equiv a\bar b\pmod{D}\\(t_2t_3,q')=1}}\left(\frac{\Delta U_1}{Dq}+\sum_{0<|h|\leq H}c(h)e\left(-\frac{a\bar t_2\bar t_3 h}{Dq}\right)\right)+O\left(\frac{x^{1-4\nu}}{D^2q}+(Dq)^\frac 12x^\nu\right)\right)\\
\end{align*}
Note that $\chi(b)=\chi(t_2)\chi(t_3)\chi(a)$ by the congruence mod $D$, and that $U_1\leq Dq^{1+\alpha}$.  So we apply Lemma \ref{chiId2}:
$$\left|\sum_{b=1}^D\chi(b)\mathop{\sum\sum}_{\substack{t_2,t_3\\ M_i\leq t_i<M_i+\Delta U_i\\ t_2t_3\equiv a\bar b\pmod{D}\\(t_2t_3,q')=1}}\frac{\Delta U_1}{Dq}\right|=\left|\chi(a)\mathop{\sum\sum}_{\substack{t_2,t_3\\ M_i\leq t_i<M_i+\Delta U_i\\(t_2t_3,q')=1}}\chi(t_2)\chi(t_3)\sum_{\substack{1\leq b\leq D-1\\b\equiv a\overline{t_2t_3}\pmod D}}\frac{\Delta U_1}{Dq}\right|\ll \frac{D\Delta U_1}{q^{1-\alpha}}\leq D^2q^{2\alpha}.$$
Hence,
\begin{align*}
\left|\mathcal T(\mathcal M)\right|\ll &\left|\frac 1D\sum_{b=1}^D\chi(b)\sum_{j=0}^{D-1}e\left(\frac{jb}{D}\right)\mathop{\sum\sum}_{\substack{t_2,t_3\\ M_i\leq t_i<M_i+\Delta U_i\\(t_2t_3,Dq)=1}}\sum_{0<|h|\leq H}c(h)e\left(-\frac{a\bar t_2\bar t_3 h}{Dq}\right)e\left(-\frac{aj\bar t_2\bar t_3}{D}\right)\right|\\
&+\left(\frac{x^{1-4\nu}}{Dq}+D^\frac 32q^\frac 12 x^\nu\right),
\end{align*}
where we use an exponential sum to detect the congruence mod $D$.
Define
$$G(H,\mathcal M,\chi)=\frac 1D\sum_{b=1}^D\left|\sum_{j=1}^De\left(\frac{jb}{D}\right)\mathop{\sum\sum}_{\substack{t_2,t_3\\ M_i\leq t_i<M_i+\Delta U_i \\(t_2t_3,Dq)=1}}\sum_{1\leq |h|\leq H}c(h)e\left(-\frac{a\bar t_2\bar t_3 (h+jq)}{Dq}\right)\right|.$$
It then remains to relate $G$ to $\mathcal G$ here.

By partial summation over $h$,
\begin{align*}
G(H,\mathcal M,\chi)\ll  & D|c(H)|\max_{0\leq j\leq D-1}\left|\sum_{1\leq |h|\leq H}\sum_{\substack{M_2<t_2\leq M_2+\Delta U_2\\ M_3<t_3\leq M_3+\Delta U_3 \\ (t_2t_3,Dq)=1}}e\left(-\frac{a\bar t_2\bar t_3 (h+jq)}{Dq}\right)\right|\\
&+D\max_{0\leq j\leq D-1}\int_1^H \max_{0\leq w\leq y}|c'(w)|\left|\sum_{1\leq |h|\leq y}\sum_{\substack{M_2<t_2\leq M_2+\Delta U_2\\ M_3<t_3\leq M_3+\Delta U_3 \\ (t_2t_3,Dq)=1}}e\left(-\frac{a\bar t_2\bar t_3 (h+jq)}{Dq}\right)\right|dy.
\end{align*}
Using the trivial bound of $\left|e\left(\frac{hz}{Dq}\right)\right|\leq 1$, we have $$D|c(H)|\ll \frac{M_1x^{-\nu}}{q}.$$
Meanwhile, $|c'(w)|\ll (Dq)^{-2}\Delta^2 M_1^2$, which means that 
$$D\int_1^H \max_{0\leq w\leq y}|c'(w)|dy\ll DH(Dq)^{-2}\Delta^2 M_1^2\leq \frac{DM_1x^{3\nu}}{q}.$$
The second bound is obviously larger, and hence
$$G(H,\mathcal M,\chi)\ll \frac{DM_1x^{3\nu}}{q}\left(\max_{1\leq y\leq H}\max_{0\leq j\leq D-1}\mathcal G(H,\mathcal M, \chi,y,j)\right).$$
Plugging this back into our bound for $\mathcal T$, we have
\begin{align*}
\left|\mathcal T(\mathcal M)\right|\ll \frac{D M_1x^{3\nu}}{q}\mathcal G(H,\mathcal M, \chi)+\frac{x^{1-4\nu}}{Dq}+D^\frac 32q^\frac 12x^\nu.
\end{align*}
Note that $D\leq \frac{x^\nu}{D^4}$ by assumption.  So we can simplify the first term on the right to be
$$\frac{M_1x^{4\nu}}{D^4q}\mathcal G(H,\mathcal M, \chi).$$
\end{proof}
\begin{lemma}\label{G1}
Let $f_1=1$, and define as before
$$\mathcal G(H,\mathcal M,1)=\max_{y\leq H}\max_{(a,Dq)=1}\left|\sum_{1\leq |h|\leq y}\sum_{\substack{M_2<t_2\leq M_2+\Delta U_2\\ M_3<t_3\leq M_3+\Delta U_3 \\ (t_2t_3,Dq)=1}}e\left(-\frac{a\bar t_2\bar t_3h}{Dq}\right)f_2(t_2)f_3(t_3)\right|.$$
Then
\begin{align*}
\left|\mathcal T(\mathcal M)\right|\ll \frac{M_1x^{4\nu}}{D^5q}\mathcal G(H,\mathcal M,1)+\frac{x^{1-4\nu}}{Dq}+D^\frac 32q^{\frac 12+2\alpha}x^{-2\nu}.
\end{align*}
\end{lemma}
\begin{proof}
The steps are similar.  Here, we have
\begin{align*}
\mathcal T(&\mathcal M)=&\mathop{\sum\sum}_{\substack{t_2,t_3\\ M_i\leq t_i<M_i+\Delta U_i\\ (t_2t_3,Dq)=1}}f_2(t_2)f_3(t_3)\left(\frac{\Delta U_1}{Dq}+\sum_{0<|h|\leq H}c(h)e\left(-\frac{a\bar t_2\bar t_3 h}{Dq}\right)\right)+O\left(\frac{x^{1-4\nu}}{D^2q}+(Dq)^\frac 12\right)\\
\end{align*}
Since one of $f_2$, $f_3$ is $\chi$ and the other is 1, we have
$$\left|\mathop{\sum\sum}_{\substack{t_2,t_3\\ M_i\leq t_i<M_i+\Delta U_i\\ (t_2t_3,Dq)=1}}f_2(t_2)f_3(t_3)\frac{\Delta U_1}{Dq}\right|\ll \frac{M_1M_2\Delta^2}{Dq^{1-\alpha}}\ll D^{\frac 32}q^{\frac 12+2\alpha}\Delta^2$$
since $M_1M_2\leq \frac{x}{M_3}$ and $M_3\geq \frac{x}{D^\frac 52q^{\frac 32+\alpha}}$.

From here, the steps are identical to the previous theorem except that we save a factor of $D$ and no longer have to take the max over $j$ because we have not removed the character and hence do not sum over $b$ or $j$.  Hence the lemma follows.
\end{proof}




For ease of notation, let $R=Dq$. Since
$$\frac{M_1x^{4\nu}}{D^4q}\mathcal G(H,\mathcal M, \chi)=\frac 1R\left(\frac{M_1x^{4\nu}}{D^3}\mathcal G(H,\mathcal M, \chi)\right),$$
these two lemmas then lead us to the following goal:


\begin{theorem}\label{Ggoal} For $f_1=1$ or $\chi$,
$$\frac{M_1}{D^3}\mathcal G(H,\mathcal M,f_1)\ll x^{1-8\nu}.$$
\end{theorem}

The authors of \cite{FI85} use two bounds for the exponential sum over $h$, $t_2$, and $t_3$.  In this paper, we use one of the two bounds, but we replace the other with the results of Shparlinski \cite{Shp}.

The bound of \cite{FI85} that we keep is (2.6) in that paper, which we stated as Theorem \ref{FI:2.6} above.  We plug in here $M=M_3$, $N=M_2$.
\begin{LemFI}
Let $(a,R)=1$.  Then for any $\varepsilon>0$,
\begin{align*}K_\chi(a)\ll & Dx^\varepsilon \left(R^\frac 34 H^{\frac 12}M_3^{\frac 12} + R^\frac 14HM_3 + R^\frac 13H^\frac 23M_2^\frac 23M_3^\frac 13+HM_2^\frac 23M_3^\frac 23+R^{-1}HM_2M_3\right)
\end{align*}
\end{LemFI}
In the case of a fixed $q$, we replace the other \cite{FI85} bound ((2.5) in that paper) with the Shparlinski result, which appeared above as Theorem \ref{Shp:T1}.  In this case, we plug in $M=M_2$, $N=M_3$

\begin{ThSh1} Let $(a,R)=1$.  Then
$$K_\chi(a)\ll \left(HM_2+(HM_2)^\frac 34R^\frac 14\right)\left(M_3^\frac 78R^{-\frac 18}+M_3^\frac 12\right)R^{o(1)}.$$
\end{ThSh1}

\section{The Friedlander-Iwaniec Method for Somewhat Large $M_2$}\label{M2sec}
Recall that in our new terminology, we have
$$H=\frac{DRM_2M_3}{x^{1-5\nu}}\leq \frac{DRx^{5\nu}}{M_1}$$
By the fact that $M_1M_2M_3$ is bounded below, we then have
$$H\geq \frac{DRx^{5\nu}L(1,\chi)}{M_1\log^{10} x}\gg \frac{DRx^{4\nu}}{M_1}.$$
We handle first the case where $M_2>\frac{R^{\frac 52}}{x^{1-24\nu}}$.  In this case, we apply the bound for $K_\chi(a)$ as given in Theorem \ref{FI:2.6}.
\begin{lemma}  Let $R<x^{\frac{17}{32}-\alpha}$.  
If $M_2>\frac{R^{\frac 52}}{x^{1-24\nu}}$ then
$$\frac{M_1}{D^3}\mathcal G(H,\mathcal M,f_1)\ll x^{1-8\nu}.$$
\end{lemma}
\begin{proof}
Applying Theorem \ref{FI:2.6} with $M=M_2$, $N=M_3$, taking $\varepsilon=\nu$, and recalling that $\eta<\frac \nu 5$,
\begin{align*}\frac{M_1}{D^3}\mathcal G(H,\mathcal M,f_1)&\ll  \frac{M_1x^\nu}{D^3}\left(R^\frac 34 H^{\frac 12}M_3^{\frac 12} + R^\frac 14HM_3 + R^\frac 13H^\frac 23M_2^\frac 23M_3^\frac 13+HM_2^\frac 23M_3^\frac 23+R^{-1}HM_2M_3\right)\\
\ll  &\frac{Rx^{6\nu}}{D^2}\left(R^\frac 34 H^{-\frac 12}M_3^{\frac 12} + R^\frac 14M_3 + R^\frac 13H^{-\frac 13}M_2^\frac 23M_3^\frac 13+M_2^\frac 23M_3^\frac 23+R^{-1}M_2M_3\right)\\
\ll  &\frac{x^{6\nu}}{D^2} \left(\frac{R^\frac 54 M_1^{\frac 12}M_3^\frac 12}{D^\frac 12 x^{2\nu}} + R^\frac 54M_3 + Rx^{\frac 13-\nu}M_2^\frac 13+RM_2^\frac 23M_3^\frac 23+M_2M_3\right)
\end{align*}
Note that since $M_3\geq \frac{x}{R^{\frac 32+\alpha}D}$ by Lemma \ref{binarylemma}, we can use our assumed bound for $R$ to find
$$M_2\leq \sqrt{M_1M_2}\leq \sqrt{\frac{x}{M_3}}\leq R^{\frac 34+\frac \alpha 2}D^\frac 12<x^{\frac{51}{128}}D^\frac 12.$$
Moreover, by our assumption on $M_2$, we have
$$M_1M_3\leq \frac{x}{M_2}< \frac{x^{2-24\nu}}{R^\frac 52}.$$
We can plug these two bounds into the above.  We also apply the loose bounds of $M_1\leq x^{\frac 13}$, $M_1M_2\leq x^\frac 23$, and $R<x^{\frac{17}{32}}$ to the other terms:
\begin{align*}
\frac{M_1}{D^3}\mathcal G(H,\mathcal M,f_1)\ll  &\frac{x^{6\nu}}{D^2}\left(\frac{x^{1-12\nu}}{D^\frac 12 x^{2\nu}} + x^\frac{85}{128}x^\frac 13 + x^{\frac{17}{32}}x^{\frac 13-\nu}x^\frac{17}{128}D^{\frac 16}+x^\frac{17}{32}x^\frac 49+x^\frac 23\right)\\
\ll  &x^{1-8\nu}+ x^{\frac{383}{384}+6\nu} + x^{\frac{383}{384}+5\nu} +x^{\frac{281}{288}+6\nu}+x^{\frac 23+6\nu}.
\end{align*}
Note that $\frac{383}{384}+14\nu<\frac{383}{384}+.0014\leq .9988$, and hence all of these exponents are $\leq 1-8\nu$.  The lemma then follows.

\end{proof}

\section{The Shparlinski Method for Fixed $q$}
For the remaining cases of a fixed $q$ with $M_2\leq \frac{R^{\frac 52}}{x^{1-24\nu}}$, we apply the bounds of Shparlinski.  This will result in the following theorem:
\begin{theorem}\label{notonavg}
If $q\leq x^{\frac{30}{59}-12\nu}$ then 
$$\frac{M_1}{D^3}\mathcal G(H,\mathcal M,f_1)\ll x^{1-10\nu}.$$
\end{theorem}

\begin{proof}
Applying Theorem \ref{Shp:T1} to $\mathcal G$, we let $M=M_2$, $N=M_3$, and we replace $R^{o(1)}$ with $x^\eta$:
\begin{align*}\frac{M_1}{D^3}\mathcal G(H,\mathcal M,f_1)&\ll  \frac{M_1}{D^3}\left(HM_2+(HM_2)^\frac 34R^\frac 14\right)\left(M_3^\frac 78R^{-\frac 18}+M_3^\frac 12\right)x^\eta
\end{align*}
Note that $M_3^\frac 78R^{-\frac 18}\geq M_3^\frac 12$ as long as $M_3>R^\frac 13$.  This is always true here, since $R<x^\frac{12}{23}$ implies$$R^\frac 13<x^\frac 4{23}\leq \frac{x}{q^{\frac{19}{12}}}< \mathcal B.$$
Moreover, since
$$HM_2=x^{5\nu}\frac{DRM_2^2M_3}{x}\leq x^{5\nu}\frac{DRM_1M_2M_3}{x}\leq Dx^{5\nu} R,$$
we can see that 
\begin{gather}\label{HM2}
HM_2\leq x^{2\nu}(HM_2)^\frac 34R^\frac 14.
\end{gather}
Hence, we can simplify the inequality for $\mathcal G$ to
\begin{align*}\frac{M_1}{D^3}\mathcal G(H,\mathcal M,f_1)&\ll  \frac{M_1}{D^3}\left(x^{2\nu}(HM_2)^\frac 34R^\frac 14\right)\left(M_3^\frac 78R^{-\frac 18}\right)x^\eta
\end{align*}
Plugging in for $H$, we then have
\begin{align*}\frac{M_1}{D^3}\mathcal G(H,\mathcal M,f_1)&\ll \frac{M_1}{D^3}\left(x^{2\nu}R\left(Dx^{5\nu-1}M_2^2M_3\right)^\frac 34\right)\left(M_3^\frac 78R^{-\frac 18}\right)x^\eta\\
&\ll R^{\frac 78}M_1M_2^\frac 32M_3^\frac{13}8x^{6\nu-\frac 34}\ll x^{\frac 14+6\nu}M_2^\frac 12M_3^\frac{5}{8}R^{\frac 78}\ll x^{\frac 14+6\nu}M_2^\frac 98R^{\frac 78}.
\end{align*}
Since $M_2\leq \frac{R^\frac 52}{x^{1-24\nu}}$ and $R\leq Dx^{\frac{30}{59}-12\nu}$, the above then yields
\begin{gather}\label{GShp}\frac{M_1}{D^3}\mathcal G(H,\mathcal M,f_1)\ll R^{\frac{59}{16}}x^{-\frac 78+33\nu}\leq D^4x^{1-11\nu}\ll x^{1-10\nu},\end{gather}
which is as required.
\end{proof}
This completes the proof of Theorem \ref{Ggoal} for fixed $q$.  The result of this theorem can then be plugged into the results of Lemmas \ref{Gchi} and \ref{G1} to find that
\begin{align*}
\left|\mathcal T(\mathcal M)\right|\ll \frac{x^{1-4\nu}}{Dq}+D^\frac 32q^\frac 12 x^\nu.
\end{align*}
Since we require $Dq<x^{\frac{30}{59}-12\nu}$, we can see that the latter term will be inconsequential.  This completes the proof of Theorem \ref{mainishgoal}.

\section{The Shparlinski Method: Sum for a Range of $q$}\label{ShpQ}
For the sum over an average, we use Theorem \ref{Shp:T2}.  In particular, we take $\kappa=3\nu$ and replace $R^{o(1)}$ with $x^\eta$ for the bound in that theorem, giving us the following.
\begin{ThSh2}
Let $\kappa>0$ be a fixed real number, and let $Q\geq 1$ be sufficiently large.  For all but at most $(DQ)^{1-12\nu+o(1)}$ values of $R\in [DQ,2DQ]$,
\begin{gather*}\max_{(a,R)=1}|K_\chi(a)|\ll \left(HM_2+(HM_2)^\frac 34R^\frac 14\right)\left(M_3R^{-\frac 14}+M_3^\frac 12\right)R^{3\nu}x^{\eta}.
\end{gather*}
\end{ThSh2}
Let $\mathcal A$ denote the $q\in [Q,2Q)$ for which the bound for $K_\chi$ above holds, and let $\mathcal Z$ denote the remaining $q$.  If we take $q\geq \sqrt x$, we see that the number of elements in $\mathcal Z$ is bounded by
$$|\mathcal Z|\ll \frac{DQ^{1+o(1)}}{(DQ)^{12\nu}}\leq \frac{DQ^{1+o(1)}}{x^{6\nu}}.$$
So the size of $\mathcal Z$ is very small relative to $\mathcal A$.  This will allow us to prove the following.
\begin{theorem}\label{onavg}
If $\sqrt x< Q\leq x^{\frac{16}{31}-12\nu}$ for some $\varepsilon>0$ then $$\sum_{\stackrel{q\sim Q}{q\in \mathcal A}}\frac{M_1}{D^3}\left|\mathcal G(H,\mathcal M,f_1)\right|\ll Qx^{1-8\nu}.$$
Consequently, if $Q$ is as above then
\begin{align*}
\sum_{q\sim Q}\left|\mathcal T_{Dq}(\mathcal M)-\frac{1}{\phi(Dq)}\mathcal T_{Dq}^*(\mathcal M)\right|\ll \frac{x^{1-4\nu}}{D}.
\end{align*}
\end{theorem}

\begin{proof}
We begin with $\mathcal A$.  As in (\ref{HM2}), we have that $Dx^{2\nu}(HM_2)^\frac 34R^\frac 14\geq HM_2$, which means that for any $q\in \mathcal A$,
\begin{gather}\label{M3R}\frac{M_1}{D^3}\mathcal G(H,\mathcal M,f_1)\ll \frac{M_1R^{3\nu}x^\eta}{D^3}\left(Dx^{2\nu}(HM_2)^\frac 34R^\frac 14\right)\left(M_3R^{-\frac 14}+M_3^\frac 12\right).\end{gather}
Note that if $M_3\geq \sqrt R$ then $M_3R^{-\frac 14}\geq M_3^\frac 12$, while if $M_3<\sqrt R$ then $M_3R^{-\frac 14}<M_3^\frac 12$.  So we consider each of these two cases.

First, if $M_3\geq \sqrt R$ then
$$\frac{M_1}{D^3}\mathcal G(H,\mathcal M,f_1)\ll D^{-2}M_1x^{2\nu}R^{3\nu}x^{\eta}(HM_2)^\frac 34M_3\ll D^{-2}x^{6\nu}R^{3\nu}x^{\eta}\frac{M_1M_2^\frac 32M_3^\frac 74R^{\frac 34}}{x^\frac 34}\ll D^{-2}x^{\frac 14+8\nu}M_2^\frac 12M_3^\frac 34R^{\frac 34}.$$
Using our bound of  $M_2\leq \frac{R^{\frac 52}}{x^{1-24\nu}}$ and the fact that $M_3\leq M_2$,
$$\frac{M_1}{D^3}\mathcal G(H,\mathcal M,f_1)\ll D^{-2}x^{\frac 14+8\nu}M_2^\frac 54R^{\frac 34}\ll D^{-2}x^{-1+38\nu}R^{\frac{31}{8}}.$$
Taking $R\leq Dx^{\frac{16}{31}-12\nu}$, we can see that the above will be
$$\ll D^2x^{1-\frac{17}{2}\nu}\ll x^{1-8\nu}.$$
Next, we return to (\ref{M3R}) and let $M_3<\sqrt R$:
$$\frac{M_1}{D^3}\mathcal G(H,\mathcal M,f_1)\ll \frac{M_1}{D^2}x^{2\nu}R^{3\nu}(HM_2)^\frac 34R^{\frac 14}M_3^\frac 12\ll x^{8\nu}\frac{M_1M_2^\frac 32M_3^\frac 54}{x^\frac 34}\ll x^{\frac 14+8\nu}RM_2^\frac 12M_3^\frac 14.$$
Again using the bound for $M_2$, and using the fact that $M_3<\sqrt R$, the above is
$$\ll x^{\frac 14+8\nu}\left(\frac{R^{\frac 54}}{x^{\frac 12-12\nu}}\right)R^\frac 18\ll x^{-\frac 14+20\nu}R^{\frac{11}{8}}.$$
Taking $R<x^\frac{16}{31}$ gives that the above is
$$\ll x^{\frac{22}{31}-\frac 14+20\nu}.$$
The sum with $M_3\geq \sqrt R$ is larger, and hence\begin{align*}
\sum_{\substack{q\sim Q \\ q\in \mathcal A}}\frac{M_1}{D^3}\left|\mathcal G(H,\mathcal M,f_1)\right|\ll \sum_{\substack{q\sim Q \\ q\in \mathcal A}}x^{1-8\nu}\ll Qx^{1-8\nu}.
\end{align*}
Hence by Lemmas \ref{Gchi} and \ref{G1},
$$\sum_{\substack{q\sim Q \\ q\in \mathcal A}}\max_{(a,Dq)=1}\left|\mathcal T_{Dq}(\mathcal M)\right|\ll \frac{x^{1-4\nu}}{D}.$$
Finally, if $q\in \mathcal Z$ then we can bound $\mathcal T_{Dq}$ by Shiu's theorem:
$$\left|\mathop{\sum\sum\sum}_{\substack{(t_1,t_2,t_3)\in \mathcal M\\ t_1t_2t_3\equiv a\pmod{Dq}}}f_1(t_1)f_2(t_2)f_3(t_3)\right|\leq \sum_{\substack{n\leq x \\n\equiv a\pmod{Dq}}}\tau_3(n)\ll \frac xR\log^2 x.$$
So
$$\sum_{q\in \mathcal Z}\max_{(a,Dq)=1}\left|\mathcal T_{Dq}(\mathcal M)\right|\ll \frac{xR^{o(1)}\log^2 x}{x^{6\nu}}\ll \frac{x^{1-5\nu}}{D}.$$
Putting $\mathcal A$ and $\mathcal Z$ together,
$$\sum_{q\sim Q}\max_{(a,Dq)=1}\left|\mathcal T_{Dq}(\mathcal M)\right|\ll \frac{x^{1-4\nu}}{D}.$$
This proves the theorem, and it also completes the proof of Theorem \ref{mainishgoal2}.
\end{proof}

\section{The Sum $T_1$}\label{T1sec}

Now, we can finally bound the sum $T_1$ and complete the proof of Theorem \ref{Tsforall}.  We first prove Theorem \ref{terdivsum} with the following theorem.

\begin{theorem} Let $\sqrt x<q<x^{\frac{30}{59}-12\nu}$, and let $(a,Dq)=1$.  Then
$$\sum_{\substack{dm_1\leq x\\ d>D^*\\dm_1\equiv a\pmod{Dq}}}\lambda(d)=\frac{1}{\phi(q)}\sum_{\substack{dm_1\leq x \\ d>D^*\\ (dm_1,Dq)=1}}\lambda(d)+O\left(\frac{xL(1,\chi)}{Dq\log^8 x}+\frac{x}{Dq}L(1,\chi)^2\log^2 x\right).$$
Additionally, let $Q<x^{\frac{16}{31}-12\nu}$.  Then
$$\sum_{q\sim Q}\left|\sum_{\substack{dm_1\leq x \\ d>D^* \\dm_1\equiv a\pmod{Dq}}}\lambda(d)-\frac{1}{\phi(q)}\sum_{\substack{dm_1\leq x\\ d>D^*\\ (dm_1,Dq)=1}}\lambda(d)\right|\ll \frac{xL(1,\chi)}{D\log^8 x}+\frac{x}{D}L(1,\chi)^2\log^2 x.$$
\end{theorem}
\begin{proof}
From Lemmas \ref{easy1}, \ref{easy2}, and \ref{binarylemma} and Theorem \ref{mainishgoal}, we have shown that for fixed $q$,
\begin{align*}
\mathop{\sum\sum}_{\substack{dm_1\leq x \\ d>D^* \\ dm_1\equiv a\pmod{Dq}}}\lambda(d)=&\frac{1}{\phi(Dq)}\mathop{\sum\sum}_{\substack{dm_1\leq x \\ d>D^* \\ (dm_1,Dq)=1}}\lambda(d)+O(E_1)\end{align*}
where
\begin{align*}
E_1=&\frac{x}{Dq^{1+\alpha}}+\frac{x}{Dq}L(1,\chi)^2\log^2x+\frac{x^{1-\frac \rho 2}}{Dq}+\frac{xL(1,\chi)}{Dq\log^8 x}.\end{align*}
Clearly, the largest term in $E_1$ is either the second or fourth term, depending how small $L(1,\chi)$ is.  This proves the first half of the theorem. 

For the second half, we use Theorem \ref{mainishgoal2} instead of Theorem \ref{mainishgoal}.  The bounds, however, are largely the same, as
\begin{align*}
\sum_{q\sim Q}\left|\mathop{\sum\sum}_{\substack{dm_1\leq x \\ d>D^* \\ dm_1\equiv a\pmod{Dq}}}\lambda(d)-\sum_{q\sim Q}\frac{1}{\phi(Dq)}\mathop{\sum\sum}_{\substack{dm_1\leq x \\ d>D^* \\ (dm_1,Dq)=1}}\lambda(d)\right|\ll \sum_{q\sim Q}E_1,\end{align*}
where $E_1$ is the same as above.  Summing over $q\in Q$ then gives the second half of the theorem.
\end{proof}

Using the above, we can bound $T_1$.
\begin{theorem}\label{T1final}    Let $0<\nu<\frac{1}{10,000}$ with $x^{\frac{\nu}{5}}>x^\eta>D$.  If $q\leq \frac{x^{\frac{30}{59}-12\nu}}{D^2}$ then
$$T_1\ll \frac{x}{\phi(q)}L(1,\chi)\log^2 x\log\log x+\frac{x}{q}L(1,\chi)^2\log^3x.$$
If $q\leq \frac{x^{\frac{16}{31}-12\nu}}{D^2}$ then
$$\sum_{q\sim Q}\max_{(a,D)=1}|T_1(a,q)|\ll xL(1,\chi)\log^2 x\log \log x+xL(1,\chi)^2\log^3x.$$
\end{theorem}
\begin{proof}
We begin with the case of fixed $q$.  As usual, we sum over the appropriate classes mod $Dq$ to construct the sum for $a$ mod $q$.  Noting that $m_2\leq D^*=D^{2+\alpha}$, our assumption on $q$ then implies that $q\leq \left(\frac{x}{m_2}\right)^{\frac{30}{59}-12\nu}$.  Hence,
\begin{align*}
\sum_{m_2\leq D^*}&\mathop{\sum\sum}_{\substack{dm_1\leq \frac{x}{m_2} \\ d>D^* \\ dm_1\equiv a\overline{m_2}\pmod{q}}}\lambda(d)\\
=&\sum_{m_2\leq D^*}\sum_{\substack{0\leq a'\leq D-1\\(D,a'q+a)=1}}\mathop{\sum\sum}_{\substack{dm_1\leq \frac{x}{m_2} \\ d>D^* \\ dm_1\equiv a\overline{m_2}\pmod{Dq}}}\lambda(d)\\
=&\sum_{m_2\leq D^*}\sum_{\substack{0\leq a'\leq D-1\\(D,a'q+a)=1}}\left(\frac{1}{\phi(Dq)}\mathop{\sum\sum}_{\substack{dm_1\leq \frac{x}{m_2}\\ d>D^*  \\ (dm_1m_2,Dq)=1}}\lambda(d)+O\left(\frac{xL(1,\chi)}{m_2Dq\log^8 x}+\frac{x}{m_2Dq}L(1,\chi)^2\log^2 x\right)\right)\\
=&\frac{1}{\phi(Dq)}\sum_{m_2\leq D^*}\sum_{\substack{0\leq a'\leq D-1\\(D,a'q+a)=1}}\mathop{\sum\sum}_{\substack{dm_1\leq \frac{x}{m_2} \\ d>D^* \\ (dm_1m_2,Dq)=1}}\lambda(d)+O\left(\frac{xL(1,\chi)}{q\log^7 x}+\frac{x}{q}L(1,\chi)^2\log^3 x\right),
\end{align*}
where the third line comes from the previous lemma.

For the main term, we have
\begin{align*}
&\frac{1}{\phi(Dq)}\sum_{\substack{0\leq a'\leq D-1\\(D,a'q+a)=1}}\sum_{m_2\leq D^*}\mathop{\sum\sum}_{\substack{dm_1\leq \frac{x}{m_2} \\ d>D^* \\ (dm_1m_2,q)=1}}\lambda(d)\\
&\ll \frac{1}{\phi(Dq)}\sum_{\substack{0\leq a'\leq D-1\\(D,a'q+a)=1}}\sum_{m_2\leq D^*}\left(\sum_{D^*<d\leq \sqrt{\frac{x}{m_2}}} \lambda(d)\sum_{m_1\leq \frac{x}{m_2d}}1+\sum_{m_1\leq \sqrt{\frac{x}{m_2}}}\sum_{d\leq \frac{x}{m_1m_2}}\lambda(d)\right)\\
&\ll \frac{1}{\phi(Dq)}\sum_{\substack{0\leq a'\leq D-1\\(D,a'q+a)=1}}\sum_{m_2\leq D^*}\left(\sum_{D^*<d\leq \sqrt{\frac{x}{m_2}}} \frac{x\lambda(d)}{m_2d}+\sum_{m_1\leq \sqrt{\frac{x}{m_2}}}\frac{xL(1,\chi)}{m_1m_2}\right)\\
&\ll \frac{xD}{\phi(Dq)}L(1,\chi)\log^2 x\ll \frac{xD}{\phi(D)\phi(q)}L(1,\chi)\log^2 x\ll \frac{x}{\phi(q)}L(1,\chi)\log^2 x\log\log D.
\end{align*}
This completes the computation of $T_1$ for fixed $q$.

For the sum over $q\sim Q$, the error terms are handled nearly identically.  For the main term, we pick up where we left off above:
\begin{align*}
\sum_{q\sim Q}\frac{1}{\phi(Dq)}&\sum_{\substack{0\leq a'\leq D-1\\(D,a'q+a)=1}}\sum_{m_2\leq D^*}\mathop{\sum\sum}_{\substack{dm_1\leq \frac{x}{m_2} \\ d>D^* \\ (dm_1m_2,q)=1}}\lambda(d)\\
\ll &\sum_{q\sim Q}\frac{x}{\phi(q)}L(1,\chi)\log^2 x\log\log D\ll xL(1,\chi)\log^2 x\log\log D.
\end{align*}
This completes the proof of the theorem.
\end{proof}
\section{Acknowledgements}
I would like to thank Stylianos Sachpazis for pointing out an error in an earlier draft.  I would also like to thank Igor Shparlinski for sending some helpful results about Kloosterman sums.  Thanks also to the anonymous referees for many helpful suggestions and corrections.

\bibliographystyle{line}

\begin{thebibliography}{[HD]}
\normalsize
\bibitem[BC]{BC} S. Bettin and V. Chandee, Trilinear forms with Kloosterman fractions,
Adv. Math., 328 (2018), 1234-1262.
\bibitem[Di]{Di} K. Dickman, On the frequency of numbers containing prime factors
of a certain relative magnitude, Ark. Mat. Astr. Fys. 22 (1930), 1-14.
\bibitem[FI85]{FI85} J. B. Friedlander and H. Iwaniec, Incomplete Kloosterman sums and a divisor problem,
Ann. of Math. (2) 121 (1985), no. 2, 319-350.
\bibitem[FI03]{FI03} J. B. Friedlander and H. Iwaniec, Exceptional characters and prime numbers in arithmetic progressions, Int. Math. Res. Not. 37 (2003), 2033-2050.
\bibitem[FI18]{FI18} J. B. Friedlander and H. Iwaniec. A note on Dirichlet L-functions. Expo. Math., 36(3-4):343-
350, 2018.
\bibitem[FS]{FS} \'{E}. Fouvry and I. E. Shparlinski, On a ternary quadratic form over primes, Acta Arith.,
150(3) (2011), 285-314.
\bibitem[Gr]{Gr} T.H. Gronwall, Sur les s\'{e}ries de Dirichlet correspondant \`{a} des charact\`{e}res complexes, Rendiconti di Palermo, 35 (1913), 145-159.
\bibitem[HB79]{HB79} D. R. Heath-Brown, The fourth power moment of the Riemann Zeta-function, Proc.
London Math. Soc. (3) 38 (1979), 385-422.
\bibitem[HB83]{HB83} D. R. Heath-Brown, Prime twins and Siegel zeros, Proc. London Math. Soc. (3) 47 (1983), no. 2, 193-224.
\bibitem[HB86]{HB86} D.R. Heath–Brown: The divisor function $d_3(n)$ in arithmetic progressions, Acta Arith. 47 (1986), 29-56
\bibitem[Ho]{Ho} C. Hooley, An asymptotic formula in the theory of numbers, Proc. London Math. Soc.
(3) 7 (1957), 396-413.
\bibitem[Ir]{Ir} A.J. Irving, Average bounds for Kloosterman sums over primes, Funct. Approx. Comment. Math. 2 (51) (2014), 221-235.
\bibitem[FKM]{FKM} \'{E}. Fouvry, E. Kowalski, and P. Michel, On the exponent of distribution of the ternary divisor function,
Mathematika 61 (1) (2015), 121-144.
\bibitem[FR]{FR} \'{E}. Fouvry and M. Radziwi{\l}{\l}. Level of distribution of unbalanced convolutions. Ann. Sci. Ec. Norm. Super.
(4), 55(2) (2022), 537-568.
\bibitem[La1]{La} E. Landau, Über die Klassenzahl imaginär-quadratischer Zahlkörper, Nachr. Ges. Wiss. Göttingen (1918), 285-295.
\bibitem[La2]{Land} B. Landreau, Majorations de fonctions arithmetiques en moyenne sur des ensembles de faible
densit\'{e},  S\'{e}min. Th´eor. Nombres Bordeaux 16 (1987–1988), 1-18.
\bibitem[MM]{MaMe} K. Matom\"{a}ki and J. Merikoski. Siegel zeros, twin primes, Goldbach’s conjecture, and primes in short
intervals, Int. Math. Res. Not. 23 (2023), 20337–20384.
\bibitem[Ma]{MaBT} J. Maynard, On the Brun-Titchmarsh theorem, Acta Arith., 157(3) (2013), 249-296.
\bibitem[Se]{Se} A. Selberg, Lectures on Sieves, Collected Papers, vol. II, Springer-Verlag, Berlin, 1991
\bibitem[Sh24]{Sha} P. Sharma, Bilinear sums with GL(2) coefficients and the exponent of distribution of $d_3$, https://arxiv.org/abs/2303.06087.
\bibitem[Sh80]{Sh} P. Shiu, A Brun-Titchmarsh theorem for multiplicative functions, J. Reine Angew. Math. 313
(1980), 161-170.
\bibitem[Sh19]{Shp} I. Shparlinski, Bounds on average values of double incomplete Kloosterman sums, J. Number Theory, 203 (2019), 1-11.
\bibitem[Si]{Si} C. L. Siegel, \:{U}ber die Classenzahl quadratischer Zahlk\:{o}rper, Acta Arith. 1 (1935), 83–86.
\bibitem[Ti]{Ti} E.C. Titchmarsh, A divisor problem, Rendiconti Palermo, 54 (1930), 414-429.
\bibitem[TT]{TT} T. Tao and J. Ter\"{a}v\"{a}inen, The Hardy-Littlewood-Chowla conjecture in the presence of a Siegel zero, Jour. London Math. Soc. 106 (4) (2022), 3317-3378.
\bibitem[Wr]{WrS} T. Wright, Prime tuples and Siegel zeroes, Bull. London Math. Soc, https://doi.org/10.1112/blms.12956 (to appear).
\bibitem[Zh]{ZhS} Y. Zhang, Discrete mean estimates and the Landau-Siegel zero, https://arxiv.org/abs/2211.02515.
\end{thebibliography}

\end{document}